\definecolor{vugold}{RGB}{237,160,0}
\definecolor{wvublue}{RGB}{0,69,156}
\definecolor{ulightgray}{gray}{.9}
\newcommand*{\textoverline}[1]
{$\overline{\hbox{#1}}\m@th$}
\newcommand*{\apost}[1]
{$\hbox{#1}'\m@th$}
\newcommand*{\apostt}[1]
{$\hbox{#1}''\m@th$}
\newcommand*{\simneq}{%
\mathrel{\vcenter{\offinterlineskip
\hbox{$\sim$}\vskip0.15ex\hbox{$=$}\vskip-1.15ex\hbox{\hspace{0.35ex}$\scriptscriptstyle{/}$}}}}
\newcommand*{\rst}{%
\ensuremath{\circ^{\scriptstyle{r}}}}
\newcommand*{\twost}{%
\ensuremath{\circ^{\scriptstyle{2}}}}
\newcommand*{\nst}{%
\ensuremath{\circ^{\scriptstyle{n}}}}
\newcommand{\rstt}{%
\mathrel{\ooalign{$\bigcirc$\cr\kern3pt$\scriptstyle{r}$}}}
\newtheorem{thm}{Theorem}
\newtheorem{thm1}{Theorem}[section]
\newtheorem{remark}[thm1]{Remark}
\newtheorem{lemma1}[thm1]{Lemma}
\newtheorem{cor}[thm1]{Corollary}
\newtheorem{def1}[thm1]{Definition}
\newtheorem{example}{Example}
\newtheorem{notation}[thm1]{Notation}
\title{Some results on injectivity and multistationarity in chemical reaction networks}
\author{Murad Banaji\footnotemark[1] \and
Casian Pantea\footnotemark[2]}
\begin{document}

\maketitle

\renewcommand{\thefootnote}{\fnsymbol{footnote}}

\footnotetext[1]{Middlesex University, Department of Design Engineering and Mathematics: {\tt m.banaji@mdx.ac.uk}.}
\footnotetext[2]{West Virginia University, Department of Mathematics.}
\renewcommand{\thefootnote}{\arabic{footnote}}

\begin{abstract}
The goal of this paper is to gather and develop some necessary and sufficient criteria for injectivity and multistationarity in vector fields associated with a chemical reaction network under a variety of more or less general assumptions on the nature of the network and the reaction rates. The results are primarily linear algebraic or matrix-theoretic, with some graph-theoretic results also mentioned. Several results appear in, or are close to, results in the literature. Here, we emphasise the connections between the results, and where possible, present elementary proofs which rely solely on basic linear algebra and calculus. A number of examples are provided to illustrate the variety of subtly different conclusions which can be reached via different computations. In addition, many of the computations are implemented in a web-based open source platform, allowing the reader to test examples including and beyond those analysed in the paper. 
\end{abstract}
\begin{keywords}
Injectivity; multiple equilibria; chemical reaction networks

\smallskip
\textbf{MSC.} 80A30; 15A15; 37C25
\end{keywords}

\section{Introduction}

In this paper, the term {\bf chemical reaction network (CRN)} will refer to a set of chemical reactions, and also to its description via a system of ordinary differential equations (ODEs). The study of how network structure/topology affects network dynamics, often termed ``chemical reaction network theory'', has a considerable history frequently traced to the pioneering works of Horn and Jackson \cite{hornjackson} and Feinberg \cite{feinberg0}. This area has, however, also seen a recent resurgence of interest; and perhaps the most active strand of recent work involves examining the capacity of CRNs for multiple equilibria. In this context, variants on the following question have been intensively studied: 
\begin{enumerate}
\item[Q1.] Which CRNs forbid multiple equilibria?
\end{enumerate}
In other words, for which CRNs do the vector fields derived from the network forbid more than one equilibrium on some set? Complicating any review of this and related questions is that the set examined may vary, conclusions may be phrased in terms of matrices or graphs associated with the network, and results may be derived under formally similar, but nevertheless subtly different, assumptions. For example, the reactions may be assumed to be occurring in a so-called continuous flow stirred tank reactor (CFSTR) or in a closed chamber; the kinetics may be assumed to be mass action or to belong to some other more general class; the domain examined may be the whole nonnegative orthant, its interior, or individual stoichiometry classes (to be defined later); and so forth. In some cases the question may be not about the possibility of multiple equilibria {\em per se}, but rather of multiple {\em nondegenerate} equilibria (defined later). Closely related to (Q1) is the question: 
\begin{enumerate}
\item[Q2.] Which CRNs are injective?
\end{enumerate}
Namely, when do the vector fields derived from the network necessarily take different values at different locations on some set? In the special case where the value is $0$, (Q2) reduces to (Q1). In other words, where noninjectivity of a vector field amounts to it taking the same value at two distinct locations in its domain, multiple equilibria occur in the case where it takes the particular value $0$ at two distinct locations. That noninjective reaction networks may forbid multiple equilibria under certain assumptions on the reaction rates is shown by example in \cite{craciun1}, and in some of the examples in Section~\ref{secexamples} of this paper. (Q2), like (Q1), becomes precise only once we specify the domain we are examining, the assumptions on the kinetics, etc. Some recent papers which have studied (Q1) and/or (Q2), sometimes alongside other questions, include \cite{craciun, craciun1, CraciunHeltonWilliams,banajiSIAM,banajicraciun,banajicraciun2,Craciun.2010ac,feliuwiufAMC2012,conradiflockerziJMB2012,shinarfeinbergconcord1,shinarfeinbergconcord2,joshishiu,feliuwiufSIADS2013, gnacadja} to cite but a few.

The goal here is to discuss (Q2) and (Q1) and to present known results, developments of existing results, and improvements on existing results. In some cases we point out relationships between results appearing in different papers, where these are obscured by differences in terminology, or minor variations in assumptions. For brevity, the focus is on matrix-theoretic approaches, although graph-theoretic corollaries are touched on at several points. Both general networks and certain special cases are treated in some detail: the latter include so-called simply reversible networks, namely networks of reversible reactions where no chemical species ever occurs on both sides of the same reaction. Similarly, general kinetics, power-law kinetics, and mass action kinetics are treated (defined formally later). One of our main conclusions is that many results in this area can be seen in a common framework -- for example results on CRNs with mass action kinetics often appear with very different proofs to those on CRNs with more general kinetics. In a sense to be made precise, we show that collective nonsingularity of vector fields associated with a CRN and some choice of kinetics is equivalent to injectivity of these vector fields, which in turn is sometimes equivalent to the absence of multiple equilibria. On the other hand collective nonsingularity also has elegant combinatorial characterisations. In the spirit of \cite{gunawardenaCRNT}, we find that the machinery of linear algebra, calculus, and a little convex analysis suffices for many of the results, and often results in shorter and/or more general proofs than previously available. Algorithmic forms of several of the results are implemented in the open-source web-based CRN analysis tool {\tt CoNtRol} \cite{control}, and a variety of examples are presented based on analysis carried out in {\tt CoNtRol}. 

The paper is structured as follows: the next two sections are set in a general context, developing background material from linear algebra and matrix theory (Section 2), and examining injectivity of functions of the kind arising as vector fields in CRNs, but in a non-CRN specific setting (Section 3). In Section 4, the results of the previous sections are applied to CRNs in a wide range of settings (e.g. under different choices of kinetics, for general or fully reversible networks, for open or closed systems, etc.); schematics summarising some key results are given in Figures \ref{fig:implications} and \ref{fig:implications1}. A series of examples illustrating the subtly different conclusions that are allowed by the results, as well as the limitations of our approach, are given in Section 5. Finally, Section 6 contains concluding remarks and discussion of future work. We have left outside the main body of the paper a selection of definitions, results, and proofs, relevant but not central to the development of the theory given here; these are given in appendices A--F. Some of these results are known, and we only present proofs where they are new and/or considerably simpler than previous proofs.

\section{Background material}
Before treating chemical reactions it is helpful to set out some background material from linear algebra and matrix-theory, and some results on the injectivity of functions. This material is developed in this section and the next, and much of it can be skipped by the reader interested primarily in the later applications to CRNs. However, we remark that it is re-usable in contexts which go beyond the study of chemical reaction networks, and hopefully demonstrates the more general point that work on CRNs throws up questions of theoretical and practical interest going beyond the application itself. For example, proofs of the so-called first ``Thomas conjecture'' \cite{thomas81} on multistationarity both inspired some of the material here and can be derived as easy corollaries of some results presented here. We sometimes preview in these sections the application of various lemmas to results on CRNs, although the precise statements may be deferred.

\subsection{Notation and basic definitions} Some basic matrix-related definitions are introduced. In particular, it is conceptually helpful and notationally elegant to express several of the results to come using (multiplicative) compound matrices and Hadamard products. 

\begin{notation}
Given an undetermined natural number $n$, a boldface $\mathbf{n}$ will refer to the set $\{1, \ldots, n\}$. However $\mathbf{1}$ will refer to the vector of ones, with size determined by context. 
\end{notation}

\begin{notation}[Submatrices and minors of a matrix]
Given a matrix $A \in \mathbb{R}^{n \times m}$ and (nonempty) sets $\alpha \subseteq \mathbf{n}$ and $\beta \subseteq \mathbf{m}$, define $A(\alpha|\beta)$ to be the submatrix of $A$ with rows from $\alpha$ and columns from $\beta$. If $|\alpha| = |\beta|$, then $A[\alpha|\beta] \stackrel{\text{\tiny def}}{=} \mathrm{det}(A(\alpha|\beta))$. $A(\alpha)$ is shorthand for $A(\alpha|\alpha)$, and $A[\alpha]$ means the principal minor $A[\alpha|\alpha]$.
\end{notation}

\begin{def1}[Nonnegative orthant in $\mathbb{R}^n$, facets]
\label{deffacet}
Define $\mathbb{R}^n_{\geq 0}$ to be the nonnegative orthant in $\mathbb{R}^n$ with boundary $\partial \mathbb{R}^n_{\geq 0}$ and interior $\mathbb{R}^n_{\gg 0}$. The closed, codimension $1$, faces of $\mathbb{R}^n_{\geq 0}$ are its {\em facets}. $x, y \in \mathbb{R}^n_{\geq 0}$ will be said to {\em share a facet} if there exists $i \in \mathbf{n}$ such that $x_i=y_i=0$. Observe that the line segment joining $x, y \in \mathbb{R}^n_{\geq 0}$ lies entirely in $\partial \mathbb{R}^n_{\geq 0}$ if and only if $x,y$ share a facet. Sometimes it turns out that a function on some $U \subseteq \mathbb{R}^n_{\geq 0}$ can take the same value at two points $x,y \in U$ only if they share a facet (see Theorem~\ref{thminj} later).
\end{def1}

\begin{notation}[Image of a matrix $A$ and $A$-equivalent points in $\mathbb{R}^n_{\geq 0}$]
The image of $A \in \mathbb{R}^{n \times m}$, a linear subspace of $\mathbb{R}^n$, will be denoted $\mathrm{im}\,A$. Given $x,y \in \mathbb{R}^n_{\geq 0}$ we will write $x \sim^A y$ for $x - y \in \mathrm{im}\,A$ and $x \simneq^A\! y$ for $x - y \in \mathrm{im}\,A\backslash\{0\}$. Clearly $\sim^A$ is an equivalence relation on $\mathbb{R}^n_{\geq 0}$. In the context of CRNs, where $\Gamma$ is the ``stoichiometric matrix'' of the system (to be defined later), each equivalence class of $\sim^\Gamma$ in $\mathbb{R}^n_{\geq 0}$ is a polyhedron termed a ``stoichiometry class''. In the study of many questions related to CRNs we restrict attention to these classes.
\end{notation}

\begin{def1}[Nonnegative/positive matrices and vectors]
Given a real matrix or vector $A$, $A \geq 0$ will mean that each entry of $A$ is nonnegative, and $A > 0$ will mean that $A \geq 0$ and $A \neq 0$. $A \gg 0$ will mean that each entry of $A$ is positive. The inequalities $<$, $\leq$ and $\ll$ will also have their natural meanings. Any $A \geq 0$ will be referred to as {\em nonnegative}, while $A \gg 0$ will be referred to as {\em positive}. 
\end{def1}

We next introduce ``compound matrices'' because these greatly simplify the statement of several definitions and results to follow. We only exploit the notational simplicity they allow, and do not apply any of the extensive theoretical machinery associated with compound matrices in the study of linear algebra and differential equations (e.g., \cite{allenbridges,muldowney}).

\begin{def1}[Multiplicative compound matrices]
Given $A \in \mathbb{R}^{n \times m}$ and $k \in \{1, \ldots, \min\{n, m\}\}$, define $A^{(k)}$ as the {\em $k$th multiplicative compound matrix of $A$} (see \cite{muldowney} for example), namely, choosing and fixing some orderings (say, lexicographic) on subsets of $\mathbf{n}$ and $\mathbf{m}$ of size $k$, $A^{(k)}$ is the ${n\choose k} \times {m\choose k}$ matrix of $k \times k$ minors of $A$. 
\end{def1}

\begin{def1}[Hadamard product]
Given $A, B \in \mathbb{R}^{n \times m}$, define $A \circ B \in \mathbb{R}^{n \times m}$ to be the {\em Hadamard product} (or entrywise product) of $A$ and $B$, namely $(A \circ B)_{ij} = A_{ij}B_{ij}$.
\end{def1}

\begin{notation}[$A \rst B$: Hadamard product of compound matrices]
 We introduce the abbreviation $A \rst B$ for $A^{(r)} \circ B^{(r)}$. This notation will be used extensively and is illustrated in the Example in Remark~\ref{excompHad} to follow.
\end{notation}

\begin{notation}[$\mathcal{D}_n$: positive diagonal matrices]
Define $\mathcal{D}_n \subseteq \mathbb{R}^{n \times n}$ to be the $n \times n$ diagonal matrices with positive diagonal entries, namely $A \in \mathcal{D}_n$ if and only if $A_{ii} > 0$ for $i \in \mathbf{n}$ and $A_{ij} = 0$ for $i,j \in \mathbf{n}$, $i \neq j$. Given $A \in \mathbb{R}^{n \times n}$ (resp., $\mathcal{A} \subseteq \mathbb{R}^{n \times n}$), we write $A + \mathcal{D}_n$ for $\{A + D\colon D \in \mathcal{D}_n\}$ (resp., $\mathcal{A} + \mathcal{D}_n$ for $\{A+D\colon A \in \mathcal{A}, D \in \mathcal{D}_n\}$).
\end{notation}

\begin{def1}[$P$-matrix, $P_0$-matrix]
\label{defP0}
$A \in \mathbb{R}^{n \times n}$ is a {\em $P$-matrix} (resp., {\em $P_0$-matrix}) if all its principal minors are positive (resp., nonnegative), namely if diagonal elements of $A^{(k)}$ are all positive (resp., nonnegative) for each $k = 1, \ldots, n$. 
\end{def1}
\begin{remark}[Characterisation of $P_0$-matrices via collective nonsingularity]
\label{remP0}
$P_0$-matrices can also be characterised as follows: $A \in \mathbb{R}^{n \times n}$ is a $P_0$-matrix if and only if $A+\mathcal{D}_n$ consists of nonsingular matrices (see Remark~3.4 in \cite{banajicraciun2}). 
\end{remark}

\begin{lemma1}[The Cauchy-Binet formula] 
\label{defCauchyBinet}
Given $A \in \mathbb{R}^{n \times m}$ and $B \in \mathbb{R}^{m \times n}$, and any nonempty $\alpha \subseteq \mathbf{n}$, $\beta \subseteq \mathbf{m}$ with $|\alpha| = |\beta|$:
\[
(AB)[\alpha|\beta] = \sum_{\substack{\gamma \subseteq \mathbf{m}\\ |\gamma| = |\alpha|}}A[\alpha|\gamma]B[\gamma|\beta]. 
\]
\end{lemma1}
\begin{proof}See \cite{gantmacher}, for example. \hfill
\end{proof} 

In terms of multiplicative compound matrices, the Cauchy-Binet formula is simply $(AB)^{(k)} = A^{(k)}B^{(k)}$ which is immediate from elementary properties of compound matrices.

\begin{def1}[Qualitative class $\mathcal{Q}(A)$]
$A \in \mathbb{R}^{n \times m}$ determines the {\em qualitative class} $\mathcal{Q}(A) \subseteq \mathbb{R}^{n \times m}$ consisting of all matrices or vectors with the same sign pattern as $A$, i.e., $X \in \mathcal{Q}(A)$ if and only if $(A_{ij} > 0) \Rightarrow (X_{ij} > 0)$; $(A_{ij} < 0) \Rightarrow (X_{ij} < 0)$; and $(A_{ij} = 0) \Rightarrow (X_{ij} = 0)$. The closure of $\mathcal{Q}(A)$ will be written $\mathcal{Q}_0(A)$. Given $A,B \in \mathbb{R}^{n \times m}$, we write $\mathcal{Q}(A) - \mathcal{Q}(B)$ for $\{A'-B'\colon\,A' \in \mathcal{Q}(A), B' \in \mathcal{Q}(B)\}$, $[\mathcal{Q}(A)|\mathcal{Q}(B)]$ for $\{[A'|B']\,\colon\,A'\in\mathcal{Q}(A), B'\in\mathcal{Q}(B)\}$, and so forth. If $\mathcal{A}$ is a set of matrices or vectors, we may write $\mathcal{Q}(\mathcal{A})$ for $\cup_{A \in \mathcal{A}}\mathcal{Q}(A)$.
\end{def1}

\begin{def1}[Semiclass $\mathcal{Q}'(A)$]
\label{defQdash}
Given $A \in \mathbb{R}^{n \times m}$, define the {\em semiclass of $A$}, $\mathcal{Q}'(A) \stackrel{\text{\tiny def}}{=} \{D_1AD_2\colon D_1 \in \mathcal{D}_n, D_2\in \mathcal{D}_m\}$.
\end{def1}

\begin{remark}[Qualitative classes, semiclasses, and when they coincide]
\label{remsemiclass}
Note that $\mathcal{Q}'(A) \subseteq \mathcal{Q}(A)$ and it can be shown that $\mathcal{Q}'(A) = \mathcal{Q}(A)$ if and only if the bipartite graph of $A$ (described in Section~\ref{secgraph}) is a forest, i.e., has no cycles \cite{banajirutherfordtreeprod}. For example, if $A$ is a $2 \times 2$ positive matrix with positive determinant, then $\mathcal{Q}(A)$ includes all $2 \times 2$ positive matrices, whereas all matrices in $\mathcal{Q}'(A)$ have positive determinant, demonstrating that $\mathcal{Q}'(A)$ is a proper subset of $\mathcal{Q}(A)$. In fact, the proofs in \cite{banajirutherfordtreeprod} make it clear that when $\mathcal{Q}'(A) \neq \mathcal{Q}(A)$, $\mathcal{Q}'(A)$ is of lower dimension than $\mathcal{Q}(A)$.
\end{remark}

\begin{def1}[Matrix-pattern]
\label{defmatrixpattern}
A {\em matrix-pattern} $\mathcal{A}$ is a set of matrices defined by equalities or inequalities on the entries of each $A \in \mathcal{A}$ taking one of the forms $A_{ij} = 0$, $A_{ij} > 0$ or $A_{ij} < 0$. Some entries may have no defining equality or inequality, and so we may think of each entry as a ``$+$'' (positive), a ``$-$'' (negative), $0$, or a ``?'' (any real number). A qualitative class is the special case of a matrix-pattern where there are no entries of unknown sign. Given a matrix-pattern $\mathcal{A} \subseteq \mathbb{R}^{n \times m}$ and nonempty $\alpha \subseteq \mathbf{n}, \beta \subseteq \mathbf{m}$, the set $\mathcal{A}(\alpha|\beta) = \{A(\alpha|\beta)\,\colon\,A \in \mathcal{A}\}$ is clearly a matrix-pattern.
\end{def1}

\begin{notation}[$A_{\alpha,\beta}$]
\label{notredmat}
Given $A \in \mathbb{R}^{n \times m}$ and (nonempty) sets $\alpha \subseteq \mathbf{n}$ and $\beta \subseteq \mathbf{m}$, define $A_{\alpha,\beta} \in \mathcal{Q}_0(A)$ via $(A_{\alpha,\beta})_{ij} = A_{ij}$ if $i \in \alpha, j \in \beta$; and $(A_{\alpha,\beta})_{ij} = 0$ otherwise, namely $A_{\alpha,\beta}$ is the matrix $A$ with all entries not in $A(\alpha|\beta)$ set to zero.
\end{notation}

\begin{def1}[Sign nonsingular, sign singular]
$A \in \mathbb{R}^{n \times n}$ is {\em sign nonsingular} if every matrix in $\mathcal{Q}(A)$ is nonsingular. It is {\em sign singular} if every matrix in $\mathcal{Q}(A)$ is singular.
\end{def1}

Characterising sign nonsingular matrices has led to a rich combinatorial literature (\cite{Thomassen1,RobertsonSeymourThomas} for example) and the more general question of understanding when properties of a matrix are invariant over a qualitative class has close connections with the study of CRNs.

We will need the following easy fact whose proof is left to the reader. Either all matrices in a (square) matrix-pattern have determinants of the same sign, or all signs are represented by the determinants of the matrix-pattern:
\begin{lemma1}
\label{lemNSnotSNS}
Let $\mathcal{A}$ be a matrix-pattern consisting of square matrices, and containing $A_1, A_2$ such that $\mathrm{sign}(\mathrm{det}\,A_1) \neq \mathrm{sign}(\mathrm{det}\,A_2)$. Then there exists $A_3 \in \mathcal{A}$ such that $\mathrm{sign}(\mathrm{det}\,A_3) \neq \mathrm{sign}(\mathrm{det}\,A_1)$ and $\mathrm{sign}(\mathrm{det}\,A_3) \neq \mathrm{sign}(\mathrm{det}\,A_2)$.
\end{lemma1}

To preview our interest in qualitative classes and semiclasses in the study of CRNs, we find, for example, that for an irreversible CRN with general kinetics, the matrix of partial derivatives of reaction rate functions explores a qualitative class $\mathcal{Q}(A)$, whereas in the case of mass action kinetics, this matrix explores a semiclass $\mathcal{Q}'(A)$. Convexity of $\mathcal{Q}(A)$ means that convex approaches arise very naturally in the study of CRNs with general kinetics; on the other hand the non-convexity of $\mathcal{Q}'(A)$ in general (though see Remark~\ref{remsemiclass}) suggests that these approaches may not work for mass action kinetics. We see, in Theorem~\ref{gen_powlaw} and subsequent related results, that this limitation is to some extent only apparent.

\subsection{The reduced determinant of a matrix product}

Let $A \in \mathbb{R}^{n \times m}$ have rank $r \geq 1$ and let $B \in \mathbb{R}^{m \times n}$. Given any basis for $\mathrm{im}\,A$ we can write down a square matrix describing the action of the product $A B$ on this basis. Different choices of basis lead to similar matrices, and so it makes sense to refer to the determinant of any such matrix as the ``reduced determinant'' of the product and denote this as $\mathrm{det}_A(A B)$ (see also the ``core determinant'' in \cite{helton}). The construction is provided explicitly in Appendix~\ref{appreduced}. Here we list only the important facts:

\begin{enumerate}[align=left,leftmargin=*]
\item $\mathrm{det}_A(A B) = \sum_{|\alpha| = r}(A B)[\alpha]$. In other words, the reduced determinant is the sum of the $r \times r$ principal minors of $A B$. We observe that (i) $\mathrm{det}_A(A B) = \sum_{|\alpha| = |\beta| = r}A [\alpha|\beta]B[\beta|\alpha] = \mathrm{trace}\,(A^{(r)} B^{(r)}) = \sum_{i,j}(A \rst B^t)_{ij}$ using Cauchy-Binet, and (ii) $\mathrm{det}_A(A B) = (-1)^ra_{n-r}$ where $a_k$ is the coefficient of $\lambda^k$ in the characteristic polynomial $\mathrm{det}(\lambda\,I - A B)$. 
\item $\mathrm{det}_A(A B) \neq 0$ if and only if $\mathrm{rank}(A B A) = r$. This is proved as Lemma~\ref{lemrank} in Appendix~\ref{appreduced}. 
\end{enumerate}
The first result is important because, for fixed $A$, the quantity $\sum_{|\alpha| = r}(A B)[\alpha]$ is a polynomial in the entries of $B$; if these entries vary, and we wish to make the claim that $\mathrm{det}_A(A B) \neq 0$ for all allowed $B$, this reduces to an algebraic claim about the nonvanishing of this polynomial on its domain. The second claim is almost obvious given the meaning of $\mathrm{det}_A(A B)$: we expect $\mathrm{det}_A(A B) = 0$ if and only if $\mathrm{im}\,A$ intersects $\mathrm{ker}\,(A B)$ nontrivially which occurs if and only if $\mathrm{rank}(A B A) < r$. We summarise some equivalent ways of regarding the condition $\mathrm{det}_A(A B) \neq 0$, at the heart of many results in this paper, where the equivalences follow straightforwardly from basic linear algebra:
\begin{enumerate}[align=left,leftmargin=*]
\item $\mathrm{rank}(A B A) = \mathrm{rank}\,A$.
\item $\mathrm{im}\,BA \oplus \mathrm{ker}\,A = \mathbb{R}^m$.
\item $\mathrm{im}\,A \oplus \mathrm{ker}\,AB = \mathbb{R}^n$. 
\item $\left.AB\right|_{\mathrm{im}\,A}\colon \mathrm{im}\,A \to \mathrm{im}\,A$ is a homeomorphism.
\item If $0$ is an eigenvalue of $AB$, then it is not ``defective'', namely it has the same algebraic and geometric multiplicity (this follows as $\mathrm{det}_A(A B) = (-1)^ra_{n-r} \neq 0$, and $n-r$ is the dimension of $\mathrm{ker}\,AB$).
\end{enumerate}

If the reader wishes to fix a single meaning for $\mathrm{det}_A\,AB \neq 0$, it is that $AB$ is a nonsingular transformation on $\mathrm{im}\,A$.
\begin{def1}[$A$-nonsingular]
Given $A \in \mathbb{R}^{n \times m}$ and $B \in \mathbb{R}^{m \times n}$, we will say that {\em $B$ is $A$-nonsingular} if $\mathrm{det}_A(A B) \not = 0$ (equivalently, $\mathrm{rank}(A B A) = \mathrm{rank}(A)$). Otherwise {\em $B$ is $A$-singular}. A set $\mathcal{B} \subseteq \mathbb{R}^{m \times n}$ is $A$-nonsingular if each $B \in \mathcal{B}$ is $A$-nonsingular and $A$-singular if each $B \in \mathcal{B}$ is $A$-singular. 
\end{def1}

``Reduced'' Jacobian matrices and reduced determinants are natural objects to consider in the study of systems of ODEs with linear integrals, and CRNs in particular. They appear directly or indirectly in many papers in this area, for example \cite{craciun2,banajidynsys,banajicraciun2,Craciun.2010ac,feliuwiufAMC2012,muellerAll}. They tell us about properties of the linearised system restricted to level sets of the integral. 
\begin{remark}\label{excompHad}The following example illustrates the notion of the reduced determinant of a matrix product, and equivalent ways of computing it. Let
\[
A = \left(\begin{array}{rr}-1 & 0\\1 & -1\\1 & 1\end{array}\right), \,\,\, B = \left(\begin{array}{rrr}-a & b & c\\0 & -d & e\end{array}\right), \,\,\, AB = \left(\begin{array}{rrr}a & -b & -c\\-a & b+d & c-e\\-a & b-d & c+e\end{array}\right)\,.
\]
As $A$ has rank $2$, we can compute $\mathrm{det}_A(A B)$ as a sum of $2 \times 2$ principal minors:
\[
\mathrm{det}_A(A B) = (AB)[\{1,2\}] + (AB)[\{1,3\}] + (AB)[\{2,3\}] = ad + ae + 2(be+cd)\,. 
\]
Alternatively, we also have
\[
A^{(2)} = \left(\begin{array}{r}1\\-1\\2\end{array}\right), \,\,\, B^{(2)} = \left(\begin{array}{ccc}ad & -ae & be+cd\end{array}\right),\,\,\, A \twost B^t = \left(\begin{array}{c}ad\\ ae\\ 2(be+cd)\end{array}\right)\,,
\]
giving, again, $\mathrm{det}_A(A B) = $ sum of entries in $A \twost B^t = ad + ae + 2(be+cd)$. If $a,b,c,d,e > 0$, then $\mathrm{det}_A(A B) >0$, and thus $AB$ acts as an orientation preserving (nonsingular) linear transformation on $\mathrm{im}\,A$.
\end{remark}

\subsection{Graphs associated with matrices and matrix-products}

\label{secgraph}
Graph theoretic approaches to the study of injectivity, and more particularly to injectivity of CRNs, are too extensive to be treated in this paper. However these approaches have a close relationship with the theory described here, both inspiring it, and in some cases deriving from it. We provide some basic definitions in order to be able to state without proof a few graph-theoretic corollaries. We also remark that approaches centred on matrix minors and matrix products as described here lend themselves very naturally to graph-theoretic formulations leaving much to explore in this area.

\begin{def1}[Bipartite graph of a matrix, SR graph of a matrix, DSR graph of a matrix product]
\label{defgraphs}
Given $A \in \mathbb{R}^{n \times m}$ define the {\em bipartite graph of $A$} as follows: $A$ is a graph on $n + m$ vertices, with vertices $\{X_1, \ldots, X_n\} \cup \{Y_1, \ldots, Y_m\}$, and with edge $X_iY_j$ present if and only if $A_{ij} \neq 0$. Edge $X_iY_j$ is given the sign of $A_{ij}$. To get the {\em SR graph of $A$}, $G_A$, as described in \cite{banajicraciun}, edge $X_iY_j$ in the bipartite graph of $A$ is also labelled with the magnitude of $A_{ij}$. Similarly, given $A \in \mathbb{R}^{n \times m}$, $B \in \mathbb{R}^{m \times n}$, associated with the product $AB$, is a bipartite generalised graph $G_{A,B}$ with signed, labelled edges some of which may be directed, termed the {\em directed SR graph} or {\em DSR graph} of $AB$ \cite{banajicraciun2}. The construction is provided in Appendix~\ref{appstar}, but note that if $B$ varies over a qualitative class then $G_{A, B}$ is constant. SR graphs are a special case of DSR graphs.
\end{def1}

\begin{remark}[SR and DSR graphs]
The original construction of the ``species-reaction graph'' for a CRN is given in Craciun and Feinberg \cite{craciun1}. The abstract constructions of SR and DSR graphs above follow Banaji and Craciun \cite{banajicraciun2,banajicraciun}. While these generalised graphs are defined for matrices and matrix products and appear to have no connection with CRNs, they can still naturally be associated with CRNs, as described in Appendix~\ref{appstar}. Examination of their properties plays a part in many results on CRNs, including  results on injectivity and multistationarity \cite{craciun1, banajicraciun2, banajicraciun}, but not restricted to these (see \cite{angelibanajipantea} for results connected with Hopf bifurcation and the possibility of oscillation for example). Drawing and some analysis of the DSR graph of a CRN are automated in {\tt CoNtRol} \cite{control}. 
\end{remark}

\subsection{Compatibility of matrices and related notions}

In the study of injectivity to follow we will frequently be concerned with the determinant, minors, or reduced determinant of a matrix product. In this context we define various important relationships between the sign patterns of compound matrices of a pair of matrices:
\begin{def1}[Compatibility and related notions]
\label{defcompat}
Given a pair of matrices $A, B \in \mathbb{R}^{n \times m}$ and $r \in \{1, \ldots, \min\{n,m\}\}$, $A$ and $B$ will be termed 
\begin{itemize}[align=left,leftmargin=*]
\item {\em $r$-compatible} if $A \rst B \geq 0$;
\item {\em $r$-strongly compatible} if $A \rst B > 0$;
\item {\em $r$-strongly negatively compatible} if $A \rst B < 0$;
\item {\em compatible} if $A \rst B \geq 0$ for each $r = 1, \ldots, \min\{n, m\}$. We abbreviate this as $A \Bumpeq B$.
\end{itemize}
Observe that these relations are not transitive; for example, $A \rst B \geq 0$ and $B \rst C \geq 0$ does not imply that $A \rst C \geq 0$. The notation may be applied to sets of matrices so, for example, if $\mathcal{B}$ is a set of matrices then $A \rst \mathcal{B} > 0$ will mean $A \rst B > 0$ for all $B \in \mathcal{B}$.
\end{def1}

\begin{remark}[Invariance of compatibility notions under row/column reordering]
\label{remreord}
We will frequently use without comment the fact that given $A, B \in \mathbb{R}^{n \times m}$, applying an arbitrary permutation to the rows/columns of $A$, and the same permutation to the rows/columns of $B$ does not alter compatibility relationships such as $A \rst B \geq 0$, $A \rst B > 0$, etc. In other words, if $P_1$ and $P_2$ are permutation matrices of appropriate dimension, then $A \rst B > 0 \Leftrightarrow P_1 AP_2 \rst P_1BP_2 > 0$, and so forth.
\end{remark}

The reader may confirm that if $a,b,c,d, e > 0$, the matrices $A$ and $B^t$ in the example of Remark~\ref{excompHad} are $2$-strongly compatible, so we can write $A \twost B^t > 0$. As they are both also  $1$-compatible and $2$-compatible, they are compatible, namely $A \Bumpeq B^t$. Clearly, if $n \neq m$, then $n$-compatibility does not imply $m$-compatibility: for example, if
\[
A = \left(\begin{array}{rr}-1&-1\\1&0\\0&1\end{array}\right) \quad \mbox{and} \quad B = \left(\begin{array}{rr}-1&0\\1&1\\0&0\end{array}\right), \quad \mbox{then} \quad A \circ B = \left(\begin{array}{rr}1&0\\1&0\\0&0\end{array}\right) > 0,
\]
so $A$ and $B$ are $1$-strongly compatible. But they are $2$-strongly negatively compatible as
\[
A^{(2)} = \left(\begin{array}{r}1\\-1\\1\end{array}\right),\,\,\, B^{(2)} = \left(\begin{array}{r}-1\\0\\0\end{array}\right), \quad \mbox{and} \quad A\twost B = \left(\begin{array}{r}-1\\0\\0\end{array}\right) < 0\,.
\]

The following lemma will prove useful. It provides some elementary consequences of compatibility, and shows how sometimes compatibility of a matrix $A$ with a set of matrices $\mathcal{B}$ is equivalent to compatibility between a new matrix $A'$ and a modified set of matrices $\mathcal{B}'$. Such constructions will allow us to pass easily between claims about sets of irreversible reactions and sets of reactions which are not necessarily irreversible.
\begin{lemma1}
\label{lemrevirrev}
Let $A, B,C,D\in\mathbb{R}^{n \times m}$, and $E,F\in\mathbb{R}^{n \times m'}$. For the first six claims, fix $r \in \{1,\ldots, \min\{n,m\}\}$. For the final claim, fix $r \in \{1,\ldots, \min\{n,m+m'\}\}$. 
\begin{enumerate}[align=left,leftmargin=*]
\item (i) If $A \rst B \geq 0$, then $(AB^t)[\alpha] \geq 0$ for all $\alpha \subseteq \mathbf{n}$ s.t. $|\alpha| = r$.\\ (ii) If $A \rst B \leq 0$, then $(AB^t)[\alpha] \leq 0$ for all $\alpha \subseteq \mathbf{n}$ s.t. $|\alpha| = r$.\\ (iii) If $A \rst B = 0$, then $(AB^t)[\alpha] = 0$ for all $\alpha \subseteq \mathbf{n}$ s.t. $|\alpha| = r$.
\item (i) If $A \rst B > 0$ then $(AB^t)[\alpha] > 0$ for some $\alpha \subseteq \mathbf{n}$ s.t. $|\alpha| = r$. \\(ii) If $A \rst B < 0$ then $(AB^t)[\alpha] < 0$ for some $\alpha \subseteq \mathbf{n}$ s.t. $|\alpha| = r$. 
\item[3.] (i) If $A \rst B \not \geq 0$ then $(AB_1^t)[\alpha] < 0$ for some $B_1 \in \mathcal{Q}'(B)$ and some $\alpha \subseteq \mathbf{n}$ s.t. $|\alpha| = r$.\\ (ii) If $A \rst B \not \leq 0$ then $(AB_1^t)[\alpha] > 0$ for some $B_1 \in \mathcal{Q}'(B)$ and some $\alpha \subseteq \mathbf{n}$ s.t. $|\alpha| = r$.\\ (iii) If $B_1, B_2 \in \mathcal{B} \subseteq \mathbb{R}^{n \times m}$ where $\mathcal{B}$ is path connected, and $A \rst B_1 \not < 0$, $A \rst B_2 \not > 0$, then there exists $B_3 \in \mathcal{B}$ such that $A \rst B_3 \not < 0$ and $A \rst B_3 \not > 0$. 
\item[4.] Each entry of $A \rst (C-D)$ is a sum of entries of $[A|{-A}] \rst [C|D]$. 
\item[5.] (i) $[A|{-A}] \rst [\mathcal{Q}(C)|\mathcal{Q}(D)] \geq 0$ iff $A \rst (\mathcal{Q}(C) - \mathcal{Q}(D)) \geq 0$.\\(ii) $[A|{-A}] \rst [\mathcal{Q}(C)|\mathcal{Q}(D)] \leq 0$ iff $A \rst (\mathcal{Q}(C) - \mathcal{Q}(D)) \leq 0$.
\item[6.] (i) $[A|{-A}] \rst [\mathcal{Q}(C)|\mathcal{Q}(D)] > 0$ iff $A \rst (\mathcal{Q}(C) - \mathcal{Q}(D)) > 0$.\\ (ii) $[A|{-A}] \rst [\mathcal{Q}(C)|\mathcal{Q}(D)] < 0$ iff $A \rst (\mathcal{Q}(C) - \mathcal{Q}(D)) < 0$.
\item[7.] (i) $[A|F|{-A}] \rst [\mathcal{Q}(C)|\mathcal{Q}(E)|\mathcal{Q}(D)] > 0$ iff $[A|F] \rst [\mathcal{Q}(C) - \mathcal{Q}(D)|\mathcal{Q}(E)] > 0$.\\ (ii) $[A|F|{-A}] \rst [\mathcal{Q}(C)|\mathcal{Q}(E)|\mathcal{Q}(D)] \geq 0$ iff $[A|F] \rst [\mathcal{Q}(C) - \mathcal{Q}(D)|\mathcal{Q}(E)] \geq 0$.\\ (iii) $[A|F|{-A}] \rst [\mathcal{Q}(C)|\mathcal{Q}(E)|\mathcal{Q}(D)] < 0$ iff $[A|F] \rst [\mathcal{Q}(C) - \mathcal{Q}(D)|\mathcal{Q}(E)] < 0$.\\ (iv) $[A|F|{-A}] \rst [\mathcal{Q}(C)|\mathcal{Q}(E)|\mathcal{Q}(D)] \leq 0$ iff $[A|F] \rst [\mathcal{Q}(C) - \mathcal{Q}(D)|\mathcal{Q}(E)] \leq 0$.
\end{enumerate}
\end{lemma1}
\begin{proof}
(1), (2), (3): The first two claims are immediate consequences of the Cauchy-Binet formula. (3)(i): suppose that $A[\alpha|\beta]B[\alpha|\beta] < 0$. Observe that $B_{\alpha, \beta}$ (Notation~\ref{notredmat}) lies in the closure of the semiclass $\mathcal{Q}'(B)$, and that $(A(B_{\alpha,\beta})^t)[\alpha] = A[\alpha|\beta]B[\alpha|\beta] < 0$; the claim now follows by choosing any $B_1 \in \mathcal{Q}'(B)$ sufficiently close to $B_{\alpha, \beta}$. 3(ii): In a similar way, if $A \rst B \not \leq 0$ then we find $B_1 \in \mathcal{Q}'(B)$ and $\alpha \subseteq \mathbf{n}$ s.t. $(AB_1^t)[\alpha] > 0$. For (3)(iii), by continuity of determinants, there exists on any path connecting $B_1$ and $B_2$ some $B_3$ such that $A\rst B_3 \not > 0$ and $A \rst B_3 \not < 0$. 
 
For the following three claims, let $\overline A = [A|{-A}]$.

(4) Let $B = C-D$ and $\overline B = [C|D]$. Fix $\alpha \subseteq \mathbf{n}$, $\beta \subseteq \mathbf{m}$ with $|\alpha| = |\beta| = r$ and consider the product $A[\alpha|\beta] B[\alpha|\beta] = \mathrm{det}(A(\alpha|\beta) B^t(\beta|\alpha))$. Clearly
\[
A(\alpha|\beta) B^t(\beta|\alpha) = \overline {A}(\alpha|\beta')\overline{B}^t(\beta'|\alpha),
\]
where $\beta' = (\beta_1, \ldots, \beta_r, \beta_1 + m, \ldots, \beta_r + m)$, and it follows, from the Cauchy-Binet formula applied to the product $\overline {A}(\alpha|\beta')\overline{B}^t(\beta'|\alpha)$, that
\[
A[\alpha|\beta] B[\alpha|\beta] = \sum_{\gamma \subseteq \beta', |\gamma| = r}\overline {A}[\alpha|\gamma]\overline{B}[\alpha|\gamma]\,.
\]
In the following two claims, we prove only part (i); the second part follows similarly.

(5) To see that $[A|{-A}] \rst [\mathcal{Q}(C)|\mathcal{Q}(D)] \geq 0$ implies that $A \rst (\mathcal{Q}(C) - \mathcal{Q}(D)) \geq 0$, take arbitrary $C' \in \mathcal{Q}(C)$ and $D' \in \mathcal{Q}(D)$, and apply (4) to get that $[A|-A] \rst [C'|D'] \geq 0 \Rightarrow A\rst(C'-D') \geq 0$. In the other direction, suppose that  $[A|{-A}] \rst [\mathcal{Q}(C)|\mathcal{Q}(D)] \not \geq 0$. By (3)(i), there exists $\overline{M} =[C'|D'] \in [\mathcal{Q}(C)|\mathcal{Q}(D)]$ and $\alpha \subseteq \mathbf{n}$ such that $(\overline {A}\, \overline{M}^t)[\alpha] < 0$. Setting $M = C'-D'$ gives $\overline {A}\, \overline{M}^t = AM^t$, and so $(A M^t)[\alpha] < 0$, proving, by (1)(i), that $A \rst M \not \geq 0$.

(6) By (5), $[A|{-A}] \rst [\mathcal{Q}(C)|\mathcal{Q}(D)] > 0 \,\Rightarrow\, A \rst (\mathcal{Q}(C) - \mathcal{Q}(D)) \geq 0$; to confirm that the inequality is strict, choose arbitrary $C' \in \mathcal{Q}(C)$ and $D' \in \mathcal{Q}(D)$, and set $M = C'-D'$, $\overline {M} = [C'|D']$. Choose $\alpha, \beta$ such that  $\overline {A}[\alpha|\beta] \,\overline {M}[\alpha|\beta] > 0$ and choose $\beta' \subseteq \{1, \ldots,m\}$ s.t. $|\beta'| = r$, and $\beta \subseteq \beta' \cup \{\beta'_1 + m, \ldots, \beta'_r + m\}$. Now, following the proof of (4), $A[\alpha|\beta']M[\alpha|\beta']$ is a sum of (nonnegative) entries of $\overline A \rst \overline M$ including $\overline {A}[\alpha|\beta] \,\overline {M}[\alpha|\beta]$, and so $A[\alpha|\beta']M[\alpha|\beta'] > 0$. In the other direction, by (5), $A \rst (\mathcal{Q}(C)-\mathcal{Q}(D)) \geq 0 \Rightarrow \overline{A}\rst [\mathcal{Q}(C)|\mathcal{Q}(D)] \geq 0$, and by (4), for any $C' \in \mathcal{Q}(C), D' \in \mathcal{Q}(D)$, $A \rst (C'-D') \neq 0 \Rightarrow \overline{A} \rst [C'|D'] \neq 0$. Together these imply that $A \rst (\mathcal{Q}(C)-\mathcal{Q}(D)) > 0 \Rightarrow \overline{A}\rst [\mathcal{Q}(C)|\mathcal{Q}(D)] > 0$.
(7)(i) and (ii). Let $A' = [A|F]$, $\overline A = [A|F|{-A}]$, and $\overline B = [C|E|D]$. Let $\overline{A}_+ = [A|F|{-A}|{-F}]$, $\overline {B}_{+} = [C|E|D|0]$. The results follows as:
\[
\begin{array}{ccccc}\overline A \rst \mathcal{Q}(\overline B) \geq 0  &  \Leftrightarrow &  \overline {A}_+ \rst \mathcal{Q}(\overline {B}_+) \geq 0 & \Leftrightarrow & A' \rst [\mathcal{Q}(C) - \mathcal{Q}(D)|\mathcal{Q}(E)] \geq 0,\\
\overline A \rst \mathcal{Q}(\overline B) > 0  &  \Leftrightarrow &  \overline {A}_+ \rst \mathcal{Q}(\overline {B}_+) > 0 & \Leftrightarrow & A' \rst [\mathcal{Q}(C) - \mathcal{Q}(D)|\mathcal{Q}(E)] > 0.\end{array}
\]
To see the first equivalence on each line, observe that given any $C' \in \mathcal{Q}(C)$, $D' \in \mathcal{Q}(D)$, and $E' \in \mathcal{Q}(E)$, $[A|F|{-A}|{-F}] \rst [C'|E'|D'|0]$ is simply $[A|F|{-A}] \rst [C'|E'|D']$ with additional zeros. The second equivalences follow from (5) and (6) with $A$ as $[A|F]$, $C$ as $[C|E]$ and $D$ as $[D|0]$. (iii) and (iv) follow similarly. 
\hfill
\end{proof}

\begin{lemma1}[Equivalent formulations of compatibility of two matrices]
\label{lembump}
Let $A, B \in \mathbb{R}^{n \times m}$, and define $\tilde A = [A\,|\,{-I}]$, $\tilde B = [B\,|\,{-I}]$ with $I$ the $n \times n$ identity matrix. Then the following are equivalent: (i) $A \Bumpeq B$, (ii) $\tilde A \nst \tilde B \geq 0$, (iii) $\tilde A \nst \tilde B > 0$, (iv) $\mathrm{det}(\tilde AD \tilde B^t) \geq 0$ for all $D\in \mathcal{D}_{n+m}$ and (v) $\mathrm{det}(\tilde AD \tilde B^t) > 0$ for all $D\in \mathcal{D}_{n+m}$. 
\end{lemma1}
\begin{proof}
Observe that:
\begin{equation}
\label{eqbump}
\tilde A[\mathbf{n}\,|\,\{m\!+\!1, \ldots, m\!+\!n\}]\tilde B[\mathbf{n}\,|\,\{m\!+\!1, \ldots, m\!+\!n\}] =1 > 0,
\end{equation}
and there is a one-to-one correspondence between the remaining products of the form $\tilde A[\mathbf{n}|\beta]\tilde B[\mathbf{n}|\beta]$ (where $\beta \subseteq \{1, \ldots, m+n\}, |\beta|=n$), and the products $A[\alpha'|\beta']B[\alpha'|\beta']$ (where $\alpha' \subseteq \mathbf{n}$, $\beta' \subseteq \mathbf{m}$, $0 \neq |\alpha'|=|\beta'|$). This immediately shows the equivalence of (i), (ii), and (iii). 

The Cauchy-Binet formula gives
\[
\mathrm{det}(\tilde A D \tilde B^t) = \sum_{\substack{\beta \subseteq \{1, \ldots, m+n\}\\ |\beta| = n}}\tilde A[\mathbf{n}|\beta]D[\beta]\tilde B[\mathbf{n}|\beta]
\]
for any $D\in \mathcal{D}_{n+m}$. If $\tilde A \nst \tilde B \geq 0$, then clearly $\mathrm{det}(\tilde A D \tilde B^t) \geq 0$ for all $D\in \mathcal{D}_{n+m}$. Conversely if $\tilde A \nst \tilde B^t \not \geq 0$, then there exists $\beta$ such that $\tilde A[\mathbf{n}|\beta]\tilde B[\mathbf{n}|\beta]<0$. Choosing $D\in \mathcal{D}_{n+m}$ such that $D_{ii} =1$ if $i \in \beta$, and $D_{ii}$ is sufficiently small if $i \not \in \beta$, we can ensure that $\mathrm{det}(\tilde A D \tilde B^t)<0$. This shows the equivalence of (ii) and (iv). 

(iii) implies (v) again follows from the Cauchy-Binet formula, and (v) implies (iv) is trivial. This completes the proof. \hfill
\end{proof}

Lemma~\ref{lembump} tells us that compatibility of two $n \times m$ matrices $A$ and $B$ is equivalent to $n$-strong compatibility of the matrices augmented with ${-I}$, namely $\tilde A$ and $\tilde B$. Further, $A \not \Bumpeq B$, namely $A$ and $B$ fail to be compatible, if and only if there exists $D\in \mathcal{D}_{n+m}$ such that $\mathrm{det}(\tilde AD \tilde B^t) < 0$. For later use we define a condition stronger than $A \not \Bumpeq B$. Unlike $A \not \Bumpeq B$, this next relationship is not symmetric in $A$ and $B$:

\begin{def1}[Strongly incompatible]\label{defstrongincompat}Let $A, B \in \mathbb{R}^{n \times m}$, with $\tilde A = [A|{-I}]$ and $\tilde B = [B|{-I}]$ as in Lemma~\ref{lembump}. $B$ is {\em strongly $A$-incompatible} if there exists $D\in \mathcal{D}_{n+m}$ such that $\mathrm{det}(\tilde AD \tilde B^t) < 0$ and $\tilde AD\mathbf{1} \leq 0$.\end{def1}

The next two results form the basis for several injectivity results in Banaji et al. \cite{banajiSIAM} and below.
\begin{lemma1}
\label{lemP0}
Let $A,B \in \mathbb{R}^{n \times m}$ and $\mathcal{B} \subseteq \mathbb{R}^{n \times m}$, with $\mathcal{B}$ satisfying $\mathcal{B} = \cup_{B \in \mathcal{B}}\mathcal{Q}'(B)$ (namely $\mathcal{B}$ is a union of semiclasses, e.g. a semiclass, a qualitative class or a matrix-pattern). Then (i) $A \Bumpeq B$ implies that $AB^t$ is a $P_0$-matrix; (ii) $A \Bumpeq \mathcal{B}$ if and only if $AB^t$ is a $P_0$-matrix for each $B \in \mathcal{B}$. 
\end{lemma1}
\begin{proof}
(i) $A \Bumpeq B \Rightarrow AB^t$ is a $P_0$-matrix, by the Cauchy-Binet formula (Lemma~\ref{defCauchyBinet}). (ii) The implication to the right is immediate from (i); the implication to the left follows from Lemma~\ref{lemrevirrev}(3)(i).\hfill
\end{proof}

\begin{lemma1}
\label{lemcompat_diag}
Given $A, B \in \mathbb{R}^{n \times m}$ and $r \in \{1, \ldots, \min\{n, m\}\}$:\\
\begin{tabular}{ll} (i) $A\rst B \geq 0 \Leftrightarrow A \rst \mathcal{Q}'(B) \geq 0$ & (ii) $A\rst B \leq 0 \Leftrightarrow A \rst \mathcal{Q}'(B) \leq 0$\\ (iii) $A\rst B > 0 \Leftrightarrow A \rst \mathcal{Q}'(B) > 0$& (iv) $A\rst B < 0 \Leftrightarrow A \rst \mathcal{Q}'(B) < 0$\\ (v) $A\rst B = 0 \Leftrightarrow A \rst \mathcal{Q}'(B) = 0$ & (vi) $A\Bumpeq B \Leftrightarrow A \Bumpeq \mathcal{Q}'(B)$.
\end{tabular}
\end{lemma1}
\begin{proof}
In one direction (to the left) the results are trivial as $B \in \mathcal{Q}'(B)$. In the other direction, the reader can easily confirm from the Cauchy-Binet formula that $(D_1 B D_2)^{(r)} \in \mathcal{Q}(B^{(r)})$ for any $r \in \{1, \ldots, \min\{n, m\}\}$, $D_1 \in \mathcal{D}_n$, and $D_2 \in \mathcal{D}_m$. In other words, pre- and post-multiplication of $B$ by positive diagonal matrices does not change the sign of any minor of $B$. The results then follow immediately from the definition of $A \rst B$. \hfill
\end{proof}

\begin{remark}[Invariance of signs of minors over a qualitative class]
The basis for Lemma~\ref{lemcompat_diag} is that the signs of minors of a matrix remain fixed as we vary within a {\em semiclass}, which can be expressed elegantly as:
\[
(\mathcal{Q}'(B))^{(r)} \subseteq \mathcal{Q}(B^{(r)}),
\]
for any matrix $B$, and so $A \rst \mathcal{Q}'(B) \subseteq \mathcal{Q}(A \rst B)$. As an aside, note that matrices whose minors maintain their signs as we explore a {\em qualitative class} are rather special: $(\mathcal{Q}(B))^{(r)} \subseteq \mathcal{Q}(B^{(r)})$ if and only if each square submatrix of $B$ is either sign nonsingular or sign singular; these are precisely the matrices such that $\mathcal{Q}(B) \Bumpeq \mathcal{Q}(B)$ (i.e., by Lemma~\ref{lemP0}, such that $B_1B_2^t$ is a $P_0$-matrix for each $B_1, B_2 \in \mathcal{Q}(B)$); equivalently those with ``$2$-odd'' bipartite graphs \cite{banajirutherfordtreeprod}, namely those whose SR graphs have no e-cycles (see \cite{banajiJMAA} and Appendix~\ref{appstar}). 
\end{remark}

\subsection{Compatibility and the reduced determinant of a general product}
The lemmas in this section relate the compatibility properties of pairs of matrices, computed by examining signs of their minors, to linear algebraic properties of various associated products. We are particularly interested in making simultaneous claims about sets of matrices, and the emphasis is on a constant first factor and a varying second factor. To preview roughly results to follow, strong compatibility of various matrices related to a CRN, particularly the stoichiometric matrix and the matrix of partial derivatives of the reaction rates, will imply injectivity of associated vector fields.

\begin{lemma1}
\label{lemmain0}
Let $0 \neq A \in \mathbb{R}^{n \times m}$, $\mathcal{B} \subseteq \mathbb{R}^{m \times n}$, and define $r = \mathrm{rank}\,A$. Define the six conditions:
\begin{enumerate}[align=left,leftmargin=*]
\item $A \rst \mathcal{B}^t > 0$ ($A$, $\mathcal{B}^t$ are $r$-strongly compatible).
\label{condrcompat}
\item $A \rst \mathcal{B}^t < 0$ ($A$, $\mathcal{B}^t$ are $r$-strongly negatively compatible).
\label{condrcompatn}
\item $\mathrm{det}_A(A B) > 0$ for each $B \in \mathcal{B}$ ($AB$ has positive reduced determinant).
\label{condredpos}
\item $\mathrm{det}_A(A B) < 0$ for each $B \in \mathcal{B}$ ($AB$ has negative reduced determinant).
\label{condredposn}
\item $\mathrm{rank}(A B A) = r$ for each $B \in \mathcal{B}$ ($\mathcal{B}$ is $A$-nonsingular).
\label{condgenrank}
\item Given any $k \geq 2$, every product of length $k$ of the form $A B_1 A B_2 \cdots$ or $B_1 A B_2 A \cdots$ where $B_i \in \mathcal{B}$, has rank $r$.
\label{condgenrank1}
\end{enumerate}

Then (\ref{condrcompat}) $\Rightarrow$ (\ref{condredpos}) $\Rightarrow$ (\ref{condgenrank}) $\Leftrightarrow$ (\ref{condgenrank1}). Similarly, (\ref{condrcompatn}) $\Rightarrow$ (\ref{condredposn}) $\Rightarrow$ (\ref{condgenrank}). If $\mathcal{B}$ is path connected and a union of semiclasses, then (\ref{condredpos}) $\Rightarrow$ (\ref{condrcompat}), (\ref{condredposn}) $\Rightarrow$ (\ref{condrcompatn}), and (\ref{condgenrank}) $\Rightarrow$ [(\ref{condredpos}) or (\ref{condredposn})].
\end{lemma1}
\begin{proof}
(\ref{condrcompat}) $\Rightarrow$ (\ref{condredpos}). If $A \rst B^t > 0$, then (from above) $\mathrm{det}_A(A B) = \sum_{i,j}(A \rst B^t)_{ij} > 0$. (\ref{condrcompatn}) $\Rightarrow$ (\ref{condredposn}) follows similarly.

(\ref{condredpos}) $\Rightarrow$ (\ref{condrcompat}) if $\mathcal{B} = \cup_{B \in \mathcal{B}}\mathcal{Q}'(B)$. (i) Suppose Condition~\ref{condrcompat} fails in such a way that $A \rst B^t \not \geq 0$ for some $B \in \mathcal{B}$, i.e., $A[\alpha'|\beta']B[\beta'|\alpha'] < 0$ for some $\alpha' \subseteq \mathbf{n}, \beta' \subseteq \mathbf{m}$ with $|\alpha'| = |\beta'| = r$. Then 
\[
\mathrm{det}_A(A B_{\beta',\alpha'}) = \sum_{|\alpha| = |\beta| = r}A[\alpha|\beta]B_{\beta',\alpha'}[\beta|\alpha] = A[\alpha'|\beta']B[\beta'|\alpha'] < 0\,.
\]
$B_{\beta',\alpha'}$ is in the closure of $\mathcal{Q}'(B)$ and by continuity, $\mathrm{det}_A(A B') < 0$ for all $B' \in \mathcal{Q}'(B)$ sufficiently close to $B_{\beta',\alpha'}$. (ii) Suppose instead that there exists $B \in \mathcal{B}$ such that $A \rst B^t  = 0$, i.e., $A[\alpha|\beta]B[\beta|\alpha] = 0$ for all $\alpha \subseteq \mathbf{n}, \beta \subseteq \mathbf{m}$ with $|\alpha| = |\beta| = r$. Then $\mathrm{det}_A(A B) = \sum_{i,j}(A \rst B^t)_{ij} = 0$. (\ref{condredposn}) $\Rightarrow$ (\ref{condrcompatn}) if $\mathcal{B} = \cup_{B \in \mathcal{B}}\mathcal{Q}'(B)$ follows in similar fashion.

(\ref{condredpos}) $\Rightarrow$ (\ref{condgenrank}) and (\ref{condredposn}) $\Rightarrow$ (\ref{condgenrank}) are immediate from Lemma~\ref{lemrank}. 

(\ref{condgenrank}) $\Rightarrow$ [(\ref{condredpos}) or (\ref{condredposn})] if $\mathcal{B}$ is path connected. Observe that if neither of $\mathrm{det}_A(AB)<0$ for all $B \in \mathcal{B}$, nor $\mathrm{det}_A(A B) > 0$ for all $B \in \mathcal{B}$, holds then, since $\mathcal{B}$ is path connected, there must exist $B' \in \mathcal{B}$ such that $\mathrm{det}_A(A B') = 0$. But then $\mathrm{rank}(AB'A) <r$ by Lemma~\ref{lemrank}.

(\ref{condgenrank1}) $\Rightarrow$ (\ref{condgenrank}) is trivial. For (\ref{condgenrank}) $\Rightarrow$ (\ref{condgenrank1}), suppose Condition~\ref{condgenrank} holds. Clearly then $\mathrm{rank}(A B A) = \mathrm{rank}(A B) = \mathrm{rank}(BA) = \mathrm{rank}(A)$ for all $B \in \mathcal{B}$, and so the result is true for all products of length $2$. Moreover, these cases imply that, for all $B \in \mathcal{B}$, $\mathrm{im}\,A \cap \mathrm{ker}\,B = \{0\}$ and $\mathrm{im}(BA) \cap \mathrm{ker}\,A = \{0\}$. Suppose the result holds for all products of length $n$ for some $n \geq 2$. Premultiplying a product $AB_1\cdots$ of length $n$ by some $B \in \mathcal{B}$ cannot decrease the rank of the product as $\mathrm{im}\,A \cap \mathrm{ker}\,B = \{0\}$. Similarly premultiplying a product $B_1A\cdots$ of length $n$ by $A$ cannot decrease the rank of the product as $\mathrm{im}\,(BA) \cap \mathrm{ker}\,A = \{0\}$ for all $B \in \mathcal{B}$. Thus the result holds for all products of length $n + 1$. 
\hfill
\end{proof}
\begin{remark}
\label{remAnonsing}
A consequence of Lemma~\ref{lemmain0} is that given $0 \neq A \in \mathbb{R}^{n \times m}$ with rank $r$, and a matrix-pattern $\mathcal{B} \subseteq \mathbb{R}^{m \times n}$, the condition ``$A \rst \mathcal{B}^t > 0$ or $A \rst \mathcal{B}^t < 0$'' is equivalent to ``$\mathcal{B}$ is $A$-nonsingular''. 
\end{remark}

The next results provide basic conditions guaranteeing that $r$-compatibility of $A\in\mathbb{R}^{n \times m}$ and $\mathcal{B} \subseteq \mathbb{R}^{n \times m}$ implies $r$-strong compatibility of $A$ and $\mathcal{B}$. They will prove useful in understanding the relationship between injectivity of a CRN and its so-called ``fully open extension''.
\begin{lemma1}
\label{compattoconcord}
Let $A \in \mathbb{R}^{n\times m}$ have rank $r$ and let $\mathcal{B} \subseteq \mathbb{R}^{n \times m}$ be a matrix-pattern. Then the following are equivalent:
\begin{enumerate}[align=left,leftmargin=*]
\item $A \rst \mathcal{B} \geq 0$ and $A \rst B_1 > 0$ for some $B_1 \in \mathcal{B}$.
\item $\mathrm{det}_A(AB^t) \geq 0$ for all $B \in \mathcal{B}$ and $\mathrm{det}_A(AB_1) > 0$ for some $B_1 \in \mathcal{B}$.
\item $A \rst \mathcal{B} > 0$.
\item $\mathrm{det}_A(AB^t) > 0$ for all $B \in \mathcal{B}$.
\end{enumerate}
\end{lemma1}
\begin{proof}
Note first that being a matrix-pattern, $\mathcal{B}$ is path connected and a union of semiclasses. (3) $\Leftrightarrow$ (4) is just the statement (1) $\Leftrightarrow$ (3) in Lemma~\ref{lemmain0}. The proof of (1) $\Leftrightarrow$ (2) follows easily in the same fashion. (3) $\Rightarrow$ (1) is trivial. To prove (1) $\Rightarrow$ (3), suppose (1) holds and observe that this implies the existence of $\alpha, \beta$ with $|\alpha|=|\beta| = r$ such that $A[\alpha|\beta]B_1[\alpha|\beta] > 0$. If (3) fails, then there exists $B_2 \in \mathcal{B}$ such that $A[\alpha|\beta]B_2[\alpha|\beta] = 0$. As $\mathcal{B}$ is a matrix-pattern, there exists $B_3 \in \mathcal{B}$ such that $A[\alpha|\beta]B_3[\alpha|\beta] < 0$ (Lemma~\ref{lemNSnotSNS}), namely $A \rst B_3 \not \geq 0$, contradicting the assumption that (1) holds.
\hfill \end{proof}

As an immediate corollary of Lemma~\ref{compattoconcord} we have:
\begin{cor}
\label{corcompattoconcord}
Let $A \in \mathbb{R}^{n\times m}$ have rank $r$ and let $\mathcal{B} \subseteq \mathbb{R}^{n \times m}$ be a matrix-pattern. If $A \Bumpeq \mathcal{B}$ and $A \rst B_1 > 0$ for some $B_1 \in \mathcal{B}$, then $A \rst \mathcal{B} > 0$ (equivalently $\mathrm{det}_A(AB^t) > 0$ for all $B \in \mathcal{B}$).
\end{cor}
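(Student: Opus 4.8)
The plan is to read the corollary off directly from the equivalences of Lemma~\ref{compattoconcord}, with $r = \mathrm{rank}\,A$ fixed throughout. The only substantive observation is that full compatibility $A \Bumpeq \mathcal{B}$ already supplies the $r$-compatibility inequality $A \rst \mathcal{B} \geq 0$ that condition~(1) of that lemma requires; everything else is carried by the lemma itself.

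First I would unpack the hypothesis $A \Bumpeq \mathcal{B}$. By Definition~\ref{defcompat}, $A \Bumpeq B$ means $A^{(s)} \circ B^{(s)} \geq 0$ for every $s \in \{1, \ldots, \min\{n,m\}\}$, and $A \Bumpeq \mathcal{B}$ asserts this for every $B \in \mathcal{B}$. Specialising to the single value $s = r = \mathrm{rank}\,A$ yields $A \rst \mathcal{B} \geq 0$.

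Next I would combine this with the second hypothesis, $A \rst B_1 > 0$ for some $B_1 \in \mathcal{B}$. Together, $A \rst \mathcal{B} \geq 0$ and $A \rst B_1 > 0$ are precisely condition~(1) of Lemma~\ref{compattoconcord}. Since $\mathcal{B}$ is a matrix-pattern, that lemma applies, and the implication $(1) \Rightarrow (3)$ gives $A \rst \mathcal{B} > 0$, while $(1) \Rightarrow (4)$ gives the equivalent reformulation $\mathrm{det}_A(AB^t) > 0$ for all $B \in \mathcal{B}$.

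There is essentially no obstacle here: all the real work is done inside Lemma~\ref{compattoconcord}, and the corollary is merely the remark that $\Bumpeq$ subsumes the weak $r$-compatibility hypothesis needed to invoke it. The single point worth a moment's attention is bookkeeping of the fixed superscript $r = \mathrm{rank}\,A$ in the $\rst$ notation, so that the $r$ of the corollary coincides with the $r$ fixed in the lemma.
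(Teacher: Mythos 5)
Your proof is correct and matches the paper's intent exactly: the paper states this result as an immediate corollary of Lemma~\ref{compattoconcord}, and your argument — specialising $A \Bumpeq \mathcal{B}$ to dimension $r$ to obtain $A \rst \mathcal{B} \geq 0$, then invoking the implications $(1) \Rightarrow (3)$ and $(1) \Rightarrow (4)$ of that lemma — is precisely the intended reasoning.
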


\begin{lemma1}
\label{lemMcompat}
Let $0 \neq A \in \mathbb{R}^{n \times m}, B \in \mathbb{R}^{m\times n}$, and define $r = \mathrm{rank}\,A$. The following are equivalent:
\begin{enumerate}[align=left,leftmargin=*]
\item[(i)] $A \rst B^t > 0$ or $A \rst B^t < 0$,
\item[(ii)] $\mathrm{rank}\,(A D_1 B D_2 A) = r$ for all $D_1 \in \mathcal{D}_m$, $D_2\in \mathcal{D}_n$ ($\mathcal{Q}'(B)$ is $A$-nonsingular).
\end{enumerate}
\end{lemma1}
\begin{proof}
(i) $\Rightarrow$ (ii). By Lemma~\ref{lemcompat_diag} $A \rst B^t > 0 \Leftrightarrow A \rst \mathcal{Q}'(B^t) > 0$, and $A \rst B^t < 0 \Leftrightarrow A \rst \mathcal{Q}'(B^t) < 0$. Thus, by implications (\ref{condrcompat}) $\Rightarrow$ (\ref{condgenrank}) and (\ref{condrcompatn}) $\Rightarrow$ (\ref{condgenrank}) of Lemma~\ref{lemmain0}, $\mathrm{rank}\,(A D_1 B D_2 A) = \mathrm{rank}\,A$.

(ii) $\Rightarrow$ (i). Observe that $\mathcal{Q}'(B)$ is path connected and is trivially a union of semiclasses. By implications (\ref{condgenrank}) $\Rightarrow$ [(\ref{condredpos}) or (\ref{condredposn})] $\Rightarrow$ [(\ref{condrcompat}) or (\ref{condrcompatn})] of Lemma~\ref{lemmain0}, $\mathcal{Q}'(B)$ is $A$-nonsingular implies that $A \rst \mathcal{Q}'(B^t) > 0$ or $A \rst \mathcal{Q}'(B^t) < 0$, and so certainly $A \rst B^t > 0$ or $A \rst B^t < 0$. 
\hfill \end{proof}

The following result illustrates one of the primary uses of the DSR graph: graph theoretic tests for compatibility of matrices can be significantly more efficient than direct approaches.
\begin{lemma1}
\label{lemmaina}
Let $A \in \mathbb{R}^{n \times m}$ and $B \in \mathbb{R}^{m \times n}$. If the DSR graph $G_{A, B}$ satisfies Condition~($*$) in Appendix~\ref{appstar} then $A \Bumpeq B^t$.
\end{lemma1}
\begin{proof}
This is shown in \cite{banajicraciun2}. 
\hfill
\end{proof}

\begin{remark}[Condition~($*$): history and previous results]
\label{remnondegen}
Condition~($*$) is an easily computable condition, described in Appendix~\ref{appstar} and implemented algorithmically in {\tt CoNtRol} \cite{control}. It originated in Craciun and Feinberg \cite{craciun1} where the condition was applied to SR graphs of a CRN, and used to make injectivity claims about CRNs with mass action kinetics. It was then used to make injectivity claims about CRNs with general kinetics in Banaji and Craciun \cite{banajicraciun}, and was further extended to DSR graphs and used to make claims about a very general class of dynamical systems termed ``interaction networks'' (which include, but go beyond, CRNs) in Banaji and Craciun \cite{banajicraciun2}. By Lemma~\ref{lemmaina}, if $\mathcal{B}$ is a matrix-pattern and $G_{A, B}$ satisfies Condition~($*$) for all $B \in \mathcal{B}$, then this implies in particular that $A \Bumpeq \mathcal{B}^t$. Corollary~\ref{corcompattoconcord} states that if we can additionally confirm that $A \rst B^t \neq 0$ for {\bf some} $B \in \mathcal{B}$ (where $r = \mathrm{rank}\,A$), then $A \rst \mathcal{B}^t > 0$. In some situations this is automatic (see e.g., Lemma~\ref{lemmain2} below).
\end{remark}

\subsection{Compatibility and the reduced determinant in the case $\mathcal{B} = \mathcal{Q}(A^t)$}

While, in the previous section, $\mathcal{B}$ is an arbitrary set of matrices, at most assumed to be a matrix-pattern, the following results focus on the important special case $\mathcal{B} = \mathcal{Q}(A^t)$, particularly relevant to the study of certain classes of CRNs, termed simply reversible CRNs below. There exist simple necessary and sufficient conditions for a matrix $A$ to be compatible, or $r$-strongly compatible, with its entire qualitative class $\mathcal{Q}(A)$.
\begin{def1}[SSD, $r$-SSD]
\label{defSSD}
Given $A \in \mathbb{R}^{n \times m}$ and $r \in \{1, \ldots, \min\{n, m\}\}$, $A$ is termed  $r$-{\em SSD} if every $r \times r$ submatrix of $A$ is either singular or sign nonsingular. It is {\em SSD} if all square submatrices of $A$ are either sign nonsingular or singular, i.e., $A$ is $r$-SSD for each allowed $r$. (The acronym {\em SSD} was originally an abbreviation of {\em strongly sign determined} and the concept was introduced in Banaji et al. \cite{banajiSIAM}.)
\end{def1}

\begin{lemma1}
\label{lemmain}
The following conditions on $A \in \mathbb{R}^{n \times m}$ with rank $r>0$ are equivalent:
\begin{enumerate}[align=left,leftmargin=*]
\item $A$ is $r$-SSD. 
\label{condrSSD}
\item $A \rst \mathcal{Q}(A) \geq 0$.
\label{condrcomp}
\item $A\rst \mathcal{Q}(A) > 0$.
\label{condrstrongcomp}
\item $\mathrm{det}_A(A B) > 0$ for each $B \in \mathcal{Q}(A^t)$. 
\label{condredJnonz}
\item $\mathrm{rank}(A B A) = r$ for each $B \in \mathcal{Q}(A^t)$ ($\mathcal{Q}(A^t)$ is $A$-nonsingular).
\label{condrank}
\item Given any $k \geq 2$, every product of length $k$ of the form $A B_1 A B_2 \cdots$ or $B_1 A B_2 A \cdots$ where $B_i \in  \mathcal{Q}(A^t)$, has rank $r$.
\label{condrank1}
\end{enumerate}
\end{lemma1}
\begin{proof}
(\ref{condrSSD}) $\Leftrightarrow$ (\ref{condrcomp}). The implication (\ref{condrSSD}) $\Rightarrow$ (\ref{condrcomp}) is immediate by definition. In the other direction, if $A$ is not $r$-SSD, then there exist $\alpha \subseteq \mathbf{n}, \beta \subseteq \mathbf{m}$ such that $|\alpha| = |\beta| = r$, $A[\alpha|\beta] \neq 0$ but $A(\alpha|\beta)$ is not sign nonsingular.  By Lemma~\ref{lemNSnotSNS}, there exists $\tilde B \in \mathcal{Q}(A)$ such that $A[\alpha|\beta]\tilde{B}[\alpha|\beta] <0$, i.e., $A \rst \tilde{B} \not \geq 0$. 

(\ref{condrcomp}) $\Leftrightarrow$ (\ref{condrstrongcomp}). The implication (\ref{condrstrongcomp}) $\Rightarrow$ (\ref{condrcomp}) is immediate. In the other direction, since all $r \times r$ submatrices of $A$ are either singular or sign nonsingular, but $A$ has rank $r$, there must be a sign nonsingular $r \times r$ submatrix of $A$, say $A(\alpha|\beta)$. So, by definition, $A[\alpha|\beta]B[\alpha|\beta] >0$ for all $B \in \mathcal{Q}(A)$. 

(\ref{condrstrongcomp}) $\Leftrightarrow$ (\ref{condredJnonz}) follows from Lemma~\ref{lemmain0} with $\mathcal{B} = \mathcal{Q}(A^t)$. 

(\ref{condredJnonz}) $\Leftrightarrow$ (\ref{condrank}). Condition~\ref{condredJnonz} implies Condition~\ref{condrank} by Lemma~\ref{lemrank}. In the other direction, suppose $\mathrm{rank}(ABA) = r$ for each $B \in \mathcal{Q}(A^t)$. Choosing $B' = A^t$, it is immediate that $\mathrm{det}_A(AB') = \sum_{|\alpha| = r}(A B')[\alpha] > 0$. As $\mathcal{Q}(A^t)$ is path connected, it now follows from implication (\ref{condgenrank}) $\Rightarrow$ [(\ref{condredpos}) or (\ref{condredposn})] of Lemma~\ref{lemmain0} that $\mathrm{det}_A (AB) > 0$ for each $B \in \mathcal{Q}(A^t)$.

(\ref{condrank}) $\Leftrightarrow$ (\ref{condrank1}) follows from Lemma~\ref{lemmain0}  with $\mathcal{B} = \mathcal{Q}(A^t)$. 
\hfill
\end{proof}

\begin{remark}
\label{remnegSSD}
Observe that given a real matrix $A$ with rank $r$, $A\rst A > 0$, and consequently $A\rst \mathcal{Q}(A) \leq 0$ is impossible.
\end{remark}
\begin{remark}
\label{remAnonsing2}
A consequence of Lemma~\ref{lemmain} is that given $0 \neq A \in \mathbb{R}^{n \times m}$ with rank $r$ the condition ``$A$ is r-SSD'' is equivalent to ``$\mathcal{Q}(A^t)$ is $A$-nonsingular''. 
\end{remark}

\begin{remark}
The condition that $\mathrm{rank}(A B A) = \mathrm{rank}\,A$ for each $B \in \mathcal{Q}(A^t)$ (namely $\mathcal{Q}(A^t)$ is $A$-nonsingular) is a stronger claim than merely that $\mathrm{rank}(A\,B) = \mathrm{rank}\,A$ for all $B \in \mathcal{Q}(A^t)$: consider the matrices
\[
A = \left(\begin{array}{cc}2&1\\1&1\\1&0\end{array}\right), \quad B = \left(\begin{array}{ccc}a&b&e\\c&d&0\end{array}\right)\,\quad \mbox{so that} \quad A B = \left(\begin{array}{ccc}2a+c&2b+d&2e\\a+c&b+d&e\\a&b&e\end{array}\right)\,,
\]
where $a,b,c,d,e > 0$. Then $\mathrm{rank}(A B) = \mathrm{rank}\,A = 2$ for all such $B$ ($A B$ has a nonsingular $2 \times 2$ submatrix). But $A$ is not SSD and $\mathrm{rank}(A BA)$ can equal $1$. In particular, the sum of the $2 \times 2$ principal minors of $A B$ is $ad + ce + de - bc$ which may be zero. 
\end{remark}

\begin{lemma1}
\label{lemmain2}
Define the following conditions on a matrix $A \in \mathbb{R}^{n \times m}$ with rank $r$:
\begin{enumerate}[align=left,leftmargin=*]
\item The SR graph $G_A$ satisfies Condition~($*$) in Appendix~\ref{appstar}.
\label{condstar}
\item $A$ is SSD.
\label{condSSD}
\item $A \Bumpeq \mathcal{Q}(A)$.
\label{condallnonz}
\item $A B$ is a $P_0$-matrix for each $B \in \mathcal{Q}_0(A^t)$.
\label{condP0}
\item $\mathrm{rank}(A B A) = r$ for each $B \in \mathcal{Q}(A^t)$ (i.e., $\mathcal{Q}(A^t)$ is $A$-nonsingular).
\label{condranka}
\end{enumerate}
The following implications hold: (\ref{condstar}) $\Rightarrow$ (\ref{condSSD}) $\Leftrightarrow$ (\ref{condallnonz}) $\Leftrightarrow$  (\ref{condP0}) $\Rightarrow$ (\ref{condranka}).
\end{lemma1}
\begin{proof}
(\ref{condstar}) $\Rightarrow$ (\ref{condSSD}) is proved in \cite{banajicraciun}. (\ref{condSSD}) $\Leftrightarrow$ (\ref{condallnonz}) follows by applying the proof of (\ref{condrSSD}) $\Leftrightarrow$ (\ref{condrcomp}) in Lemma~\ref{lemmain} to each dimension; (\ref{condallnonz}) $\Rightarrow$ (\ref{condranka}) is immediate from Lemma~\ref{lemmain} (the special case $r=0$ is trivial). That (\ref{condallnonz}) implies that $A B$ is a $P_0$-matrix for all $B \in \mathcal{Q}(A^t)$ follows from Lemma~\ref{lemP0}; (\ref{condP0}) then follows by closure of the $P_0$-matrices. On the other hand if (\ref{condallnonz}) is violated and there exist $\alpha \subseteq \mathbf{n}, \beta \subseteq \mathbf{m}$ such that $0 \neq |\alpha| = |\beta|$ and some $B \in \mathcal{Q}(A^t)$ such that $A[\alpha|\beta]B[\beta|\alpha] < 0$, then $B_{\beta,\alpha} \in \mathcal{Q}_0(A^t)$ (Notation~\ref{notredmat}), but $(A B_{\beta,\alpha})[\alpha] = A[\alpha|\beta]B[\beta|\alpha] < 0$. So (\ref{condP0}) is violated. 
\hfill
\end{proof}

\section{Injectivity results}

We recall that a function $f$ with domain $X$ is injective on $X$ if $a, b \in X$, $a \neq b$ implies $f(a) \neq f(b)$. In the study of CRNs, we will be concerned with functions of the form $\Gamma v(x)$ on $\mathbb{R}^n_{\geq 0}$ or $\mathbb{R}^n_{\gg 0}$, where $\Gamma \in \mathbb{R}^{n \times m}$ is the ``stoichiometric matrix'' of the system (to be defined below), and the function $v$ is a vector of reaction rates. We note that the choice to discuss functions of the form $\Gamma v$, namely with a constant first factor, is not really limiting: any vector field with linear integrals can be written in this way (the choice of $\Gamma$ is not in general unique), and in fact any function can be cast in this form by choosing $\Gamma$ to be the identity. That even this latter approach can produce nontrivial results on the injectivity of functions is demonstrated in \cite{banajiJMAA}. We proceed to examine such functions, noting that the discussion at this stage is quite general. 

\subsection{The general case}

Note first that if we state that a function $f$ is $C^1$ (continuously differentiable) on some subset $U \subseteq \mathbb{R}^n$, not necessarily open, we mean that $f$ extends to a $C^1$ function on some open neighbourhood of $U$. Let $\Gamma \in \mathbb{R}^{n \times m}$, $U \subseteq \mathbb{R}^n$, and $v\colon U \to \mathbb{R}^m$. We will examine conditions on $\Gamma$ and $v$ which allow us to make claims termed IC1, \apost{IC1}, \apostt{IC1}, IC2, \apost{IC2}, \apostt{IC2}, and IC1$^{-}$, about the function $\Gamma v\colon U \to \mathbb{R}^n$ (a further claim, termed IC1a, will be discussed later). These claims are all about the possibility of $\Gamma v$ or a related function taking the same value at distinct points and can, roughly speaking, be termed ``injectivity claims''. Claims IC1, IC2, and IC1$^{-}$ will be relevant when $v$ is defined and $C^1$ on $\mathbb{R}^n_{\gg 0}$; \apost{IC1} and \apost{IC2} when $v$ is defined and continuous on $\mathbb{R}^n_{\geq 0}$, and $C^1$ on $\mathbb{R}^n_{\gg 0}$; and \apostt{IC1} and \apostt{IC2} when $v$ is defined and $C^1$ on $\mathbb{R}^n_{\geq 0}$.
\begin{enumerate}[align=left,leftmargin=*]
\item[(\apostt{IC1})] If $x,y \in \mathbb{R}^n_{\geq 0}$, $x\simneq^\Gamma\! y$ and $\Gamma v(x) = \Gamma v(y)$, then $x$ and $y$ share a facet (Definition~\ref{deffacet}) of $\mathbb{R}^n_{\geq 0}$. 
\item[(\apost{IC1})] $x\in \mathbb{R}^n_{\gg 0}$, $y \in \mathbb{R}^n_{\geq 0}$, $x\simneq^\Gamma\! y$ imply $\Gamma v(y) \neq \Gamma v(x)$.
\item[(IC1)] $x,y \in \mathbb{R}^n_{\gg 0}$, $x\simneq^\Gamma\! y$ imply $\Gamma v(y) \neq \Gamma v(x)$. 
\item[(IC1$^{-}$)] $x,y \in \mathbb{R}^n_{\gg 0}$, $x\simneq^\Gamma\! y$, and $\Gamma v(y) = \Gamma v(x)$ imply that either $\mathrm{det}_\Gamma(\Gamma\,Dv(x)) = 0$ or $\mathrm{det}_\Gamma(\Gamma\,Dv(y)) = 0$.
\item[(\apostt{IC2})] If $x,y \in \mathbb{R}^n_{\geq 0}$, $x\neq y$, and $q\colon \mathbb{R}^n_{\geq 0} \to \mathbb{R}^n$ is $C^1$ with $Dq \in \mathcal{D}_n$ on $\mathbb{R}^n_{\geq 0}$, then $\Gamma v(x) - q(x) \neq \Gamma v(y) - q(y)$. 
\item[(\apost{IC2})] If $x\in \mathbb{R}^n_{\gg 0}$, $y \in \mathbb{R}^n_{\geq 0}$, $x \neq y$, and $q\colon \mathbb{R}^n_{\geq 0} \to \mathbb{R}^n$ is continuous, and $C^1$ on $\mathbb{R}^n_{\gg 0}$ with $Dq \in \mathcal{D}_n$ on $\mathbb{R}^n_{\gg 0}$, then $\Gamma v(x) - q(x) \neq \Gamma v(y) - q(y)$.
\item[(IC2)] If $x,y \in \mathbb{R}^n_{\gg 0}$, $x \neq y$, and $q\colon \mathbb{R}^n_{\gg 0} \to \mathbb{R}^n$ is $C^1$ with $Dq \in \mathcal{D}_n$ on $\mathbb{R}^n_{\gg 0}$, then $\Gamma v(x) - q(x) \neq \Gamma v(y) - q(y)$.
\end{enumerate}

\begin{remark}[Motivation for the different injectivity claims]
In the literature on chemical reaction systems, the most commonly used notion when discussing injectivity of CRNs is IC1. Observe that if $\Gamma v(x)$ fails condition IC1, this does not imply that {\em every} coset of $\mathrm{im}\,\Gamma$ intersecting $\mathbb{R}^n_{\gg 0}$ contains $x,y \in \mathbb{R}^n_{\gg 0}$, $x\simneq^\Gamma\! y$ such that $\Gamma v(x) = \Gamma v(y)$, only that this occurs on some coset of $\mathrm{im}\,\Gamma$. IC1, IC2, and \apostt{IC2} are true injectivity claims. \apost{IC1} and \apostt{IC1} are partial extensions of IC1 to the boundary. \apost{IC2} is a partial extension of IC2 to the boundary, while \apostt{IC2} is a complete extension of IC2 to the boundary. The variety of different closely related conditions allows for a range of assumptions on reaction rates, on inflows and outflows (to be defined later), and potentially allows claims about dynamical systems going beyond chemistry. In particular, we leave open the possibilities that $v$ fails to be defined on $\partial\mathbb{R}^n_{\geq 0}$, or is defined but fails to be differentiable on $\partial \mathbb{R}^n_{\geq 0}$, particularly relevant to power-law functions discussed later. After we have developed the appropriate notions, in Section~\ref{secCRNinj} we describe the implications of the different conditions for the possibility of multiple equilibria in a CRN.
\end{remark}

We next list some relationships between the claims. In particular, we note that \apost{IC1} and \apost{IC2} are entirely natural extensions of IC1 and IC2 respectively provided the function $\Gamma v$ is defined and continuous on $\mathbb{R}^n_{\geq 0}$.

\begin{lemma1}[Automatic relationships between the injectivity claims]
\label{lemICrels}
The following implications between claims about a function $f = \Gamma v$ are automatic: \apostt{IC1} $\Rightarrow$ \apost{IC1} $\Rightarrow$ IC1, and \apostt{IC2} $\Rightarrow$ \apost{IC2} $\Rightarrow$ IC2. Provided $f$ is defined and $C^1$ on $\mathbb{R}^n_{\gg 0}$, IC1 $\Rightarrow$ IC1$^{-}$ and IC2 $\Rightarrow$ IC1$^{-}$. Provided $f$ is defined and continuous on $\mathbb{R}^n_{\geq 0}$, IC1 $\Rightarrow$ \apost{IC1} and IC2 $\Rightarrow$ \apost{IC2}. Thus we have the following implications:
\begin{center}
\begin{tikzpicture}[scale=0.7]
\node at (0,1) {\apostt{IC1}};
\node at (1,0.95) {$\Rightarrow$};
\node at (2,1) {\apost{IC1}};
\node at (3,0.95) {$\Leftrightarrow$};
\node at (4,1) {{IC1}};
\node at (0,0) {\apostt{IC2}};
\node at (1,-0.05) {$\Rightarrow$};
\node at (2,0) {\apost{IC2}};
\node at (3,-0.05) {$\Leftrightarrow$};
\node at (4,0) {{IC2}};
\node at (5,0.75)[rotate=-20] {$\Rightarrow$};
\node at (5,0.15)[rotate=20] {$\Rightarrow$};
\node at (6,0.5) {IC1$^{-}$};
\end{tikzpicture}
\end{center}
provided the assumptions on existence and differentiability are fulfilled.
\end{lemma1}
\begin{proof}
\apostt{IC1} $\Rightarrow$ \apost{IC1} $\Rightarrow$ IC1 $\Rightarrow$ IC1$^{-}$ are immediate. \apostt{IC2} $\Rightarrow$ \apost{IC2} $\Rightarrow$ IC2 are immediate; IC2 $\Rightarrow$ IC1$^{-}$ can be proved using arguments involving the invariance of Brouwer degree. See Lemma~B1 in \cite{banajicraciun2} for the details, and \cite{craciun2, soule, parthasarathy} for closely related results. The final two claims, namely that IC1 $\Rightarrow$ \apost{IC1} and IC2 $\Rightarrow$ \apost{IC2} provided $f$ is continuous on $\mathbb{R}^n_{\geq 0}$, follow from Lemma~\ref{lemICclosure} below.
\hfill
\end{proof}

\begin{lemma1}
\label{lemICclosure}
Let $V$ be a vector subspace of $\mathbb{R}^n$, and fix $c \in \mathbb{R}^n$. Let $U$ be a relatively open subset of the affine set $c + V$, with closure $\overline{U}$. If $f\colon \overline U \to V$ is continuous on $\overline U$ and injective on $U$, then $x \in U, y \in \overline U$, $x \neq y$ implies $f(x) \neq f(y)$.
\end{lemma1}
\begin{proof} Suppose there exist $x \in U, y \in \overline U$, $x \neq y$ such that $f(x) = f(y)$. As $f$ is continuous and injective on $U$, by the invariance of domain theorem (see e.g. Propostion~7.4 in \cite{dold}) $f$ maps some open neighbourhood $N$ of $x$ in $U$ homeomorphically onto some open neighbourhood $N'$ of $f(x)$ in $V$. Choose $(y_i) \subseteq U\backslash N$, $y_i \to y$; then continuity of $f$ implies $f(y_i) \to f(y) = f(x)$, and so for sufficiently large $i$, $f(y_i) \in N'$, contradicting injectivity of $f$ on $U$. 
\hfill \end{proof}

\begin{remark}[The trivial case $\Gamma = 0$]
If $\Gamma = 0$, then the claims are all satisfied: \apostt{IC1}, \apost{IC1}, IC1, and IC1$^{-}$ are trivial (since $x\simneq^\Gamma\! y$ is impossible), while \apostt{IC2}, IC2 are easy via the fundamental theorem of calculus. \apost{IC2} follows from IC2 by Lemma~\ref{lemICclosure}. In the results below we assume that $\Gamma \neq 0$. 
\end{remark}

\begin{remark}[IC2 and fully open CRNs]
\label{remCRNoutflows}
IC2 (resp., \apostt{IC2}) can be interpreted as stating that all functions of the form $c + \Gamma v(\cdot) - q(\cdot)$ are injective on $\mathbb{R}^n_{\gg 0}$ (resp., $\mathbb{R}^n_{\geq 0}$), where $c \in \mathbb{R}^n$ is a constant vector and $q$ satisfies the assumptions of the claim. IC2, \apost{IC2} and \apostt{IC2} are of interest in the study of ``fully open'' CRNs (to be defined below), namely for situations where outflows of all species are to be expected (see Craciun and Feinberg \cite{craciun,craciun1} for example). 
\end{remark}

\begin{def1}[Nondegenerate equilibria]
Given $0 \neq \Gamma \in \mathbb{R}^{n \times m}$, $U \subseteq \mathbb{R}^n$, and $v\colon U \to \mathbb{R}^m$ as above, $p \in U$ is termed a {\em nondegenerate equilibrium} of $\Gamma v$ if $\Gamma v(p) = 0$ and $\mathrm{det}_\Gamma(\Gamma\,Dv(p)) \neq 0$. If $\mathrm{det}_\Gamma(\Gamma\,Dv(p)) = 0$, then $p$ is {\em degenerate} \cite[Definition 4]{craciun2}.
\end{def1}

\begin{remark}[IC1$^{-}$]
A consequence of IC1$^{-}$ is that ``$\Gamma v$ forbids multiple positive nondegenerate equilibria'', namely if $x,y \in \mathbb{R}^n_{\gg 0}$, $x\simneq^\Gamma\! y$, and $\Gamma v(y) = \Gamma v(x) = 0$, then at least one of $x,y$ must be degenerate. We will rarely explicitly mention IC1$^{-}$, but the reader should bear in mind that IC2 $\Rightarrow$ IC1$^{-}$ for all functions of the kind treated in this paper.
\end{remark}

\begin{notation}[Closure of a set of matrices]
Given a set of real matrices $\mathcal{V}$, $\overline{\mathcal{V}}$ will refer to the closure of $\mathcal{V}$. 
\end{notation}

\begin{def1}[Stable/strongly stable under path integration]
A set of matrices $\mathcal{V}$ is {\em stable under path integration} if given any continuous $\gamma:[0,1] \to \mathcal{V}$, the integral $\int_0^1\gamma(s)\,\mathrm{d}s \in \mathcal{V}$. $\mathcal{V}$ is {\em strongly stable under path integration} if given any continuous $\gamma:[0,1] \to \overline{\mathcal{V}}$ with $\gamma(c) \in \mathcal{V}$ for some $c \in [0,1]$, then the integral $\int_0^1\gamma(s)\,\mathrm{d}s \in \mathcal{V}$. 
\end{def1}

\begin{remark}[Matrix-patterns are strongly stable under path integration]
\label{remineq}
Any set of (real) matrices $\mathcal{V}$ defined by a set of linear equalities and inequalities on its entries is stable under path integration. For example let $\gamma:[0,1] \to \mathcal{V}$ be continuous and let $\mathrm{\prec}$ be any of $=$, $\leq$, $\geq$, $<$ or $>$. If $A$ is some matrix such that $\mathrm{trace}(A^t\gamma(s)) \prec 0$ for each $s \in [0,1]$ then clearly $\mathrm{trace}(A^t\int_0^1\gamma(s)\,\mathrm{d}s) = \int_0^1\mathrm{trace}(A^t\gamma(s))\,\mathrm{d}s) \prec 0$. By similar reasoning, any set of matrices defined by a set of linear equalities and {\em strict} inequalities, such as a matrix-pattern (Definition~\ref{defmatrixpattern}) for example, is {\em strongly} stable under path integration. 
\end{remark}

For completeness observe that:
\begin{lemma1}
If $\emptyset \neq S\subseteq \mathbb{R}^{m \times n}$ is convex then it is stable under path integration.
\end{lemma1}
\begin{proof}
The result is immediately true for any closed nonempty convex set in $\mathbb{R}^{m \times n}$: such sets are the intersection of their supporting half-spaces (Thm~2.7(ii) in \cite{giaquinta} for example), and stability under path integration then follows from Remark~\ref{remineq}. Now consider arbitrary convex $S \subseteq \mathbb{R}^{m \times n}$ and some continuous $\gamma:[0,1] \to S$. As $\gamma([0,1])$ is compact as the continuous image of a compact set, so is its convex hull $C$, which is again the continuous image of a compact set. Thus we can regard $\gamma$ as a path in the compact convex set $C$, and so $\int_0^1 \gamma(s)\,\mathrm{d} s \in C$. But $C \subseteq S$ and the result follows. 
\hfill
\end{proof}

The proof of the following theorem follows the argument of Gouz\'e \cite{gouze98} where a version of the first Thomas conjecture is proved. The result of \cite{gouze98} can in turn be deduced as a corollary of Theorem~\ref{thminj}.

\begin{thm}
\label{thminj}
Let $0 \neq \Gamma \in \mathbb{R}^{n \times m}$, and let $\mathcal{V} \subseteq \mathbb{R}^{m \times n}$ be such that $\Gamma \rst \mathcal{V}^t > 0$ or $\Gamma \rst \mathcal{V}^t < 0$. Further, let $\mathcal{V}$ be strongly stable under path integration. Let $U\supseteq \mathbb{R}^n_{\gg 0}$ and $v\colon U \to \mathbb{R}^m$. Then
\begin{enumerate}[align=left,leftmargin=*]
\item Given any $x,y \in U$, $x\simneq^\Gamma\! y$, suppose $v$ is defined and $C^1$ on the line segment $[x,y]$ joining $x$ and $y$, with $Dv(p) \in \overline{\mathcal{V}}$ on $[x,y]$, and $Dv(p) \in \mathcal{V}$ for some $p \in [x,y]$. Then $\Gamma v(x) \neq \Gamma v(y)$. 
\item Suppose $v$ is $C^1$ on $\mathbb{R}^n_{\gg 0}$, and $Dv(x) \in \mathcal{V}$ for $x \in \mathbb{R}^n_{\gg 0}$. Then $\Gamma v$ satisfies claim IC1. If $v$ is defined and continuous on $\mathbb{R}^n_{\geq 0}$ and $C^1$ on $\mathbb{R}^n_{\gg 0}$, then $\Gamma v$ satisfies claim \apost{IC1}. If $v$ is defined and $C^1$ on $\mathbb{R}^n_{\geq 0}$, then $\Gamma v$ satisfies claim \apostt{IC1}.
\end{enumerate}
\end{thm}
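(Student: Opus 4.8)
\section*{Proof proposal}

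The plan is to prove Part~1 directly by means of the fundamental theorem of calculus, reducing the injectivity question to the nonvanishing of a reduced determinant, and then to obtain the three claims of Part~2 by restricting Part~1 to suitable line segments. For Part~1, argue by contradiction: assume $x,y \in U$ with $x \simneq^\Gamma\! y$ and $\Gamma v(x) = \Gamma v(y)$. Parametrise the segment by $\gamma(s) = y + s(x-y)$, $s \in [0,1]$, and set $M = \int_0^1 Dv(\gamma(s))\,\mathrm{d}s$. Since $v$ is $C^1$ on $[x,y]$, the chain rule gives $\tfrac{\mathrm{d}}{\mathrm{d}s}\,v(\gamma(s)) = Dv(\gamma(s))(x-y)$, so integrating yields $v(x) - v(y) = M(x-y)$ and hence $\Gamma v(x) - \Gamma v(y) = \Gamma M(x-y)$.

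The crucial step is to show $M \in \mathcal{V}$. The map $s \mapsto Dv(\gamma(s))$ is continuous (because $v$ is $C^1$ on the segment), takes values in $\overline{\mathcal{V}}$ by hypothesis, and equals an element of $\mathcal{V}$ at the parameter value corresponding to $p$. Strong stability of $\mathcal{V}$ under path integration then forces the integral $M$ to lie in $\mathcal{V}$. With $M \in \mathcal{V}$ in hand, the hypothesis $\Gamma \rst \mathcal{V}^t > 0$ (or $\Gamma \rst \mathcal{V}^t < 0$) together with the implication ``$r$-strong compatibility $\Rightarrow$ positive (resp.\ negative) reduced determinant'' in Lemma~\ref{lemmain0} gives $\mathrm{det}_\Gamma(\Gamma M) \neq 0$. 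By the equivalent characterisations of this condition recorded earlier, $\Gamma M$ restricts to a nonsingular linear map of $\mathrm{im}\,\Gamma$ onto itself. Since $x - y \in \mathrm{im}\,\Gamma \backslash \{0\}$, this forces $\Gamma M(x-y) \neq 0$, contradicting $\Gamma v(x) = \Gamma v(y)$. Hence $\Gamma v(x) \neq \Gamma v(y)$, proving Part~1.

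For Part~2, each claim follows by supplying a segment on which the hypotheses of Part~1 hold. For IC1, if $x,y \in \mathbb{R}^n_{\gg 0}$ with $x \simneq^\Gamma\! y$, then $[x,y] \subseteq \mathbb{R}^n_{\gg 0}$ by convexity, so $Dv \in \mathcal{V}$ throughout and Part~1 applies directly. For \apost{IC1}, I would not re-run the segment argument but instead deduce it from IC1: restricting $\Gamma v$ to a coset $c + \mathrm{im}\,\Gamma$, IC1 says $\Gamma v$ is injective on the relatively open set $(c+\mathrm{im}\,\Gamma)\cap\mathbb{R}^n_{\gg 0}$, while continuity on $\mathbb{R}^n_{\geq 0}$ extends it to the relative closure; Lemma~\ref{lemICclosure} then yields injectivity between an interior point and a boundary point, which is exactly \apost{IC1}. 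For \apostt{IC1}, argue the contrapositive: if $x,y \in \mathbb{R}^n_{\geq 0}$ do not share a facet (Definition~\ref{deffacet}), then no coordinate vanishes at both, so for every interior parameter the corresponding point of the segment has all coordinates positive, i.e.\ the open segment lies in $\mathbb{R}^n_{\gg 0}$; thus $Dv \in \mathcal{V}$ on the open segment, $Dv \in \overline{\mathcal{V}}$ at the endpoints by continuity of $Dv$, and Part~1 again gives $\Gamma v(x) \neq \Gamma v(y)$.

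The main obstacle I anticipate is the careful application of strong stability under path integration in Part~1: one must match the integrand's behaviour --- values in $\mathcal{V}$ on the interior of the segment but only in $\overline{\mathcal{V}}$ at the endpoints --- precisely against the definition, which is exactly why the hypotheses separate the condition ``$Dv(p) \in \overline{\mathcal{V}}$ on $[x,y]$'' from ``$Dv(p) \in \mathcal{V}$ for some $p$''. The remaining linear-algebraic step (nonvanishing reduced determinant $\Rightarrow$ $\Gamma M$ injective on $\mathrm{im}\,\Gamma$) is routine once the equivalences preceding Lemma~\ref{lemmain0} are invoked, and the passage from IC1 to its boundary extensions is entirely handled by Lemma~\ref{lemICclosure}.
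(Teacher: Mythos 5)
Your proposal is correct and takes essentially the same route as the paper's proof: the fundamental theorem of calculus along the segment, strong stability under path integration to place the averaged Jacobian $M$ in $\mathcal{V}$, Lemma~\ref{lemmain0} to conclude $\Gamma M$ is nonsingular on $\mathrm{im}\,\Gamma$, then IC1 directly, \apost{IC1} via Lemma~\ref{lemICclosure}, and \apostt{IC1} via the observation that a segment whose endpoints share no facet meets $\mathbb{R}^n_{\gg 0}$. The only differences are cosmetic (contradiction versus direct argument, and phrasing the key nonsingularity step through the reduced-determinant equivalences rather than the rank condition).
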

\begin{proof}
Write $y-x = \Gamma z$. Then by the fundamental theorem of calculus, 
\[
\Gamma v(y) - \Gamma v(x)  =  \Gamma\left[\int_0^1Dv(t y + (1-t) x)\, \mathrm{d}t\right]\Gamma z = \Gamma\tilde V \Gamma z\,,
\]
where the final equality defines $\tilde V$. By the assumptions on $[x,y]$ and $\mathcal{V}$, $\tilde V\in \mathcal{V}$, and hence $\Gamma \rst \tilde{V}^t > 0$ or $\Gamma \rst \tilde{V}^t < 0$. By Lemma~\ref{lemmain0}, $\mathrm{rank}(\Gamma \tilde V \Gamma) = \mathrm{rank}\,\Gamma$. Thus since $\Gamma z \neq 0$, $\Gamma \tilde V \Gamma z \neq 0$, and the first claim follows.

Suppose that $Dv(x) \in \mathcal{V}$ for $x \in \mathbb{R}^n_{\gg 0}$. That $\Gamma v$ satisfies IC1 follows immediately from the first claim. Provided $v$ is additionally defined and continuous on $\mathbb{R}^n_{\geq 0}$, $\Gamma v$ satisfies \apost{IC1} by Lemma~\ref{lemICclosure}. If $v$ is defined and $C^1$ on $\mathbb{R}^n_{\geq 0}$, then $Dv(x) \in \overline{\mathcal{V}}$ for $x \in\mathbb{R}^n_{\geq 0}$ (since $v$ is $C^1$); that $\Gamma v$ satisfies claim \apostt{IC1} follows from the first claim by noting that any line segment in $\mathbb{R}^n_{\geq 0}$ either lies entirely in some facet of $\mathbb{R}^n_{\geq 0}$ or intersects $\mathbb{R}^n_{\gg 0}$ thus containing $p$ such that $Dv(p) \in \mathcal{V}$.
\hfill
\end{proof}

In the important special case where $\mathcal{V} = \mathcal{Q}(\Gamma^t)$ we have:
\begin{lemma1}
\label{leminj1}
Let $0 \neq \Gamma \in \mathbb{R}^{n \times m}$ have rank $r$ and be $r$-SSD. Let $U\supseteq \mathbb{R}^n_{\gg 0}$, and $v\colon U \to \mathbb{R}^m$ be $C^1$ on $\mathbb{R}^n_{\gg 0}$, with $Dv \in \mathcal{Q}(\Gamma^t)$ on $\mathbb{R}^n_{\gg 0}$. Then $\Gamma v$ satisfies claim IC1. If $v$ is defined and continuous on $\mathbb{R}^n_{\geq 0}$, and $C^1$ on $\mathbb{R}^n_{\gg 0}$, then $\Gamma v$ satisfies claim \apost{IC1}. If $v$ is defined and $C^1$ on $\mathbb{R}^n_{\geq 0}$, then $\Gamma v$ satisfies claim \apostt{IC1}.
\end{lemma1}
\begin{proof}
As $\Gamma$ is $r$-SSD, $\Gamma \rst \mathcal{Q}(\Gamma) > 0$ by Lemma~\ref{lemmain}. The result now follows from the second part of Theorem~\ref{thminj} with $\mathcal{V} = \mathcal{Q}(\Gamma^t)$.
\hfill
\end{proof}

\begin{lemma1}
\label{leminj2}
Let $\Gamma \in \mathbb{R}^{n \times m}$, $U\supseteq \mathbb{R}^n_{\gg 0}$, and let $v\colon U \to \mathbb{R}^m$ be $C^1$ on $\mathbb{R}^n_{\gg 0}$. If $-\Gamma Dv(x)$ is a $P_0$-matrix for each $x \in \mathbb{R}^n_{\gg 0}$, then $\Gamma v$ satisfies claim IC2. Additionally, if $v$ is defined and continuous (resp., $C^1$) on $\mathbb{R}^n_{\geq 0}$, then $\Gamma v$ satisfies claim \apost{IC2} (resp., \apostt{IC2}). 
\end{lemma1}
\begin{proof}
The claims follow from the injectivity of functions on rectangular domains with $P$-matrix Jacobians (Gale and Nikaido \cite{gale}). In brief, the conditions of the lemma guarantee that given $q(\cdot)$ as in IC2, the function $-\Gamma v(\cdot) + q(\cdot)$ has $P$-matrix Jacobian matrix on $\mathbb{R}^n_{\gg 0}$ and is hence injective on $\mathbb{R}^n_{\gg 0}$; consequently $\Gamma v(\cdot) - q(\cdot)$ is also injective on $\mathbb{R}^n_{\gg 0}$. See Banaji and Craciun \cite{banajicraciun2} for more details. This fact is also behind the proof by Soul\'e \cite{soule} of a version of the first Thomas conjecture. Lemma~\ref{lemICclosure} ensures that \apost{IC2} is satisfied provided $v$ is defined and continuous on $\mathbb{R}^n_{\geq 0}$. If $v$ is in fact $C^1$ on $\mathbb{R}^n_{\geq 0}$ then $-\Gamma Dv(x)$ is a $P_0$-matrix for each $x \in \mathbb{R}^n_{\geq 0}$ (by continuity of the derivative and closure of the $P_0$-matrices), and with $q(\cdot)$ as in \apostt{IC2}, the function $-\Gamma v(\cdot) + q(\cdot)$ has $P$-matrix Jacobian matrix on $\mathbb{R}^n_{\geq 0}$, ensuring that $\Gamma v$ satisfies claim \apostt{IC2}. 
\hfill
\end{proof}

\subsection{Power-law functions}
\label{secpowlaw}

In Theorem~\ref{thminj}, $v$ was a general $C^1$ function. We now examine a special case, ``power-law functions'', where $Dv$ belongs to a set which is not in general convex and not in general stable under path integration, while nevertheless we are able to make claims about injectivity using techniques similar to those for general kinetics. 
\begin{def1}[Exponential and logarithmic functions]
Define the exponential and logarithmic functions $\exp\colon \mathbb{R}^n \to \mathbb{R}^n_{\gg 0}$ and $\ln\colon \mathbb{R}^n_{\gg 0} \to \mathbb{R}^n$ componentwise in the natural way, i.e., $(\exp\,x)_i = \exp\,x_i$ and $(\ln\,x)_i = \ln\,x_i$ for each $i$. Clearly $\exp$ and $\ln$ are inverse functions and are diagonal, namely $(\exp\,x)_i, (\ln\,x)_i$ depend on $x_i$ only. 
\end{def1}
\begin{notation}[Generalised monomials $x^M$]
Given $M \in \mathbb{R}^{m \times n}$, $x^M$ is a convenient abbreviation for the vector of generalised monomials $w = (w_1, \ldots, w_m)^t$ with $w_j = \prod_{i=1}^n x_i^{M_{ji}}, \,\, (j=1, \ldots, m)$. Note that if we regard $x^M$ as a function on $\mathbb{R}^n_{\gg 0}$, we can write $x^M = \mathrm{exp}(M\ln x)$. 
\end{notation}
\begin{def1}[Power-law function]
\label{defpowlaw}
Given $\Gamma \in \mathbb{R}^{n \times m}$, $M \in \mathbb{R}^{m \times n}$, and $E \in \mathcal{D}_m$, we refer to any function of the form $\Gamma E x^M$ as a {\em power-law function} and to $M$ as the {\em matrix of exponents}. 
\end{def1}

The following technical lemma is useful:
\begin{lemma1}
\label{lemWhitney}
Consider $\Gamma \in \mathbb{R}^{n \times m}$, $E \in \mathcal{D}_m$, and $0 \leq M \in \mathbb{R}^{m \times n}$ such that the nonzero entries of $M$ are all greater than or equal to $1$. Then the function $f = \Gamma E x^M$, with domain $\mathbb{R}^n_{\geq 0}$, can be extended to a $C^1$ function $\bar f:\mathbb R^n\to \mathbb R^n$.
\end{lemma1}
\begin{proof}
The partial derivatives of $f$ can clearly be extended continuously on $\partial \mathbb R_{\geq 0}^n$. The result now follows from a version of the Whitney extension theorem \cite[Theorem 4]{Whitney.1934aa}. 
\hfill \end{proof}

\begin{remark}[Domain/differentiability of power-law functions]
Observe that the power-law functions of Definition~\ref{defpowlaw} are defined on $\mathbb{R}^n_{\gg 0}$ for arbitrary $M$. However, if $M$ is nonnegative, then clearly $\Gamma E x^M$ extends continuously to all of $\mathbb{R}^n_{\geq 0}$. If the nonzero entries of $M$ are greater than or equal to $1$, then $\Gamma E x^M$ can be extended to a $C^1$ function on $\mathbb{R}^n_{\geq 0}$ (Lemma~\ref{lemWhitney}). If $M$ is a nonnegative integer matrix, then $\Gamma E x^M$ is in fact a polynomial function on $\mathbb{R}^n$.
\end{remark}

\begin{remark}[Jacobian matrix of a power-law function on $\mathbb{R}^n_{\gg 0}$]
\label{remPLJac}
By a quick computation, the Jacobian matrix of the power-law function $\Gamma E w(x)$ where $w(x) = \mathrm{exp}(M\ln x)$ (on $\mathbb{R}^n_{\gg 0}$) is $\Gamma D_{Ew} M D_{1/x}$, where $D_{Ew} \in \mathcal{D}_m$ is defined by $(D_{Ew})_{jj} = E_{jj} w_j$ and $D_{1/x} \in \mathcal{D}_n$ is defined by $(D_{1/x})_{jj} = 1/x_j$ (see also Banaji et al. \cite{banajiSIAM}, and Remark~3.1 in Craciun and Feinberg \cite{Craciun.2010ac} for an equivalent formulation). By Lemma~\ref{lemrank}, the reduced determinant $\mathrm{det}_\Gamma(\Gamma D_{Ew} M D_{1/x})$ is nonzero if and only if $\mathrm{rank}(\Gamma D_{Ew} M D_{1/x} \Gamma) = \mathrm{rank}\,\Gamma$. It is easy to see that for fixed $\Gamma$ and $M$,  the set of all possible Jacobian matrices of power-law functions of the form $\Gamma E \mathrm{exp}(M\ln x)$ (obtained by varying $x$ over $\mathbb{R}^n_{\gg 0}$ and $E$ over $\mathcal{D}_m$) is precisely equal to $\{\Gamma V\colon V \in \mathcal{Q}'(M)\}$ (see also \cite{banajiSIAM}).
\end{remark}

\begin{thm}
\label{gen_powlaw}
Let $0 \neq \Gamma \in \mathbb{R}^{n \times m}$ have rank $r$ and $M \in \mathbb{R}^{m \times n}$. The following statements are equivalent:
\begin{enumerate}[align=left,leftmargin=*]
\item[(i)] $\Gamma \rst -M^t > 0$ or $\Gamma \rst -M^t < 0$ 
\item[(ii)] $\mathrm{rank}\,(\Gamma D_1 M D_2 \Gamma) = \mathrm{rank}\,\Gamma$ for all $D_1 \in \mathcal{D}_m$ and $D_2 \in \mathcal{D}_n$ ($\mathcal{Q}'(M)$ is $\Gamma$-nonsingular). 
\item[(iii)] For each $E \in \mathcal{D}_m$ the function $\Gamma E \mathrm{exp}(M\ln x)$ satisfies claim IC1. 
\end{enumerate}
\end{thm}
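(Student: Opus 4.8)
The plan is to prove (i) $\Leftrightarrow$ (ii) by a direct appeal to the matrix theory already developed, and then to prove (ii) $\Leftrightarrow$ (iii) by passing to logarithmic coordinates and using the fundamental theorem of calculus to factorise the difference $f(x)-f(y)$ through a member of $\mathcal{Q}'(M)$.

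For (i) $\Leftrightarrow$ (ii) I would apply Lemma~\ref{lemMcompat} with $A = \Gamma$ and $B = -M$. Its condition (i) then reads $\Gamma \rst (-M^t) > 0$ or $\Gamma \rst (-M^t) < 0$, which is exactly the present (i), while its condition (ii) reads $\mathrm{rank}(\Gamma D_1(-M)D_2\Gamma) = r$ for all $D_1 \in \mathcal{D}_m$, $D_2 \in \mathcal{D}_n$; since an overall change of sign leaves rank unchanged this is the present (ii). (Equivalently one could take $B = M$ and note $\Gamma \rst (-M^t) = (-1)^r(\Gamma \rst M^t)$, so the disjunction ``$>0$ or $<0$'' is unaffected by replacing $M$ by $-M$.)

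The substance of the theorem is (ii) $\Leftrightarrow$ (iii). Fix $E \in \mathcal{D}_m$, write $f = \Gamma E\exp(M\ln x)$, and for $x,y \in \mathbb{R}^n_{\gg 0}$ set $a = \ln x$, $b = \ln y$. Everything is smooth on $\mathbb{R}^n_{\gg 0}$, so applying the fundamental theorem of calculus along the segment from $b$ to $a$ to the maps $t \mapsto \exp\!\big(M(ta+(1-t)b)\big)$ and $t \mapsto \exp\!\big(ta+(1-t)b\big)$ gives
\[
f(x)-f(y) = \Gamma E\hat D\,M(a-b), \qquad x-y = \check D(a-b),
\]
where $\hat D \in \mathcal{D}_m$ and $\check D \in \mathcal{D}_n$ are the positive diagonal matrices $\int_0^1 D_{\exp(M(ta+(1-t)b))}\,\mathrm{d}t$ and $\int_0^1 D_{\exp(ta+(1-t)b)}\,\mathrm{d}t$ (their diagonal entries being componentwise logarithmic means of the entries of $x^M, y^M$ and of $x,y$). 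Eliminating $a-b = \check D^{-1}(x-y)$ yields the key factorisation
\[
f(x)-f(y) = \Gamma\,D_1\,M\,D_2\,(x-y), \qquad D_1 = E\hat D \in \mathcal{D}_m,\quad D_2 = \check D^{-1}\in \mathcal{D}_n .
\]
For (ii) $\Rightarrow$ (iii): if $x \simneq^\Gamma\! y$ then $x-y \in \mathrm{im}\,\Gamma\setminus\{0\}$, and condition (ii) applied to these $D_1,D_2$ gives $\mathrm{rank}(\Gamma D_1 M D_2\Gamma) = r$, equivalently $\mathrm{im}\,\Gamma \cap \ker(\Gamma D_1 M D_2) = \{0\}$ (one of the equivalent formulations of $\mathrm{det}_\Gamma(\Gamma D_1 M D_2)\neq 0$ recorded earlier). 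Hence $x-y$ is not in this kernel and $f(x)-f(y)\neq 0$; as $E$ was arbitrary, claim IC1 holds.

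The direction (iii) $\Rightarrow$ (ii), argued by contraposition, is the step I expect to be the main obstacle, since it requires manufacturing a genuine pair of distinct points with equal image out of a single $\Gamma$-singular member of $\mathcal{Q}'(M)$. If (ii) fails there exist $D_1^* \in \mathcal{D}_m$, $D_2^* \in \mathcal{D}_n$ with $\mathrm{rank}(\Gamma D_1^* M D_2^*\Gamma) < r$, equivalently $\mathrm{im}\,\Gamma \cap \ker(\Gamma D_1^* M D_2^*) \neq \{0\}$; fix a nonzero $v$ in this intersection. The goal is to realise $D_1^*$ and $D_2^*$ exactly as the integral matrices above for a suitable pair $x,y$ and a suitable $E$. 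The crucial point is that this can be done coordinatewise: writing $c_i = 1/(D_2^*)_{ii} > 0$, the scalar system $x_i - y_i = v_i$, $\ln x_i - \ln y_i = v_i/c_i$ is solvable in positive reals --- for $v_i \neq 0$ one takes $x_i/y_i = \exp(v_i/c_i)$ and then $y_i = v_i/(\exp(v_i/c_i)-1) > 0$ (numerator and denominator share the sign of $v_i$), and for $v_i = 0$ one takes $x_i = y_i = c_i$. By construction $x-y = v \in \mathrm{im}\,\Gamma\setminus\{0\}$ and the resulting $\check D$ satisfies $\check D^{-1} = D_2^*$; choosing $E = D_1^*\hat D^{-1} \in \mathcal{D}_m$ (with $\hat D$ now determined by $x,y$) makes $D_1 = E\hat D = D_1^*$, so the factorisation gives $f(x)-f(y) = \Gamma D_1^* M D_2^* v = 0$ with $x \neq y$ and $x \simneq^\Gamma\! y$. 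Thus IC1 fails for this $E$, which contradicts (iii) and completes the proof.
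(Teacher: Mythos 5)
Your proposal is correct and follows essentially the same route as the paper's proof: the equivalence (i) $\Leftrightarrow$ (ii) via Lemma~\ref{lemMcompat}, the key factorisation $f(x)-f(y)=\Gamma D_1 M D_2(x-y)$ with $D_1 \in \mathcal{D}_m$, $D_2 \in \mathcal{D}_n$ (this is exactly the paper's Equation~(\ref{eqDv}), and your $\hat D$, $\check D^{-1}$ coincide entry-by-entry with the paper's $\tilde D$, $\overline D$, being logarithmic means and their reciprocals), and the same explicit construction $y_i = v_i/(\exp(v_i/c_i)-1)$ for (iii) $\Rightarrow$ (ii) with the remaining diagonal factor absorbed into $E$. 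The only cosmetic difference is that you obtain the factorisation by two applications of the fundamental theorem of calculus in logarithmic coordinates, whereas the paper applies it once to $\ln w$ and then uses that $\exp(z)-\mathbf{1}$ lies in the qualitative class of $z$.
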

\begin{proof}
Define $w(x) = \mathrm{exp}(M\ln x)$. Notation is as in Remark~\ref{remPLJac}.

(i) $\Leftrightarrow$ (ii) follows immediately from Lemma~\ref{lemMcompat} since $\mathcal{Q}'(M)$ is $\Gamma$-nonsingular if and only if $\mathcal{Q}'(-M)$ is $\Gamma$-nonsingular. 

To prove (ii) $\Leftrightarrow$ (iii), we first show that given $x, y \in \mathbb{R}^n_{\gg 0}$, there exist $\overline{D} \in \mathcal{D}_n$ and $\tilde D \in \mathcal{D}_m$, dependent on $x$ and $y$, and such that:
\begin{equation}
\label{eqDv}
w(y)-w(x) = \tilde D M\overline{D}(y-x)\,.
\end{equation}
Choose and fix arbitrary  $x,y \in \mathbb{R}^n_{\gg 0}$, and define $\Delta x = y-x$. Note that $x + t\Delta x \in \mathbb{R}^n_{\gg 0}$ for $t \in [0,1]$ by convexity of $\mathbb{R}^n_{\gg 0}$. Since $\ln(w(x)) = M\ln x$, the Jacobian matrix of $\ln(w(x))$ is $MD_{1/x}$. By the fundamental theorem of calculus:
\[
\ln(w(y)) = \ln(w(x)) + \int_{0}^1 MD_{1/(x + t\Delta x)}\Delta x\,\mathrm{d}t = \ln(w(x)) + M\overline{D}\Delta x\,,
\]
where $\overline{D} \stackrel{\text{\tiny def}}{=} \int_{0}^1 D_{1/(x + t\Delta x)}\,\mathrm{d}t \in \mathcal{D}_n$. Consequently:
\begin{eqnarray*}
w(y) - w(x) & = & \exp(\ln(w(y))) - w(x)\\
& = & \exp(\ln(w(x)) + M\overline{D}\Delta x) - w(x)\\
& = & w(x) \circ (\exp(M\overline{D}\Delta x) - \mathbf{1})\,,
\end{eqnarray*}
where $\mathbf{1} \in \mathbb{R}^m$ is a vector of ones. As $w(x)$ is positive, and $\exp(M\overline{D}\Delta x) - \mathbf{1}$ is in the qualitative class of $M\overline{D}\Delta x$, we can define $\tilde D \in \mathcal{D}_m$ via $w(y)-w(x) = \tilde D M\overline{D}\Delta x$. 

(ii) $\Rightarrow$ (iii). Suppose there exist $x, y \in \mathbb{R}^n_{\gg 0}$, $x \simneq^\Gamma\! y$, and $E \in \mathcal{D}_m$ such that $\Gamma E w(y) = \Gamma E w(x)$. Defining $\overline{D}$ and $\tilde D$ as above and applying (\ref{eqDv}) gives
\[
0 = \Gamma E (w(y)-w(x)) = \Gamma E \tilde D M\overline{D}(y-x)\,.
\]
Then defining $D_1 \stackrel{\text{\tiny def}}{=} E \tilde D \in \mathcal{D}_m$, $D_2 \stackrel{\text{\tiny def}}{=} \overline{D} \in \mathcal{D}_n$, we see that $y-x$ is a nonzero vector in $\mathrm{im}\,\Gamma \cap \mathrm{ker}(\Gamma D_1 MD_2)$, implying that $\mathrm{rank}\,(\Gamma D_1 M D_2 \Gamma) < \mathrm{rank}\,\Gamma$.

(iii) $\Rightarrow$ (ii). Suppose there exist $D_1 \in \mathcal{D}_m, D_2 \in \mathcal{D}_n$ such that $\mathrm{rank}\,(\Gamma D_1 M D_2 \Gamma) < \mathrm{rank}\,\Gamma$, and choose $0 \neq \Delta x \in \mathrm{im}\,\Gamma \cap \mathrm{ker}(\Gamma D_1 M D_2)$. Define $x, y$ by
\[
x_i = \left\{\begin{array}{ll}\frac{\Delta x_i}{[\exp(D_2 \Delta x)]_i - 1} & (\Delta x_i \neq 0)\\1/(D_2)_{ii} & \mbox{otherwise}\end{array}\right., \quad y_i = x_i + \Delta x_i = [\exp(D_2 \Delta x)]_ix_i\,.
\]
Clearly $x$ and $y$ are positive vectors. Define $\overline{D} \in \mathcal{D}_n$ and $\tilde D \in \mathcal{D}_m$ (dependent on $x, y$) as above. Computation quickly confirms that $\overline{D} = D_2$. Set $E = D_1\tilde D^{-1} \in \mathcal{D}_m$. Then applying (\ref{eqDv}) gives
\[
\Gamma E (w(y)-w(x)) = \Gamma E \tilde D M D_2 (y-x) = \Gamma D_1 M D_2 (y-x) = 0\,.
\]
This completes the proof. \hfill
\end{proof}
\begin{remark}
\label{remMscale}
By equivalence (i) $\Leftrightarrow$ (iii) of Theorem~\ref{gen_powlaw}, observe that $\Gamma \rst -M^t \not> 0$ and $\Gamma \rst -M^t \not < 0$ occurs if and only if there exists $E \in \mathcal{D}_m$ such that $\Gamma E \mathrm{exp}(M\ln x)$ fails condition IC1. As the condition $\Gamma \rst -M^t \not> 0$ and $\Gamma \rst -M^t \not < 0$ is invariant under positive scaling of $M$, an immediate consequence is that given any $\alpha > 0$, $\Gamma E \mathrm{exp}(M\ln x)$ fails condition IC1 for some $E \in \mathcal{D}_m$ if and only if $\Gamma E' \mathrm{exp}(\alpha M\ln x)$ fails condition IC1 for some $E' \in \mathcal{D}_m$.
\end{remark}

\begin{remark} 
It may be helpful to restate the findings of Theorem~\ref{gen_powlaw} in words. Given $0 \neq \Gamma \in \mathbb{R}^{n \times m}$ and $M \in \mathbb{R}^{m \times n}$ the following are equivalent:
\begin{enumerate}[align=left,leftmargin=*]
\item Either $\Gamma$ is $r$-strongly compatible or $r$-strongly negatively compatible with $-M^t$ (where $r = \mathrm{rank}\,\Gamma$). Later, when we consider $\Gamma$ to be the stoichiometric matrix of a CRN, we will say that the CRN is ``$M$-concordant''.
\item The semiclass $\mathcal{Q}'(M)$ is $\Gamma$-nonsingular, or equivalently the reduced determinant of every power-law function $\Gamma E \mathrm{exp}(M\ln x)$ is nonvanishing on $\mathbb{R}^n_{\gg 0}$.
\item For each $c \in \mathbb{R}^n_{\gg 0}$, all power-law functions $\Gamma E \mathrm{exp}(M\ln x)$ are injective on the set $\{x \in \mathbb{R}^n_{\gg 0}\colon x \sim^\Gamma c\}$. Later, when discussing CRNs, we will term such a set a ``positive stoichiometry class''.
\end{enumerate}
\end{remark}

\begin{remark}[Extensions of Theorem~\ref{gen_powlaw}]
While Theorem~\ref{gen_powlaw} is apparently about power-law functions, the main conclusion is easily seen to apply to a much wider class of functions. Replacing $\exp(\cdot)$ and $\ln(\cdot)$ by any pair of strictly increasing diagonal $C^1$-diffeomorphisms $\theta(\cdot)$ and $\phi(\cdot)$, inverse to each other, and with domains/codomains such that $w(\cdot) = \theta(M\phi(\cdot))$ is well defined and preserves $\mathbb{R}^n_{\gg 0}$, leads nevertheless to the conclusion of Equation~\ref{eqDv}, namely that $\theta(M\phi(y))-\theta(M\phi(x)) = \tilde D M\overline{D}(y-x)$ for $x, y \in \mathbb{R}^n_{\gg 0}$.
\end{remark}

\begin{remark}[Results related to Theorem~\ref{gen_powlaw}]\label{rem:signCond} 
While the proofs here appear formally different, the fundamental ideas for the proof of Theorem~\ref{gen_powlaw} can be traced back to Craciun and Feinberg \cite{craciun}. The equivalence of (i) and (iii) in Theorem~\ref{gen_powlaw} is the object of Proposition 8.4. in Wiuf and Feliu's paper \cite{feliuwiufSIADS2013} (see also \cite[Corollary 7.6]{feliuwiufAMC2012}). 
The statement ``$\mathcal{Q}'(M)$ is $\Gamma$-nonsingular'' can be rephrased as follows: ``$M$ cannot map any nonzero vector from any qualitative class intersecting $\mathrm{im}\,\Gamma$ into any qualitative class intersecting $\mathrm{ker}\,\Gamma$'', or in more abbreviated notation: 
\[
\quad M(\mathcal{Q}(\mathrm{im}\,\Gamma)\backslash\{0\}) \cap \mathcal{Q}(\mathrm{ker}\,\Gamma)=\emptyset\,.
\]
This formulation makes the connection between Theorem~\ref{gen_powlaw} and Theorem~1.4 in  M\"uller et al. \cite{muellerAll}. Determinant conditions for injectivity in the spirit of Theorem~\ref{gen_powlaw}$(i)$ can be inferred from Craciun and Feinberg \cite{Craciun.2010ac}. 
Related determinant conditions may be obtained by exploiting the non-vanishing of the reduced determinant of $-\Gamma M$ (see Lemma \ref{lemmain0}), and by an explicit choice of basis of $\mathrm{im}\,\Gamma$; results along these lines are given in Feliu and Wiuf \cite{feliuwiufAMC2012, feliuwiufSIADS2013} and Gnacadja \cite{gnacadja}. 
\end{remark}

\begin{remark}[Condition IC1a]
\label{remic2dash}
Provided $M\geq 0$,  $w(x) = x^M$ is a continuous function on $\mathbb{R}^n_{\geq 0}$, and Lemma~\ref{lemICclosure} allows us to extend the final statement of Theorem~\ref{gen_powlaw} to ``for each $E \in \mathcal{D}_m$ the function $\Gamma v(x) = \Gamma E w(x)$ satisfies claim \apost{IC1}''. In fact following Proposition~5.2 in Feliu and Wiuf \cite{feliuwiufAMC2012}, we can do a little better: if $M \geq 0$, the final statement of Theorem~\ref{gen_powlaw} can in fact be replaced with: ``For each $E \in \mathcal{D}_m$ the function $\Gamma v(x) = \Gamma E w(x)$ satisfies claim IC1a'', where IC1a is defined as:
\begin{enumerate}[align=left,leftmargin=*]
\item[IC1a.] $x,y \in \mathbb{R}^n_{\geq 0}$, $x\simneq^\Gamma\! y$, and $v(x)+v(y) \gg 0$, imply $\Gamma v(y) \neq \Gamma v(x)$. 
\end{enumerate}
Observe that $v(x)+v(y) \gg 0$ if and only if $w(x)+w(y) \gg 0$, which is satisfied provided at least one of $x$ or $y$ lies in $\mathbb{R}^n_{\gg 0}$. Thus IC1a $\Rightarrow$ \apost{IC1} $\Rightarrow$ IC1. To see that IC1a can then replace IC1 in the final statement of Theorem~\ref{gen_powlaw}, we need only confirm that (\ref{eqDv}) remains true wherever $w(x)+w(y) \gg 0$; the remaining arguments follow through without alteration. Fix some $x,y \in \mathbb{R}^n_{\geq 0}$ such that $w_j(x)+w_j(y) > 0$ for each $j=1, \ldots, m$. Define $\mathbf{1}_x$ by $(\mathbf{1}_x)_i = 1$ if $x_i=0$ and $(\mathbf{1}_x)_i = 0$ otherwise. Define $\mathbf{1}_y$ similarly, and given $\delta>0$, define $x_\delta = x+\delta\mathbf{1}_x$, $y_\delta = y+\delta\mathbf{1}_y$. For any $\delta>0$, $x_\delta,y_\delta \in \mathbb{R}^n_{\gg 0}$ and so, by Equation~\ref{eqDv}, $w(y_\delta)-w(x_\delta) = \tilde D M\overline{D}(y_\delta-x_\delta)$ for some $\overline{D} \in \mathcal{D}_n$ and $\tilde D \in \mathcal{D}_m$ (dependent on $\delta$). 

(i) For small enough $\delta$, it is clear that $y_\delta-x_\delta \in \mathcal{Q}(y-x)$, i.e., $y_\delta-x_\delta = D'(y-x)$ for some $D' \in \mathcal{D}_n$ (dependent on $\delta$). 

(ii) For small enough $\delta$, $w(y_\delta)-w(x_\delta) \in \mathcal{Q}(w(y)-w(x))$, i.e., $w(y)-w(x) = D''(w(y_\delta) - w(x_\delta))$ for some $D'' \in \mathcal{D}_m$ (dependent on $\delta$): (a) If $w_j(y)-w_j(x) \neq 0$, then for small enough $\delta$, $(w_j(y_\delta)-w_j(x_\delta))(w_j(y)-w_j(x))>0$ by continuity of $w$. (b) If $w_j(x) = w_j(y) > 0$; then $x_i, y_i>0$ for each $i$ such that $M_{ji}>0$ and hence (for arbitrary $\delta$) $w_j(x_\delta) = w_j(x)$ and $w_j(y_\delta) = w_j(y)$. (c) Finally, $w_j(x)= w_j(y)=0$ is ruled out by assumption. 

Choosing $\delta > 0$ sufficiently small, (i) and (ii) give:
\[
w(y) - w(x) = D''(w(y_\delta)-w(x_\delta)) = D''\tilde D M\overline{D}(y_\delta-x_\delta) = D''\tilde D M\overline{D}D'(y-x).
\]
As $D''\tilde D \in \mathcal{D}_m$ and $\overline{D}D' \in \mathcal{D}_n$, (\ref{eqDv}) holds.
\end{remark}

\begin{lemma1}
\label{gen_powlaw1}
Let $\Gamma \in \mathbb{R}^{n \times m}$, $M \in \mathbb{R}^{m \times n}$ with $M_{ij} = 0$ or $M_{ij} \geq 1$ for all $i,j$ (resp., $0 \leq M \in \mathbb{R}^{m \times n}$, resp., $M \in \mathbb{R}^{m \times n}$), and suppose that $\Gamma \Bumpeq -M^t$. Then $\Gamma E x^M$ satisfies conditions \apostt{IC2} (resp, \apost{IC2}, resp., IC2), for each $E \in \mathcal{D}_m$.
\end{lemma1}
\begin{proof}
Fix $E \in \mathcal{D}_m$, define the map $w(x) = x^M$ with codomain $\mathbb{R}^m$. In the case that $M \in \mathbb{R}^{m \times n}$ with $M_{ij} = 0$ or $M_{ij} \geq 1$ for each $i,j$, $w$ is a $C^1$ map on $\mathbb{R}^{n}_{\geq 0}$ by Lemma~\ref{lemWhitney}; if $0 \leq M \in \mathbb{R}^{m \times n}$, $w$ is continuous on $\mathbb{R}^n_{\geq 0}$, and $C^1$ on $\mathbb{R}^n_{\gg 0}$; otherwise $w$ is $C^1$ on $\mathbb{R}^n_{\gg 0}$. Let $v = Ew$. For $x \in \mathbb{R}^n_{\gg 0}$ the Jacobian matrix $\Gamma\,Dv(x)$ takes the form $\Gamma M'$ where $M' \in \mathcal{Q}'(M)$ (Remark~\ref{remPLJac}). From Lemma~\ref{lemcompat_diag}, $\Gamma \Bumpeq -M^t$ implies $\Gamma \Bumpeq \mathcal{Q}'(-M^t)$ and hence, by Lemma~\ref{lemP0}, $-\Gamma M'$ is a $P_0$-matrix for all $M' \in \mathcal{Q}'(M)$. The result in each case now follows from Lemma~\ref{leminj2}. 
\hfill
\end{proof}

\begin{remark}
Clearly, we could replace the condition $\Gamma \Bumpeq -M^t$ by $\Gamma \Bumpeq M^t$ in Lemma~\ref{gen_powlaw1}: however, $\Gamma \Bumpeq -M^t$ is the situation arising in the study of CRNs.  
\end{remark}

\section{Injectivity results for CRNs}
\label{secCRNinj}

We apply the results of the previous sections to chemical reaction networks treating both general kinetics and power-law/mass action kinetics (all to be formally defined below). Throughout this section we consider a system of $m$ chemical reactions on $n$ species and generally choose and fix an ordering on species and reactions. We emphasise that no results are dependent on the choice of orderings. Reactions may or may not be reversible, but each reaction must be assigned a ``left-hand side'' and a ``right-hand side''. Where a reaction is irreversible we assume that reactants occur on the left and products on the right, namely the reaction proceeds from left to right. These conventions are merely to simplify the exposition. 

\begin{def1}[Stoichiometric matrix, left stoichiometric matrix, right stoichiometric matrix]
\label{defstoich}
Given a system of chemical reactions, define the {\em left stoichiometric matrix} $0 \leq \Gamma_l \in \mathbb{R}^{n \times m}$ and {\em right stoichiometric matrix} $0 \leq \Gamma_r \in \mathbb{R}^{n \times m}$ as follows: $(\Gamma_l)_{ij}$ is the number of molecules of species $i$ occurring on the left-hand side of reaction $j$; $(\Gamma_r)_{ij}$ is the number of molecules of species $i$ occurring on the right-hand side of reaction $j$. Define the {\em stoichiometric matrix} of the network as $\Gamma = \Gamma_r - \Gamma_l$. Any pair out of $\Gamma$, $\Gamma_l$, and $\Gamma_r$ fully specify a CRN.
\end{def1}

\begin{remark}
Note that the stoichiometric matrix is not uniquely defined, depending on the choice of orderings on the species and reactions, and (for reversible reactions) on the choice of left- and right-hand side for each reaction; when referring to the stoichiometric matrix of a system without further comment it will be assumed that these choices have been made and fixed. 
\end{remark}

\begin{def1}[Irreversible stoichiometric matrix]
Given an arbitrary CRN we may consider any reversible reaction as a pair of irreversible ones with reactants on the left and products on the right. Choosing and fixing any convenient ordering for these irreversible reactions gives a new CRN (formally speaking) whose stoichiometric matrix will be referred to as the {\em irreversible stoichiometric matrix} of the CRN. Notationally, where we need to refer both to the original stoichiometric matrix $\Gamma$ of a CRN and its irreversible stoichiometric matrix, we write $\overline{\Gamma}$ for the latter (although where there is no need for both, we generally write an arbitrary stoichiometric matrix as $\Gamma$). 
\end{def1}

\begin{def1}[Complexes, the complex digraph, and weak reversibility]
For a given CRN, the columns of $\Gamma_l$ and $\Gamma_r$ are a set of nonnegative vectors termed the {\em complexes} of the network \cite{hornjackson}. We allow, as a special case, the empty complex corresponding to the zero vector, and denoted $\emptyset$. Regarding these complexes as the vertices of a digraph, each irreversible reaction is now representable as an arc, converting a {\em source} complex into a {\em product} complex. Note that this digraph, which we term the {\em complex digraph} of the CRN, is quite distinct from its DSR graph whose vertices are individual species or reactions. A digraph is {\em weakly reversible} if each of its connected components is strongly connected, or equivalently, each arc figures in a cycle. A CRN is defined to be weakly reversible if its complex digraph is weakly reversible. Clearly, reversible CRNs are special cases of weakly reversible ones. Gunawardena \cite{gunawardenaCRNT} provides a number of equivalent characterisations of weak reversibility for CRNs.
\end{def1}

A system of chemical reactions with stoichiometric matrix $\Gamma \in\mathbb{R}^{n \times m}$ gives rise to the ODE
\begin{equation}
\label{reacsys}
\dot x = \Gamma v(x)\,.
\end{equation}
Here $x \in \mathbb{R}^n_{\geq 0}$, and $v$ describes the rates of reaction or ``kinetics'' of the system. We now consider different choices of kinetics which will play an important part in the results to follow.

\begin{def1}[General kinetics, weak general kinetics, positive general kinetics]
\label{defgenkin}
Given a CRN described by (\ref{reacsys}), we define some classes of kinetics as follows:
\begin{enumerate}[align=left,leftmargin=*]
\item {\bf General kinetics}: (i) $v$ is defined and $C^1$ on $\mathbb{R}^n_{\geq 0}$; (ii) $v$ satisfies Assumption K described in Appendix~\ref{appkin}.
\item {\bf Weak general kinetics}: (i) $v$ is defined and $C^1$ on $\mathbb{R}^n_{\gg 0}$, and continuous on $\mathbb{R}^n_{\geq 0}$; (ii) we ignore any parts of Assumption K that assume differentiability on $\partial\mathbb{R}^n_{\geq 0}$ (the details are in Appendix~\ref{appkin}). 
\item {\bf Positive general kinetics}: this is the restriction of general kinetics to the interior of the nonnegative orthant. (i) $v$ is defined and $C^1$ on $\mathbb{R}^n_{\gg 0}$; (ii) we ignore all elements of Assumption K which apply only on $\partial\mathbb{R}^n_{\geq 0}$. (The considerably reduced assumptions in this case are termed Assumption K$_\mathrm{o}$ in Appendix~\ref{appkin}.)
\end{enumerate}
\end{def1}

\begin{remark}[Assumption K]
Assumption K is a weak and physically reasonable assumption which can be summarised very roughly as ``reactions proceed if and only if all reactants are present, reaction rates are nondecreasing with reaction concentration, and reaction rates increase strictly with reactant concentration if and only if all reactants are present.'' General kinetics implies that the nonnegative orthant is forward invariant under the local semiflow generated by (\ref{reacsys}) (Lemma~\ref{lemma:invariance} in Appendix~\ref{appkin}). In the case of irreversible reactions, it also implies the assumptions termed K.1~and~K.2 in Feinberg \cite{feinberg}. Early papers treating CRNs with minimal kinetic assumptions include Angeli et al. \cite{angelipetrinet}, Banaji et al. \cite{banajiSIAM} and Craciun et al. \cite{CraciunHeltonWilliams}.
\end{remark}

\begin{def1}[Rate pattern]
\label{defratepattern}
Given a CRN $\mathcal{R}$ with some fixed left/right stoichiometric matrices, under the assumption of positive general kinetics (namely, Assumption K$_\mathrm{o}$ in Appendix~\ref{appkin}), as $x$ explores $\mathbb{R}^n_{\gg 0}$, the derivative $Dv(x)$ of $v(x)$ in (\ref{reacsys}) may vary within a set termed the {\em rate pattern} of the CRN. More precisely, the rate pattern is the set of all possible $Dv(x)$ for all functions satisfying Assumption K$_\mathrm{o}$ associated with $\mathcal{R}$. The rate pattern is a matrix-pattern (Definition~\ref{defmatrixpattern}) which is, in fact, a single qualitative class if and only if the CRN includes no reversible reaction with some species occurring on both sides of the reaction. In the case of a CRN with irreversible stoichiometric matrix $\Gamma$ and corresponding left stoichiometric matrix $\Gamma_l$, the reader may confirm that the rate pattern is just $\mathcal{Q}(\Gamma_l^t)$. In the case of a CRN with some reversible reactions, the rate pattern is given explicitly in Lemma~\ref{lemconcordrev} below.
\end{def1}

\begin{def1}[Power-law kinetics, physical power-law kinetics, power-law general kinetics, mass action kinetics, rate constants]
\label{defMACRN}
Let $\Gamma_l, \Gamma_r\in \mathbb{Z}^{n \times m}$ be the left and right stoichiometric matrices of an irreversible system of reactions, now assumed to be nonnegative integer matrices. Let $\Gamma = \Gamma_r-\Gamma_l$. Given $M \in \mathbb{R}^{n \times m}$, and $E \in \mathcal{D}_m$, we refer to (\ref{reacsys}) with $v = Ex^{M^t}$ as a {\em CRN with power-law kinetics}. Note that in general $v$ is only defined on $\mathbb{R}^n_{\gg 0}$. It is convenient to abbreviate {\em CRN with power-law kinetics and matrix of exponents $M^t$} to {\em CRN with $M$-power-law kinetics}. If $M\in \mathcal{Q}(\Gamma_l)$, we say that the system is a {\em CRN with physical power-law kinetics}. In this case, as $M \geq 0$, $\Gamma E x^{M^t}$ is defined and continuous on $\mathbb{R}^n_{\geq 0}$, and $C^1$ on $\mathbb{R}^{n}_{\gg 0}$. It is sometimes useful to consider {\em power-law general kinetics}, the intersection of power-law kinetics and general kinetics: in particular, if $M\in \mathcal{Q}(\Gamma_l)$ and all nonzero entries in $M$ are greater than or equal to $1$, $\Gamma E x^{M^t}$ is defined and $C^1$ on $\mathbb{R}^n_{\geq 0}$ (Lemma~\ref{lemWhitney}) and hence we get an instance of power-law general kinetics. The special case $M = \Gamma_{l}$ gives a CRN with {\em mass action kinetics}. In this case $\Gamma E x^{M^t}$ is a polynomial vector field on $\mathbb{R}^n$. In all cases, the diagonal entries of $E$ are termed the {\em rate constants} for the reactions. Note that rate constants are always assumed to be positive.
\end{def1}

\begin{remark}[Relationships among the different classes of kinetics]
\label{remgenMA}
Clearly every CRN with general kinetics (GK) is a CRN with weak general kinetics (WGK), which is in turn a CRN with positive general kinetics (GK$_+$). A CRN with physical power-law kinetics (PPLK) is a CRN with weak general kinetics (WGK) (see Remark~\ref{remPLJac}), and also a CRN with power-law kinetics (PLK). A CRN with power-law general kinetics (PLGK) is by definition both a CRN with physical power-law kinetics and a CRN with general kinetics. Mass action kinetics (MAK), giving rise to polynomial vector fields, is a special case of power-law general kinetics. These inclusions (all strict) are illustrated graphically. 
\begin{center}
\begin{tikzpicture}[scale=0.8]
\node at (-2.3,1) {MAK};
\node at (-1.4,1) {$\subseteq$};
\node at (-0.3,1) {PLGK};
\node[rotate=-25] at (0.85,0.7) {$\subseteq$};
\node at (1.6,0.5) {GK};
\node[rotate=25] at (2.4,0.7) {$\subseteq$};
\node[rotate=25] at (0.75,1.25) {$\subseteq$};
\node at (1.6,1.5) {PPLK};
\node[rotate=-25] at (2.5,1.25) {$\subseteq$};
\node at (3.3,1) {WGK};
\node[rotate=25] at (2.5,1.75) {$\subseteq$};
\node at (3.3,2) {PLK};
\node at (4.25,1) {$\subseteq$};
\node at (5,0.95) {GK$_+$};
\end{tikzpicture}
\end{center}
Note that a given CRN with mass action kinetics -- or indeed any fixed power-law kinetics -- is a family of vector fields parameterised by the vector of rate constants, a much smaller family than the whole class of general kinetics. As we will see, a CRN with mass action kinetics, or some other fixed physical power-law kinetics, may be injective where the same CRN may fail to be injective with general kinetics.
\end{remark}

\begin{def1}[Stoichiometric subspace, stoichiometry class, nontrivial stoichiometry class, positive stoichiometry class]
Given a CRN with stoichiometric matrix $\Gamma \in \mathbb{R}^{n \times m}$, $\mathrm{im}\,\Gamma \subseteq \mathbb{R}^n$ is termed the {\em stoichiometric subspace} of the network. Given $p \in \mathbb{R}^n_{\geq 0}$, the set:
\[
S_p = \{y \in \mathbb{R}^n_{\geq 0} \colon y \sim^\Gamma p\}
\]
is the {\em stoichiometry class of $p$}. A stoichiometry class which intersects $\mathbb{R}^n_{\gg 0}$ is {\em nontrivial}. The intersection of a nontrivial stoichiometry class with $\mathbb{R}^n_{\gg 0}$ is a {\em positive stoichiometry class}.
\end{def1}

Since Assumption K ensures forward invariance of $\mathbb{R}^n_{\geq 0}$ (Lemma~\ref{lemma:invariance} in Appendix~\ref{appkin}) and cosets of $\mathrm{im}\,\Gamma$ are also forward invariant for (\ref{reacsys}), stoichiometry classes are forward invariant sets for (\ref{reacsys}) under the assumption of general kinetics.

\begin{def1}[Fully open extension of a CRN]
Consider the system $\dot x = \Gamma v(x)$ in (\ref{reacsys}). Let $c \in \mathbb{R}^n_{\geq 0}$, $U$ be the domain of $v$, and $q\colon U \to \mathbb{R}^n_{\geq 0}$ have the same differentiability as $v$ with derivative $Dq(x) \in \mathcal{D}_n$ where differentiable. The system
\begin{equation}
\label{reacsys1}
\dot x = \Gamma v(x) + c - q(x)
\end{equation}
will be termed the {\em fully open extension} of (\ref{reacsys}) (also referred to as {\em the system with inflows and outflows}). If a claim is made for a fully open system without qualification, this means that it holds for all allowed rates $v$ and all $c$ and $q$ as above. The term {\em fully open extension} makes sense as (\ref{reacsys}) is precisely the ODE obtained by adding to the CRN inflow and outflow reactions for each species (namely reactions of the form $\emptyset \rightleftharpoons A_i$ for each species $A_i$), with the assumption of general kinetics, weak general kinetics, or general kinetics on $\mathbb{R}^n_{\geq 0}$ depending on the assumptions about domain and differentiability of $v$. Note however that if we refer to the fully open extension of a CRN with, say, mass action kinetics, to maximise generality we do not necessarily assume that the inflow and outflow reactions have mass action kinetics.
\end{def1}

\begin{remark}[Injectivity of CRNs and of their fully open extensions]
We will see below several related but distinct results about injectivity of CRNs which are not necessarily fully open on the one hand, and injectivity of fully open CRNs on the other. A natural question is how these claims relate to each other. This question has been discussed in \cite{craciun2,banajicraciun2, Craciun.2010ac,banajiJMAA,shinarfeinbergconcord2}. Roughly speaking, conditions which imply injectivity of the fully open extension of a CRN also imply injectivity of the original CRN, if and only if certain additional ``nondegeneracy'' conditions are met. These nondegeneracy conditions take slightly different forms depending on the kinetics. The details are in Corollary~\ref{corstructdiscord} and Remark~\ref{remstructdiscord} below (for general kinetics), and Corollary~\ref{cornormal} and Remark~\ref{remnormal} below (for power-law kinetics). 
\end{remark}

\subsection{Injectivity of arbitrary CRNs without kinetic assumptions}
\label{secCRNarbitrary}

We examine the functions defined by (\ref{reacsys}) and (\ref{reacsys1}). Note that a system of the form (\ref{reacsys}) or (\ref{reacsys1}) with some choice of kinetics defines a set of allowed vector fields; here a CRN with a choice of kinetics is said to be injective on some set if each allowed vector field is injective on this set. At this stage it may prove helpful to list the implications of some of the injectivity conditions defined earlier for the possibility of multiple equilibria. We refer to equilibria on the same stoichiometry class as ``compatible''. Note that the list below states {\em implications}, not definitions, of the conditions:  
\begin{enumerate}[align=left,leftmargin=*]
\item[(\apostt{IC1})] If there are two compatible equilibria, they must both lie on $\partial \mathbb{R}^n_{\geq 0}$, and further must both lie on the same facet of $\mathbb{R}^n_{\geq 0}$. Any positive equilibrium is the sole equilibrium on its class. 
\item[(\apost{IC1})] If there are two compatible equilibria, they must both lie on $\partial \mathbb{R}^n_{\geq 0}$. Any positive equilibrium is the sole equilibrium on its class. As IC1a $\Rightarrow$ \apost{IC1}, the same holds for IC1a.
\item[(IC1)] If there are two compatible equilibria, at least one must be on  $\partial \mathbb{R}^n_{\geq 0}$.
\item[(IC1$^{-}$)] Two positive, compatible, equilibria cannot both be nondegenerate. 
\item[(\apostt{IC2})] The fully open system has no more than one equilibrium on $\mathbb{R}^n_{\geq 0}$. 
\item[(\apost{IC2})] If the fully open system has two equilibria, they must both lie on $\partial \mathbb{R}^n_{\geq 0}$. Any positive equilibrium of the fully open system is the sole equilibrium of the system.
\item[(IC2)] The fully open system can have no more than one positive equilibrium. 
\end{enumerate}

Several useful results are gathered in the following lemma: in order to highlight the purely matrix-theoretic aspect of many of the results, we do not assume any class of kinetics for the time-being, but only that the derivatives of reaction rates on $\mathbb{R}^n_{\gg 0}$ belong to some matrix-pattern. However application of the lemma to CRNs with general kinetics will be immediate by Remark~\ref{remgenkin} below.
\begin{lemma1}
\label{thmCRNgen}
Let the stoichiometric matrix $0 \neq \Gamma \in \mathbb{R}^{n \times m}$ of a CRN have rank $r$, and consider the vector field $\Gamma v(x)$ defined by (\ref{reacsys}). Let $v$ be defined and $C^1$ on $\mathbb{R}^n_{\gg 0}$. Let $Dv \in \mathcal{V}$ on $\mathbb{R}^n_{\gg 0}$ where $\mathcal{V}\subseteq \mathbb{R}^{m \times n}$ is a matrix-pattern (Definition~\ref{defratepattern}). Define the conditions:
\begin{enumerate}[align=left,leftmargin=*]
\item The DSR graph $G_{\Gamma, -V}$ satisfies Condition~($*$) for each $V \in \mathcal{V}$.
\label{gencstar}
\item $\Gamma \Bumpeq -\mathcal{V}^t$.
\label{gencompat}
\item $\Gamma \rst -V^t \neq 0$ for some $V \in \mathcal{V}$. 
\label{gennondegen}
\item $\Gamma \rst -\mathcal{V}^t > 0$.
\label{genrscompat}
\item $\Gamma \rst -\mathcal{V}^t < 0$.
\label{genrsncompat}
\item $\Gamma v$ satisfies claim IC1, namely, it is injective on the relative interior of each stoichiometry class. 
\label{genIC1}
\item $\Gamma v$ satisfies claim \apost{IC1}, namely, $\Gamma v$ can only take the same value at distinct points on a stoichiometry class if they are both on $\partial \mathbb{R}^n_{\geq 0}$. 
\label{genIC11}
\item $\Gamma v$ satisfies claim \apostt{IC1}, namely, $\Gamma v$ can only take the same value at distinct points on a stoichiometry class if they are both on $\partial \mathbb{R}^n_{\geq 0}$, and in fact belong to the same facet of $\mathbb{R}^n_{\geq 0}$.
\label{genconcl1}
\item $\Gamma v$ satisfies claim IC2, namely, the fully open system is injective on $\mathbb{R}^n_{\gg 0}$.
\label{genIC2}
\item $\Gamma v$ satisfies claim \apost{IC2}, namely, the fully open system can only take the same value at two distinct points of $\mathbb{R}^n_{\geq 0}$ if they are both on $\partial \mathbb{R}^n_{\geq 0}$.
\label{genIC21}
\item $\Gamma v$ satisfies claim \apostt{IC2}, namely, the fully open system is injective on $\mathbb{R}^n_{\geq 0}$.
\label{genconcl2}
\item $\Gamma v$ satisfies claim IC1$^{-}$, namely the system can only take the same value at two distinct compatible points of $\mathbb{R}^n_{\gg 0}$ if at least one is degenerate.
\label{genIC1minus}
\end{enumerate}
Then (\ref{gencstar}) $\Rightarrow$ (\ref{gencompat}) $\Rightarrow$ (\ref{genIC2}) $\Rightarrow$ (\ref{genIC1minus}), (\ref{genrscompat}) $\Rightarrow$ (\ref{genIC1}), (\ref{genrsncompat}) $\Rightarrow$ (\ref{genIC1}), and [(\ref{gencompat}) and (\ref{gennondegen})] $\Rightarrow$ (\ref{genrscompat}). If $v$ is additionally defined and continuous on $\mathbb{R}^n_{\geq 0}$, then (\ref{genIC2}) $\Rightarrow$ (\ref{genIC21}) and (\ref{genIC1}) $\Rightarrow$ (\ref{genIC11}). If $v$ is defined and $C^1$ on $\mathbb{R}^n_{\geq 0}$, then in addition (\ref{gencompat}) $\Rightarrow$ (\ref{genconcl2}), (\ref{genrscompat}) $\Rightarrow$ (\ref{genconcl1}), and (\ref{genrsncompat}) $\Rightarrow$ (\ref{genconcl1}). These conclusions are summarised graphically as follows:

\begin{center}
\begin{tikzpicture}[scale=1.1]
\fill[color=black!30] (0.5,-0.3) -- (3.4,-0.3) -- (3.4,0.7) -- (0.5,0.7) -- cycle;
\fill[color=black!30] (0.5,2.3) -- (3.4,2.3) -- (3.4,3.3) -- (0.5,3.3) -- cycle;
\node at (1,0) {\apost{IC2}};
\node at (2.8,0) {\apost{IC1}};
\node at (1,0.5) {$\Downarrow$};
\node at (2.8,0.5) {$\Downarrow$};
\node at (1,1) {IC2};
\node at (1.55,0.95) {$\Rightarrow$};
\node at (2.1,1) {IC1$^{-}$};
\node at (2.8,1) {IC1};
\node at (1,1.5) {$\Downarrow$};
\node at (2.8,1.5) {$\Downarrow$};
\node at (1,2) {(\ref{gencompat})};
\node at (2.8,2) {(\ref{genrscompat})};
\node at (0,2) {(\ref{gencstar})};
\node at (0.5,2) {$\Rightarrow$};
\node[rotate=50] at (3.2,2.5) {$\Uparrow$};
\node[rotate=310] at (3.2,1.5) {$\Downarrow$};
\node at (1.9,2) {{\Large $\Longrightarrow$}};
\node at (1.9,1.7) {\tiny with (\ref{gennondegen})};
\node at (3.5,2) {(\ref{genrsncompat})};
\node at (1,2.5) {$\Uparrow$};
\node at (2.8,2.5) {$\Uparrow$};
\node at (1,3) {\apostt{IC2}};
\node at (2.8,3) {\apostt{IC1}};

\node at (5.1,0.4) {(if $v$ is defined and};
\node at (5.2,-0.1) {continuous on $\mathbb{R}^n_{\geq 0}$)};

\node at (4.75,3.0) {(if $v$ is defined};
\node at (4.9,2.5) {and $C^1$ on $\mathbb{R}^n_{\geq 0}$)};
\end{tikzpicture}
\end{center}

\end{lemma1}
\begin{proof}
Observe first that being a matrix-pattern $\mathcal{V}$ is strongly stable under path integration (Remark~\ref{remineq}). (\ref{gencstar}) $\Rightarrow$ (\ref{gencompat}): this is the claim of Lemma~\ref{lemmaina}. (\ref{gencompat}) $\Rightarrow$ (\ref{genIC2}): by Lemma~\ref{lemP0}, if $\Gamma \Bumpeq -\mathcal{V}^t$ then $-\Gamma V$ is a $P_0$-matrix for each $V \in \mathcal{V}$; the claim now follows from Lemma~\ref{leminj2}. (\ref{genIC2}) $\Rightarrow$ (\ref{genIC1minus}) follows from Lemma~\ref{lemICrels}. (\ref{genrscompat}) $\Rightarrow$ (\ref{genIC1}) and (\ref{genrsncompat}) $\Rightarrow$ (\ref{genIC1}) follow from Theorem~\ref{thminj}. [(\ref{gencompat}) and (\ref{gennondegen})] $\Rightarrow$ (\ref{genrscompat}) by definition and Corollary~\ref{corcompattoconcord} (see also Remark~\ref{remnondegen}). 

If $v$ is defined and $C^1$ on $\mathbb{R}^n_{\geq 0}$, then $Dv \in \overline{\mathcal{V}}$ on $\mathbb{R}^n_{\geq 0}$. (\ref{gencompat}) $\Rightarrow$ (\ref{genconcl2}): by Lemma~\ref{lemP0}, if $\Gamma \Bumpeq -V^t$ for each $V \in \mathcal{V}$ then $-\Gamma V$ is a $P_0$-matrix for each $V \in \mathcal{V}$, and by closure for each $V \in \overline{\mathcal{V}}$; the claim now follows from Lemma~\ref{leminj2}. (\ref{genrscompat}) $\Rightarrow$ (\ref{genconcl1}): that $-\Gamma v$ satisfies claim \apostt{IC1} follows from the second part of Theorem~\ref{thminj}; immediately the same holds for $\Gamma v$. (\ref{genrsncompat}) $\Rightarrow$ (\ref{genconcl1}) follows similarly from Theorem~\ref{thminj}. 

If $v$ is defined and continuous on $\mathbb{R}^n_{\geq 0}$, then the conclusions (\ref{genIC2}) $\Rightarrow$ (\ref{genIC21}) and (\ref{genIC1}) $\Rightarrow$ (\ref{genIC11}) follow from Lemma~\ref{lemICclosure}.
\hfill
\end{proof}

\begin{remark}[Implications of Lemma~\ref{thmCRNgen} for general kinetics]
\label{remgenkin}
Note that only the implications (\ref{genrscompat}) $\Rightarrow$ (\ref{genIC1}), (\ref{genrsncompat}) $\Rightarrow$ (\ref{genIC1}), (\ref{genrscompat}) $\Rightarrow$ (\ref{genconcl1}), (\ref{genrsncompat}) $\Rightarrow$ (\ref{genconcl1}), and [(\ref{gencompat}) and (\ref{gennondegen})] $\Rightarrow$ (\ref{genrscompat}) of Lemma~\ref{thmCRNgen} require $\mathcal{V}$ to be a matrix pattern: all others follow for arbitrary $\mathcal{V}$. Observe also that the Lemma immediately translates into statements about CRNs with positive general kinetics, weak general kinetics, and general kinetics if we fix the stoichiometric matrix $\Gamma$, and set $\mathcal{V}$ to be the associated rate pattern (Definition~\ref{defratepattern}).
\end{remark}

\begin{remark}
\label{remCRNtoopen}
The diagram accompanying Lemma~\ref{thmCRNgen} divides naturally into the left hand side, concerned with conclusions about a fully open CRN (IC2, \apost{IC2}, \apostt{IC2}), and the right hand side, concerned with conclusions which apply on each stoichiometry class (IC1, \apost{IC1}, \apostt{IC1}, IC1$^{-}$). The implication (9) $\Rightarrow$ (12) (namely, IC2 $\Rightarrow$ IC1$^{-}$) provides an automatic link. More important is the implication [(2) and (3)] $\Rightarrow$ (4) (namely, $\Gamma \Bumpeq -\mathcal{V}^t$ and $\neg(\Gamma \rst -\mathcal{V}^t = 0)$ $\Rightarrow$ $\Gamma \rst -\mathcal{V}^t > 0$) which provides the ``bridge'' between questions of injectivity of a CRN with general kinetics and its fully open extension, discussed further in Corollary~\ref{corstructdiscord} and Remark~\ref{remstructdiscord} below.
\end{remark}

\subsection{Concordance and accordance} 

We will provide several equivalent definitions, and a variety of results, associated with two important concepts: ``concordance'' and ``accordance''. Concordance, and related notions, are associated with injectivity of CRNs on stoichiometry classes, while accordance, and related notions, are associated with injectivity of fully open CRNs. The term ``concordance'' originates in Shinar and Feinberg \cite{shinarfeinbergconcord1}, although elements of the notion appear in various earlier papers, including Banaji and Craciun \cite{banajicraciun2} and Banaji \cite{banajiJMAA}, and the form in which we present concordance is rather different from \cite{shinarfeinbergconcord1}. The term ``accordance'' is used for the first time here, but note that the concept figures heavily in Banaji et al. \cite{banajiSIAM} and Banaji and Craciun \cite{banajicraciun2} and other related work of the first author. Apart from these references, results connected closely to both concordance and accordance have appeared implicitly before in the literature, as detailed in remarks below. We begin with some general and abstract definitions, followed by various more computationally useful formulations.

\begin{def1}[Concordance, discordance, structural discordance, accordance]
\label{defconcordbasic}
A CRN $\mathcal{R}$ is
\begin{enumerate}[align=left,leftmargin=*]
\item {\bf Concordant} iff, for all positive general kinetics, the reduced determinant of $\mathcal{R}$ is nonzero, namely all Jacobian matrices are homeomorphisms on the stoichiometric subspace. 
\item {\bf Discordant} if it is not concordant.
\item {\bf Structurally discordant} iff, for all positive general kinetics, the reduced determinant of $\mathcal{R}$ is zero, namely all Jacobian matrices, restricted to the stoichiometric subspace, are singular.
\item {\bf Accordant} iff, for all positive general kinetics, the negative of the Jacobian matrix of $\mathcal{R}$ is a $P_0$-matrix. Equivalently, all Jacobian matrices of the fully open system (\ref{reacsys1}) with positive general kinetics are nonsingular (Remark~\ref{remP0}).
\end{enumerate}
We will shortly see that these definitions make sense, namely they are true properties of a CRN, and independent of the choice of ordering on species and reactions, and of whether we treat reversible reactions as single objects or as pairs of irreversible reactions. In particular, if we make some choices and fix the stoichiometric matrix $\Gamma$, so that Assumption K$_\mathrm{o}$ (Appendix~\ref{appkin}) gives us the rate pattern $\mathcal{V}$ (Definition~\ref{defratepattern}), we have that $\mathcal{R}$ is:
\begin{enumerate}[align=left,leftmargin=*]
\item {\bf Concordant} iff $\mathrm{det}_\Gamma\,\Gamma\, V \neq 0$ for all $V \in \mathcal{V}$.
\item {\bf Discordant} iff $\mathrm{det}_\Gamma\,\Gamma\, V = 0$ for some $V \in \mathcal{V}$.
\item {\bf Structurally discordant} iff $\mathrm{det}_\Gamma\,\Gamma\, V = 0$ for all $V \in \mathcal{V}$.
\item {\bf Accordant} iff $-\Gamma\,V$ is a $P_0$-matrix for all $V \in \mathcal{V}$, namely $\mathrm{det}(-\Gamma\,V + D) > 0$ for all $V \in \mathcal{V}$ and all $D \in \mathcal{D}_n$.
\end{enumerate}
\end{def1}

Depending on the task in hand, different, equivalent, characterisations of concordance, accordance, etc., prove useful. For example, the best characterisation from the point of view of computing whether a given CRN is concordant, may not be the best from the point of view of proving additional theoretical results. 

\begin{remark}[Accordance as concordance of the fully open extension of a network]
The notion of accordance and some of its implications are developed in section 3.3 of \cite{banajicraciun2}, although the term is not used. The characterisation of accordance in Definition~\ref{defconcordbasic} as nonsingularity of all Jacobian matrices of the fully open extension of a CRN under the assumption of positive general kinetics, makes it clear that a CRN is accordant if and only if its fully open extension is concordant. Conversely concordance is the natural generalisation of accordance to CRNs which are not necessarily fully open.  
\end{remark}

The following notions are all so closely related that we present them in a group.  
\begin{def1}[$M$-concordant, semiconcordant, $M$-normal, normal, $M$-accordant, semiaccordant]
\label{defconcord}
Consider a CRN $\mathcal{R}$ with irreversible stoichiometric matrix $\Gamma \in \mathbb{R}^{n \times m}$ having rank $r$, and left stoichiometric matrix $\Gamma_l$. Let $M \in \mathbb{R}^{n \times m}$ be arbitrary. $\mathcal{R}$ is:
\begin{enumerate}[align=left,leftmargin=*]
\item {\bf $M$-concordant} if $\Gamma \rst M > 0$ or $\Gamma \rst M < 0$; equivalently, by Lemma~\ref{lemMcompat}, $\mathcal{Q}'(M^t)$ is $\Gamma$-nonsingular.
\item {\bf Semiconcordant} if it is $\Gamma_l$-concordant; equivalently $\mathcal{Q}'(\Gamma_l^t)$ is $\Gamma$-nonsingular. 
\item {\bf $M$-normal} if $\Gamma \rst M \neq 0$; equivalently, by Lemma~\ref{lemMcompat}, $\mathcal{Q}'(M^t)$ is not $\Gamma$-singular.
\item {\bf Normal} if it is $\Gamma_l$-normal; equivalently, $\mathcal{Q}'(\Gamma_l^t)$ is not $\Gamma$-singular. (See also \cite{Craciun.2010ac}). 
\item {\bf $M$-accordant} if $\Gamma \Bumpeq -M$; equivalently, by Lemma~\ref{lemP0}, $-\Gamma V$ is a $P_0$-matrix for all $V \in \mathcal{Q}'(M^t)$.
\item {\bf Semiaccordant} if it is $\Gamma_l$-accordant; equivalently, by Lemma~\ref{lemP0}, $-\Gamma V$ is a $P_0$-matrix for all $V \in \mathcal{Q}'(\Gamma_l^t)$).
\end{enumerate}
\end{def1}

\begin{lemma1}[Concordance and discordance in terms of minors for an irreversible CRN]
\label{lemconcordbasic}
Consider a CRN $\mathcal{R}$ with irreversible stoichiometric matrix $\Gamma \in \mathbb{R}^{n \times m}$ having rank $r$, and left stoichiometric matrix $\Gamma_l$. Then $\mathcal{R}$ is:
\begin{enumerate}[align=left,leftmargin=*]
\item {\bf Concordant} iff it is $M$-concordant for each $M \in \mathcal{Q}(\Gamma_l)$, namely $\mathcal{Q}(\Gamma_l^t)$ is $\Gamma$-nonsingular. Equivalently, $\Gamma \rst \mathcal{Q}(\Gamma_l) > 0$ or $\Gamma \rst \mathcal{Q}(\Gamma_l) < 0$. 
\item {\bf Discordant} iff there exists $M \in \mathcal{Q}(\Gamma_l)$ such that $\Gamma\rst M \not > 0$ and $\Gamma\rst M \not < 0$.
\item {\bf Structurally discordant} iff $\Gamma \rst \mathcal{Q}(\Gamma_l) = 0$, namely $\mathcal{Q}(\Gamma_l^t)$ is $\Gamma$-singular. Equivalently, it is not $M$-normal for any $M \in \mathcal{Q}(\Gamma_l)$.
\item {\bf Accordant} iff $\Gamma \Bumpeq \mathcal{Q}(-\Gamma_l)$. Equivalently, $-\Gamma V$ is a $P_0$-matrix for all $V \in \mathcal{Q}(\Gamma_l^t)$).
\end{enumerate}
\end{lemma1}
\begin{proof}
We need only note that the assumption of positive general kinetics implies that $\mathcal{R}$ has rate pattern $\mathcal{Q}(\Gamma^t_l)$. The claims now follow immediately from Definitions~\ref{defconcordbasic}~and~\ref{defconcord}, noting that the characterisation of discordance follows from Lemma~\ref{lemrevirrev}(3)(iii), and $\Gamma \Bumpeq \mathcal{Q}(-\Gamma_l)$ is equivalent to $-\Gamma V$ is a $P_0$-matrix for all $V \in \mathcal{Q}(\Gamma_l^t)$ by Lemma~\ref{lemP0}.
\hfill \end{proof}

\begin{remark}[Concordance as defined by Shinar and Feinberg in \cite{shinarfeinbergconcord1}]
\label{remconcord1}
It can be confirmed that for an irreversible CRN the following are equivalent:
\begin{itemize}[align=left,leftmargin=*]
\item The network is concordant in the sense of Shinar and Feinberg \cite{shinarfeinbergconcord1}.
\item The network is concordant as defined here.
\end{itemize}
Shinar and Feinberg's definition of concordance is presented in \cite{shinarfeinbergconcord1}, and this equivalence is shown in Appendix~\ref{appconcord}. Shinar and Feinberg showed that a network is concordant if and only if it is injective in a sense similar to \apost{IC1} for any {\em weakly monotonic kinetics} \cite[Definition 4.5]{shinarfeinbergconcord1}, thus obtaining a result related to some of the claims in Theorem~\ref{thmnoninjgen} below. Further details are given below.
\end{remark}

The following lemma provides computational conditions for concordance and accordance of a CRN in full generality, and confirms that these are consistent with Lemma~\ref{lemconcordbasic} for an irreversible CRN. Together with Remark~\ref{remreord}, this tells us that to confirm concordance/accordance of CRNs we can ignore both species and reaction ordering and also choose to treat reversible reactions as irreversible pairs, or not, as we wish. The freedom this latter choice affords us may lead to significant computational simplification. 
\begin{lemma1}[Concordance and accordance in terms of minors for a general CRN]
\label{lemconcordrev}
Suppose a CRN $\mathcal{R}$ has stoichiometric matrix $\Gamma = [\Gamma^1|\Gamma^2]$ with rank $r$, left stoichiometric matrix $\Gamma_l = [\Gamma_l^1|\Gamma_l^2]$, and right stoichiometric matrix $\Gamma_r = [\Gamma_r^1|\Gamma_r^2]$, where reactions corresponding to $\Gamma^1$ are reversible and those corresponding to $\Gamma^2$ are irreversible. Then with $\mathcal{V} = [\mathcal{Q}(\Gamma_l^1) - \mathcal{Q}(\Gamma_r^1)|\mathcal{Q}(\Gamma_l^2)]$, $\mathcal{R}$ is:
\begin{enumerate}[align=left,leftmargin=*]
\item {\bf Concordant} iff $\Gamma \rst \mathcal{V} > 0$ or $\Gamma \rst \mathcal{V} < 0$.
\item {\bf Discordant} iff there exists $V \in \mathcal{V}$ such that $\Gamma \rst V \not > 0$ and $\Gamma \rst V \not < 0$.
\item {\bf Structurally discordant} iff $\Gamma \rst \mathcal{V} = 0$.
\item {\bf Accordant} iff $\Gamma \Bumpeq - \mathcal{V}$.
\end{enumerate}
Moreover these characterisations are consistent with those in Lemma~\ref{lemconcordbasic}: if $\overline{\mathcal{R}}$ is the corresponding irreversible CRN, then $\mathcal{R}$ is concordant (resp., structurally discordant, resp., accordant) if and only if $\overline{\mathcal{R}}$ is concordant (resp., structurally discordant, resp., accordant) in the sense of Lemma~\ref{lemconcordbasic}.
\end{lemma1}
\begin{proof}
Assumption K$_o$ (Appendix~\ref{appkin}) implies that the rate pattern of $\mathcal{R}$ is precisely the matrix-pattern $\mathcal{V} = [\mathcal{Q}(\Gamma_l^1) - \mathcal{Q}(\Gamma_r^1)|\mathcal{Q}(\Gamma_l^2)]$. The characterisations now follow from the definitions in Definition~\ref{defconcordbasic}, noting that (2) is the negation of (1) via Lemma~\ref{lemrevirrev}(3)(iii). To directly confirm consistency with Lemma~\ref{lemconcordbasic}, without loss of generality let $\overline{\mathcal{R}}$ have stoichiometric matrix $\overline{\Gamma} = [\Gamma^1|\Gamma^2|{-\Gamma^1}]$ and left stoichiometric matrix $\overline{\Gamma}_l = [\Gamma_l^1|\Gamma_l^2|\Gamma_r^1]$. Clearly $\mathrm{rank}\,\Gamma = \mathrm{rank}\,\overline{\Gamma}$. Then by Lemma~\ref{lemrevirrev}, claim (7): 
\[
\overline{\Gamma} \rst \mathcal{Q}(\overline{\Gamma}_l) > 0\,\, \mbox{($<0$, $=0$)} \,\, \Leftrightarrow \,\, \Gamma \rst [\mathcal{Q}(\Gamma_l^1) - \mathcal{Q}(\Gamma_r^1)|\mathcal{Q}(\Gamma_l^2)] > 0\,\, \mbox{($<0$, $=0$)}\,.
\]
By the same result, for each $n \in \{1, \ldots, r\}$:
\[
\overline{\Gamma} \nst \mathcal{Q}({-\overline{\Gamma}_l}) \geq 0 \,\, \Leftrightarrow \,\, \Gamma \nst [{-\mathcal{Q}(\Gamma_l^1)} + \mathcal{Q}(\Gamma_r^1)|\mathcal{Q}({-\Gamma_l^2})] \geq 0\,,
\]
and so, $\overline{\Gamma} \Bumpeq \mathcal{Q}({-\overline{\Gamma}_l})\, \Leftrightarrow\, \Gamma \Bumpeq [{-\mathcal{Q}(\Gamma_l^1)} + \mathcal{Q}(\Gamma_r^1)|\mathcal{Q}(-\Gamma_l^2)]$. 
\hfill
\end{proof}

We close this section by noting that in the special case of weakly reversible CRNs, we need only check ``half'' of the concordance/semiconcordance conditions. 
\begin{lemma1}[Concordance/semiconcordance for weakly reversible CRNs]
\label{lemWRconcord}
Let $\mathcal{R}$ be a weakly reversible CRN with stoichiometric matrix $\Gamma$ and rate pattern $\mathcal{V}$. Then (i) $\mathcal{R}$ is concordant iff $\Gamma \rst -\mathcal{V}^t > 0$, and (ii) assuming $\Gamma$ is the irreversible stoichiometric matrix of $\mathcal{R}$, $\mathcal{R}$ is semiconcordant iff $\Gamma \rst -\Gamma_l > 0$.
\end{lemma1}
\begin{proof}
We can assume, without loss of generality by Lemma~\ref{lemconcordrev}, that $\Gamma$ is the irreversible stoichiometric matrix of $\mathcal{R}$, $\Gamma_l$ is the corresponding left stoichiometric matrix, and $\mathcal{V} = \mathcal{Q}(\Gamma_l^t)$. By Corollary~\ref{corWRconcord} in Appendix~\ref{appWRconcord}, as $\mathcal{R}$ is weakly reversible there exists a positive diagonal matrix $D$ such that $\mathrm{det}_\Gamma(-\Gamma D \Gamma_l^t) > 0$. Note that $D \Gamma_l^t \in \mathcal{Q}'(\Gamma_l^t)$, and so certainly $\mathrm{det}_\Gamma(-\Gamma V) < 0$ for all $V \in \mathcal{Q}'(\Gamma_l^t)$ is not true. Equivalently, by Lemma~\ref{lemmain0}, $\Gamma \rst \mathcal{Q}'(-\Gamma_l) < 0$ is not true. As $\mathcal{Q}'(\Gamma_l^t) \subseteq \mathcal{Q}(\Gamma_l^t)$, certainly $\Gamma \rst \mathcal{Q}(-\Gamma_l) < 0$ is not true.

(i) By Lemma~\ref{lemconcordbasic} concordance is equivalent to $\Gamma \rst \mathcal{Q}(-\Gamma_l) > 0$ or $\Gamma \rst \mathcal{Q}(-\Gamma_l) < 0$. As weak reversibility rules out $\Gamma \rst \mathcal{Q}(-\Gamma_l) < 0$, the result follows. (ii) By definition, $\mathcal{R}$ is semiconcordant iff $\Gamma \rst -\Gamma_l > 0$ or $\Gamma \rst -\Gamma_l < 0$; equivalently, by Lemma~\ref{lemcompat_diag}, $\Gamma \rst \mathcal{Q}'(-\Gamma_l) > 0$ or $\Gamma \rst \mathcal{Q}'(-\Gamma_l) < 0$. As weak reversibility rules out $\Gamma \rst \mathcal{Q}'(-\Gamma_l) < 0$, semiconcordance is equivalent to $\Gamma \rst \mathcal{Q}'(-\Gamma_l) > 0$, namely $\Gamma \rst -\Gamma_l > 0$. 
\hfill \end{proof}

\subsection{Injectivity of CRNs with general kinetics: implications of accordance and concordance}
\label{secCRNgenkin}

In this section, we spell out the implications of concordance/accordance, and of their negations, on injectivity and the existence of multiple positive equilibria for a CRN with general kinetics. We begin by noting that some CRNs never admit positive equilibria for any reasonable kinetics.
\begin{def1}[CRNs which admit positive equilibria]
\label{defposeq}
Lemma~\ref{lemnoposeq} in Appendix~\ref{appadditional} tells us that if the irreversible stoichiometric matrix $\overline \Gamma$ of a CRN has no positive vector in its kernel, then the CRN admits no positive equilibria for any class of kinetics considered in this paper (the only assumption on the kinetics is that an irreversible reaction proceeds at positive speed if all reactants are present). In this case we simply say that the CRN {\em admits no positive equilibria}. If $\mathrm{ker}\,\overline{\Gamma}$ includes a positive vector, then the CRN has a positive equilibrium for some choice of, say, mass action kinetics (Lemma~\ref{lemnoposeq}). In this case we say the CRN {\em admits positive equilibria}.
\end{def1}

Lemma~\ref{thmCRNgen} and Remark~\ref{remgenkin} tell us that concordance of a CRN is sufficient for injectivity of the system in the sense of IC1, \apost{IC1}, or \apostt{IC1} (depending on the kinetics), while accordance is sufficient for injectivity in the sense of IC2, \apost{IC2}, or \apostt{IC2} (depending on the kinetics). Part (a) of the next theorem tells us that concordance is also {\em necessary} for injectivity (in the sense of IC1) of all CRNs with physical power-law kinetics (see also Theorem~4.11 in Shinar and Feinberg \cite{shinarfeinbergconcord1}). Further, a discordant CRN either admits no positive equilibria, or admits multiple positive equilibria on a stoichiometry class for some choice of power-law general kinetics. Part (b) informs us that the fully open extension of any CRN which fails to be accordant has multiple positive equilibria for some choice of power-law general kinetics.

\begin{thm}
\label{thmnoninjgen}
Consider a CRN $\mathcal{R}$. 
\begin{enumerate}[align=left,leftmargin=*]
\item[(a)] If $\mathcal{R}$ is concordant it satisfies \apostt{IC1} for general kinetics, IC1 for positive general kinetics, and \apost{IC1} for weak general kinetics. If $\mathcal{R}$ is discordant, there exists a choice of power-law general kinetics such that $\mathcal{R}$ fails condition IC1; further, either $\mathcal{R}$ admits no positive equilibria in the sense of Definition \ref{defposeq}, or there exists a choice of power-law general kinetics such that $\mathcal{R}$ has multiple positive equilibria on some stoichiometry class.
\item[(b)] If $\mathcal{R}$ is accordant it satisfies \apostt{IC2} for general kinetics, IC2 for positive general kinetics, and \apost{IC2} for weak general kinetics. If $\mathcal{R}$ is not accordant, there exists a choice of power-law general kinetics such that $\mathcal{R}$ fails condition IC2; further, we can choose power-law general kinetics and inflows and outflows, namely $c, q(\cdot)$ in (\ref{reacsys1}), such that the fully open system has multiple positive equilibria.
\end{enumerate}
\end{thm}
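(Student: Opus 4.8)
The plan is to read off the two injectivity (positive) halves directly from Lemma~\ref{thmCRNgen}, and to obtain the two negative halves by feeding the power-law characterisation of Theorem~\ref{gen_powlaw} into explicit constructions of equilibria. For the positive halves, note that by Lemma~\ref{lemconcordbasic} (in an irreversible presentation) or Lemma~\ref{lemconcordrev} in general, concordance of $\mathcal{R}$ is exactly the assertion that $\Gamma$ and its rate pattern $\mathcal{V}$ satisfy condition (4) or (5) of Lemma~\ref{thmCRNgen} (the two sign options differ only by the factor $(-1)^r$ absorbed into the ``or''), while accordance is exactly condition (2), namely $\Gamma \Bumpeq -\mathcal{V}^t$. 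Substituting these into Lemma~\ref{thmCRNgen}, and using Remark~\ref{remgenkin} to identify $\mathcal{V}$ with the rate pattern of positive general, weak general, or general kinetics, yields IC1/\apost{IC1}/\apostt{IC1} in part (a) and IC2/\apost{IC2}/\apostt{IC2} in part (b), according to the differentiability of $v$. So the positive halves are immediate.

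For the failure of IC1 in part (a): if $\mathcal{R}$ is discordant then by Lemma~\ref{lemconcordbasic}(2) there is $M \in \mathcal{Q}(\Gamma_l)$ for which $\mathcal{R}$ is not $M$-concordant, i.e. $\Gamma \rst M \not> 0$ and $\Gamma \rst M \not< 0$. Reading this through the equivalence (i)$\Leftrightarrow$(iii) of Theorem~\ref{gen_powlaw} (with exponent matrix $M^t$) produces rate constants $E$ such that $\Gamma E x^{M^t}$ fails IC1. To upgrade to power-law \emph{general} kinetics I would invoke the scaling invariance of Remark~\ref{remMscale}: replacing $M$ by $\alpha M$ for large $\alpha > 0$ leaves the failure of IC1 intact, keeps $\alpha M \in \mathcal{Q}(\Gamma_l)$ (positive scaling preserves the sign pattern), and forces every nonzero entry of $\alpha M$ to be $\geq 1$, so that $\Gamma E x^{(\alpha M)^t}$ is $C^1$ on $\mathbb{R}^n_{\geq 0}$ by Lemma~\ref{lemWhitney} and is thus an instance of power-law general kinetics. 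The same scaling device applies in part (b), since $M$-accordance ($\Gamma \Bumpeq -M$) is a condition on the signs of products of minors and is therefore also invariant under positive scaling of $M$.

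The two multiple-equilibria claims differ sharply in difficulty. For part (b) the construction is essentially free. First, non-accordance means $\mathcal{R}$ is not $M$-accordant for some $M \in \mathcal{Q}(\Gamma_l)$, so by Definition~\ref{defconcord}(5) and Remark~\ref{remPLJac} there is a \emph{realisable} power-law Jacobian $V \in \mathcal{Q}'(M^t)$ with $-\Gamma V$ not a $P_0$-matrix; by Remark~\ref{remP0} this gives an admissible outflow (some $D \in \mathcal{D}_n$, realised as $Dq$) making the fully open Jacobian $\Gamma V - D$ singular, and the converse Gale--Nikaido construction of \cite{banajicraciun2} (see also \cite{gale,parthasarathy,soule}) then turns a negative principal minor into two points $a \neq b$ of $\mathbb{R}^n_{\gg 0}$ with $\Gamma v(a) - q(a) = \Gamma v(b) - q(b)$. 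Now the constant inflow $c$ in (\ref{reacsys1}) can simply be chosen equal to this common value, so that both $a$ and $b$ become genuine equilibria of the fully open system; as (\ref{reacsys1}) carries no nontrivial conservation laws these are two positive equilibria, completing (b). Extracting the non-injective pair from the failure of $P_0$-ness is the one place in (b) where I expect to rely on prior work rather than a wholly elementary argument.

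The genuinely hard step is the multiple-equilibria claim in part (a), where there is no free inflow to absorb the common value and the two equilibria must lie on a \emph{single} stoichiometry class. The natural reduction is: from the non-injective pair $a \neq b$ with $a - b \in \mathrm{im}\,\Gamma$ and $\Gamma E\,a^{M^t} = \Gamma E\,b^{M^t}$, seek rate constants $k \gg 0$ making both points equilibria, i.e. $\mathrm{diag}(k)\,a^{M^t}, \mathrm{diag}(k)\,b^{M^t} \in \ker\Gamma$. The obstruction here is structural: equilibria of a power-law system are governed by the \emph{multiplicative} monomial structure, whereas membership of a stoichiometry class is an \emph{additive} condition on $\mathrm{im}\,\Gamma$, and rescaling coordinates to satisfy one requirement spoils the other. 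This is precisely why the hypothesis that $\mathcal{R}$ \emph{admits positive equilibria} (Definition~\ref{defposeq} and Lemma~\ref{lemnoposeq}, namely $\ker\overline\Gamma \cap \mathbb{R}^m_{\gg 0} \neq \emptyset$) is needed: it supplies a positive vector $\kappa \in \ker\Gamma$ against which the monomial map can be calibrated. My plan is to pass to logarithmic coordinates, where the two requirements become that two points of $\mathrm{im}\,M^t$ lie in a common translate of the log-image of $\ker\Gamma \cap \mathbb{R}^m_{\gg 0}$ while their exponentials remain $\mathrm{im}\,\Gamma$-compatible, and then to settle the resulting sign-vector feasibility problem using the toric/sign-compatibility machinery of \cite{muellerAll} together with the multistationarity results of \cite{feliuwiufSIADS2013} (whose Proposition~8.4 already underlies Theorem~\ref{gen_powlaw}); when $\ker\overline\Gamma$ contains no positive vector the feasibility problem is vacuous and the first alternative (no positive equilibria) holds instead. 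Rendering this final step genuinely elementary, rather than a citation, is where I expect the real work of the proof to lie.
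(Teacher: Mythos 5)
Your reduction of the positive halves to Lemma~\ref{thmCRNgen} (via Lemmas~\ref{lemconcordbasic}~and~\ref{lemconcordrev} and Remark~\ref{remgenkin}), and your derivation of the failure of IC1 in part (a) from discordance via Theorem~\ref{gen_powlaw} plus the scaling device of Remark~\ref{remMscale}, coincide with the paper's own argument and are fine. The problems are in the two multistationarity claims. In part (b), your first step fails as stated: from a single singular fully open Jacobian $\Gamma V - D$ (with $V \in \mathcal{Q}'(M^t)$, $D \in \mathcal{D}_n$) you cannot conclude that the corresponding power-law system is non-injective --- a singular Jacobian at one point does not yield a non-injective pair for that same function, and the ``converse Gale--Nikaido'' constructions you cite from \cite{banajicraciun2} concern families whose derivatives sweep out full qualitative classes (general kinetics), not the much thinner semiclass $\mathcal{Q}'(M^t)$ explored by fixed power-law kinetics. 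The family-level converse for semiclasses is precisely the nontrivial content of Theorem~\ref{gen_powlaw}, and the paper obtains the failure of IC2 by applying it to the augmented pair $\tilde\Gamma = [\Gamma\,|\,{-I}]$, $\tilde M = [M'\,|\,I]$ using Lemma~\ref{lembump} and Equation~(\ref{eqbump}) (this is Lemma~\ref{lem_noninj}). Second, choosing the inflow ``equal to the common value'' ignores that (\ref{reacsys1}) requires $c \in \mathbb{R}^n_{\geq 0}$: the common value of $\Gamma v - q$ at a non-injective pair has no preferred sign. This slip is repairable (add a nonnegative constant to $q$), but it is exactly the constraint the paper's proof is built around: it chooses $z$ \emph{first}, so that $c = -\tilde\Gamma z \gg 0$ is achievable thanks to the $-I$ block of $\tilde\Gamma$, and it is why Definition~\ref{defstrongincompat} carries the condition $\tilde{A}D\mathbf{1} \leq 0$.

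The decisive gap is the multiple-equilibria claim of part (a), which you leave to the machinery of \cite{muellerAll} and \cite{feliuwiufSIADS2013}; but this is the step the theorem asks to be proved, and no self-contained argument is offered. You diagnose the obstruction correctly --- with the exponent matrix fixed, tuning rate constants to turn a prescribed non-injective pair into equilibria is generally infeasible --- but the resolution is elementary and lies in the freedom you did not exploit: perturb the \emph{exponent} matrix within its semiclass, keeping the rate constants fixed. Concretely (the paper's construction): given $0 \ll z \in \mathrm{ker}\,\Gamma$, set $E_{ii} = z_i$, so that $x = \mathbf{1}$ satisfies $\Gamma E\exp(M\ln x) = \Gamma E \mathbf{1} = 0$ for \emph{every} exponent matrix $M$ simultaneously; discordance plus Theorem~\ref{gen_powlaw} supplies $M_0 \in \mathcal{Q}'(M'^t)$ and $0 \neq \Delta x \in \mathrm{im}\,\Gamma$ with $\Gamma M_0 \Delta x = 0$; after scaling $\Delta x$ so that $|\Delta x_i| < 1$ and $|(E^{-1}M_0\Delta x)_i| < 1$, explicit diagonal matrices $D_1 \in \mathcal{D}_m$, $D_2 \in \mathcal{D}_n$ make the exponent matrix $M = D_1^{-1}M_0D_2^{-1} \in \mathcal{Q}(\Gamma_l^t)$ satisfy $\exp(M\ln y) = E^{-1}M_0\Delta x + \mathbf{1}$ at $y = x + \Delta x \gg 0$, whence $\Gamma E\exp(M\ln y) = \Gamma M_0\Delta x + \Gamma E\mathbf{1} = 0$. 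Thus $x$ and $y$ are distinct positive equilibria on a single stoichiometry class, and the rescaling $M_0 \to \lambda M_0$, $\Delta x \to \Delta x/\lambda$ makes the nonzero entries of $M$ at least $1$, yielding power-law general kinetics. The same construction applied to $\tilde\Gamma$, $\tilde M$ with $c = -\tilde\Gamma z \gg 0$ finishes part (b), so nothing beyond the paper's own lemmas is required.
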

\begin{proof}
Let $\mathcal{R}$ have irreversible stoichiometric matrix $\Gamma \in \mathbb{R}^{n \times m}$ with rank $r \geq 1$, and corresponding left stoichiometric matrix $\Gamma_l$.

(a) We already know from Lemma~\ref{thmCRNgen} with $\mathcal{V} = \mathcal{Q}(\Gamma_l^t)$ (see Remark~\ref{remgenkin}) that if $\mathcal{R}$ is concordant, then it satisfies condition \apostt{IC1} for general kinetics, IC1 for positive general kinetics, and hence, via Lemma~\ref{lemICrels}, \apost{IC1} for weak general kinetics. Suppose $\mathcal{R}$ is discordant so there exists $M' \in \mathcal{Q}(\Gamma_l)$ such that $\Gamma\rst -M' \not > 0$ and $\Gamma\rst -M' \not < 0$. We can assume without loss of generality that nonzero entries of $M'$ are greater than or equal to $1$ (see Remark~\ref{remMscale}). Applying Theorem~\ref{gen_powlaw}, we can choose $E \in \mathcal{D}_m$ such that $\Gamma E \mathrm{exp}(M'^t\ln x)$ fails condition IC1.

Now suppose $\mathcal{R}$ admits positive equilibria (Definition~\ref{defposeq}), namely there exists $0 \ll z \in \mathrm{ker}\,\Gamma$. Define $x = \mathbf{1}$ and $E \in \mathcal{D}_m$ via $E_{ii} = z_i$ so that for any $M \in \mathbb{R}^{m \times n}$, $\Gamma E \exp(M \ln x) = \Gamma E \mathbf{1} = 0$. Choose $M' \in \mathcal{Q}(\Gamma_l)$ as above. By Theorem~\ref{gen_powlaw}, there then exists $M_0 \in \mathcal{Q}'(M') \subseteq \mathcal{Q}(\Gamma_l)$, and nonzero $\Delta x \in \mathrm{im}\,\Gamma$ such that $\Gamma M_0 \Delta x = 0$. Assume, by scaling $\Delta x$ if necessary, that for each $i$, $|\Delta x_i| < 1$ and $|(E^{-1}M_0\Delta x)_i| < 1$.

Define $D_2 = \int_{0}^1 D_{1/(x + t\Delta x)}\,\mathrm{d}t \in \mathcal{D}_n$ as in the proof of Theorem~\ref{gen_powlaw}. Observe that $D_2$ is well defined by the assumption that $|\Delta x_i| < 1$, and that $y = x + \Delta x = \exp(D_2\Delta x) \gg 0$. Define the diagonal matrix $D_1$ via
\[
(D_1)_{ii} = \left\{\begin{array}{ll}\frac{(M_0\Delta x)_i}{\ln\left[(E^{-1}M_0\Delta x)_i + 1\right]} & \mbox{if }(M_0\Delta x)_i \neq 0\\1 & \mbox{otherwise,}\end{array}\right.
\]
for each $i = 1, \ldots, m$. Observe that $D_1$ is well defined as $|(E^{-1}M_0\Delta x)_i| < 1$, and that $D_1 \in \mathcal{D}_m$. With $M = D_1^{-1}M_0D_2^{-1}$, we can also compute that 
\[
\exp(M \ln y) = \exp(D_1^{-1}M_0\Delta x) = (E^{-1}M_0\Delta x) + \mathbf{1}\,.
\]
So $\Gamma E \exp(M \ln y) = \Gamma M_0\Delta x + \Gamma E\mathbf{1} = 0$. Since $x, y \gg 0$, $x\simneq^\Gamma\! y$, $\Gamma E \exp(M \ln x) = 0$ and $\Gamma E \exp(M \ln y) = 0$, we have shown that if $\mathcal{R}$ is discordant and admits positive equilibria, then it admits multiple positive equilibria on some stoichiometry class for $M$-power-law kinetics where $M \in \mathcal{Q}(\Gamma_l)$. To see that we can, in fact, make the nonzero entries of $M$ as large as we like, fix $\lambda > 1$ and consider the transformation $M_0 \to \lambda M_0$, $\Delta x \to \frac{1}{\lambda}\Delta x$. Clearly $\Delta x \ll \mathbf{1}$ remains true; $M_0\Delta x$, $E^{-1}M_0\Delta x$, and $D_1$ are unchanged; and $D_2 = \int_{0}^1 D_{1/(x + t\Delta x/\lambda)}\,\mathrm{d}t$ approaches the identity as $\lambda \to \infty$. By choosing $\lambda$ large, the nonzero entries of $M = D_1^{-1}M_0 D_2^{-1}$ can be made as large as we like.

(b) From Lemma~\ref{thmCRNgen} and Remark~\ref{remgenkin} it follows that if $\Gamma \Bumpeq \mathcal{Q}(-\Gamma_l)$ then $\mathcal{R}$ satisfies \apostt{IC2} for general kinetics, IC2 for positive general kinetics and hence, by Lemma~\ref{lemICrels}, \apost{IC2} for weak general kinetics. Suppose on the contrary that $\Gamma \not \Bumpeq -M'$ for some $M' \in \mathcal{Q}(\Gamma_l)$. Without loss of generality we can assume that nonzero entries of $M'$ are greater than or equal to $1$, as the relation $\Gamma \not \Bumpeq -M'$ is invariant under positive scaling of $M'$. Define $\tilde \Gamma = [\Gamma\,|\,{-I}]$, $\tilde M = [M'\,|\,I]$, so that, by Lemma~\ref{lembump}, $\tilde \Gamma \nst (-\tilde M) \not > 0$. Also, by Equation (\ref{eqbump}), $\tilde \Gamma \nst (-\tilde M) \not < 0$. By Theorem~\ref{gen_powlaw}, we can choose $E \in \mathcal{D}_{n+m}$ such that $\tilde{\Gamma} E \mathrm{exp}(\tilde{M}^t\ln x)$ fails IC1, namely $\Gamma E' \mathrm{exp}(M'^t\ln x)$ fails IC2, where $E' = E(\mathbf{m}) \in \mathcal{D}_m$.

We now follow the approach in part (a). First, choose $z \gg 0$ s.t. $c \stackrel{\text{\tiny def}}{=} -\tilde \Gamma z \gg 0$, possible by the structure of $\tilde \Gamma$. Let $x = \mathbf{1}$, so that $\exp(M\ln\,x) = \mathbf{1}$ for any $M$. Define $E \in \mathcal{D}_{m+n}$ via $E_{ii} = z_i$, so that $\tilde \Gamma E\mathbf{1} = -c$.

Since $\tilde \Gamma \nst (-\tilde M) \not > 0$ and $\tilde \Gamma \nst (-\tilde M) \not < 0$, by Theorem~\ref{gen_powlaw} there exists $M_0 \in \mathcal{Q}'(\tilde M)$ and $0 \neq \Delta x \in \mathrm{im}\,\tilde \Gamma = \mathbb{R}^n$ such that $\tilde \Gamma M_0 \Delta x = 0$. By scaling $\Delta x$ if necessary, assume for each $i$ that $|\Delta x_i| < 1$ and that $|(E^{-1}M_0\Delta x)_i| < 1$. As above, define $D_2 = \int_{0}^1 D_{1/(x + t\Delta x)}\,\mathrm{d}t \in \mathcal{D}_n$ and $D_1 \in \mathcal{D}_{n+m}$ via
\[
(D_1)_{ii} = \left\{\begin{array}{ll}\frac{(M_0\Delta x)_i}{\ln\left[(E^{-1}M_0\Delta x)_i + 1\right]} & \mbox{if }(M_0\Delta x)_i \neq 0\\1 & \mbox{otherwise.}\end{array}\right.
\]
Observe that $y = x + \Delta x = \exp(D_2\Delta x) \gg 0$ as $|\Delta x_i| < 1$, and the assumption that $|(E^{-1}M_0\Delta x)_i| < 1$ ensures that $D_1$ is well defined. With $M = D_1^{-1}M_0D_2^{-1}$, we can compute that $\exp(M \ln y) = \exp(D_1^{-1}M_0 \Delta x) = (E^{-1}M_0\Delta x) + \mathbf{1}$ and so $\tilde \Gamma E \exp(M \ln y) = \tilde \Gamma M_0 \Delta x + \tilde \Gamma E\mathbf{1} = 0 -c = -c$.

We see that $c + \tilde \Gamma E\exp(M \ln y) = c + \tilde \Gamma E \exp(M \ln x) = 0$. Exactly as in part (a), we can scale $M_0$ and $\Delta x$ so as to maintain $M_0 \Delta x$ constant, and thus make the nonzero entries of $M$ as large as we like.
\hfill \end{proof}

Weakly reversible CRNs admit positive equilibria, so we have:
\begin{cor}
\label{corWR}
A weakly reversible CRN $\mathcal{R}$ has no more than one positive equilibrium on each stoichiometry class for all choices of physical power-law kinetics if and only if it is concordant.
\end{cor}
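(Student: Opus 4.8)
The plan is to derive both directions of the equivalence directly from Theorem~\ref{thmnoninjgen}(a), treating physical power-law kinetics as the special case of positive general kinetics for which that theorem is sharp, and invoking weak reversibility only to rule out the degenerate possibility that $\mathcal{R}$ admits no positive equilibria at all. No new machinery is needed; the argument is essentially a matter of lining up the quantifier ``for all physical power-law kinetics'' with the hypotheses and conclusions already available.

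For the direction ``concordant $\Rightarrow$ at most one positive equilibrium per stoichiometry class'', I would first observe that physical power-law kinetics is a subclass of positive general kinetics (PPLK $\subseteq$ WGK $\subseteq$ GK$_+$ by Remark~\ref{remgenMA}), so Theorem~\ref{thmnoninjgen}(a) guarantees that a concordant $\mathcal{R}$ satisfies claim IC1 for every physical power-law kinetics. I would then unwind what IC1 says about equilibria: if $x,y \in \mathbb{R}^n_{\gg 0}$ were distinct equilibria lying on a common stoichiometry class, then $x \simneq^\Gamma\! y$ while $\Gamma v(x) = \Gamma v(y) = 0$, which directly contradicts IC1. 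Hence no two distinct positive equilibria can be compatible, giving the claimed uniqueness.

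For the converse I would argue contrapositively. Suppose $\mathcal{R}$ is discordant. The linchpin observation is that a weakly reversible CRN admits positive equilibria, since its irreversible stoichiometric matrix $\overline{\Gamma}$ has a positive vector in its kernel (the remark preceding the corollary, via Definition~\ref{defposeq} and Lemma~\ref{lemnoposeq}); this eliminates the first horn of the dichotomy in Theorem~\ref{thmnoninjgen}(a). Consequently that theorem forces the second horn: there is a choice of power-law general kinetics for which $\mathcal{R}$ has two distinct positive equilibria on some stoichiometry class. Since power-law general kinetics is itself a subclass of physical power-law kinetics (PLGK $\subseteq$ PPLK), this exhibits a physical power-law kinetics violating the uniqueness property, which completes the contrapositive and hence the proof.

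The step I would flag as the real content — rather than an obstacle per se — is precisely this use of weak reversibility to rule out the empty-equilibrium-set case, since it is what converts the disjunctive conclusion of Theorem~\ref{thmnoninjgen}(a) into the clean ``multiple positive equilibria'' alternative. Everything else is bookkeeping on the inclusions among the kinetic classes in Remark~\ref{remgenMA}, ensuring that the ``for all physical power-law kinetics'' quantifier matches the positive-general-kinetics hypothesis of Theorem~\ref{thmnoninjgen}(a) on one side and its power-law-general-kinetics conclusion on the other.
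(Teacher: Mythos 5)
Your proposal is correct and follows essentially the same route as the paper: both directions come from Theorem~\ref{thmnoninjgen}(a), with weak reversibility used exactly as you flag it, namely to produce a positive vector in $\mathrm{ker}\,\overline{\Gamma}$ and thereby discard the ``admits no positive equilibria'' horn of the discordant case. The paper's proof is a one-line version of your argument; your extra bookkeeping (unwinding IC1 at equilibria, and the inclusions $\mathrm{PLGK} \subseteq \mathrm{PPLK} \subseteq \mathrm{GK}_+$) just makes explicit what the paper leaves implicit.
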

\begin{proof}
This is immediate from Theorem~\ref{thmnoninjgen}(a) once we note that weak reversibility easily implies the existence of a positive vector in the kernel of $\overline \Gamma$ the irreversible stoichiometric matrix of $\mathcal{R}$. \hfill
\end{proof}

We summarise in Corollary~\ref{corequivs} a number of equivalences which follow from Theorem~\ref{thmnoninjgen} and earlier results with little effort, noting in advance that the equivalence of (1) and (2) in Corollary \ref{corequivs} reads almost identically to Theorem 4.11 in Shinar and Feinberg \cite{shinarfeinbergconcord1} (with positive general kinetics replaced by ``weakly monotonic kinetics''). Recall that given a function $f(x) = \Gamma v(x)$ as is (\ref{reacsys}), IC1 means injectivity of $f$ on each positive stoichiometry class, \apost{IC1} means that $f$ can only take the same value at two distinct points on a stoichiometry class if they are both on $\partial \mathbb{R}^n_{\geq 0}$, and \apostt{IC1} means that $f$ can only take the same value at distinct points on a stoichiometry class if they share a facet of $\partial \mathbb{R}^n_{\geq 0}$.
\begin{cor}
\label{corequivs}
The following are equivalent for a CRN $\mathcal{R}$:
\begin{enumerate}[align=left,leftmargin=*]
\item $\mathcal{R}$ is concordant.
\item $\mathcal{R}$ satisfies IC1 for all positive general kinetics. 
\item $\mathcal{R}$ satisfies IC1 for all weak general kinetics. 
\item $\mathcal{R}$ satisfies \apost{IC1} for all weak general kinetics.
\item $\mathcal{R}$ satisfies IC1 for all physical power-law kinetics.
\item $\mathcal{R}$ satisfies \apost{IC1} for all physical power-law kinetics. 
\item $\cal R$ satisfies IC1 for all general kinetics.
\item $\cal R$ satisfies \apost{IC1} for all general kinetics.
\item  $\cal R$ satisfies \apostt{IC1} for all general kinetics.
\end{enumerate}
\end{cor}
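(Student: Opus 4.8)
The plan is to treat Theorem~\ref{thmnoninjgen}(a) as the sole piece of genuine content and assemble the nine-way equivalence by bookkeeping, exploiting two elementary monotonicity principles. The first is that if $\mathcal{A}\subseteq\mathcal{B}$ are two classes of kinetics, then ``$\mathcal{R}$ satisfies a given injectivity claim for all kinetics in $\mathcal{B}$'' trivially implies the same claim for all kinetics in $\mathcal{A}$; the relevant inclusions among MAK, PLGK, PPLK, GK, WGK and GK$_+$ are recorded in Remark~\ref{remgenMA}. The second is the implication chain \apostt{IC1}~$\Rightarrow$~\apost{IC1}~$\Rightarrow$~IC1 from Lemma~\ref{lemICrels}. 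I would first prove (1)$\Rightarrow$(2)--(9), then prove $\neg$(1)$\Rightarrow\neg$(2)--(9); together these give the full equivalence.

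For the forward direction, Theorem~\ref{thmnoninjgen}(a) directly supplies three ``anchor'' consequences of concordance: \apostt{IC1} for general kinetics (this is (9)), IC1 for positive general kinetics (this is (2)), and \apost{IC1} for weak general kinetics (this is (4)). Everything else follows by the two monotonicity principles. Since GK$_+$ is the largest class, (2) yields IC1 for all WGK, all PPLK and all GK, i.e. (3), (5) and (7). Since PPLK~$\subseteq$~WGK, claim (4) yields \apost{IC1} for all PPLK, i.e. (6). Finally, applying Lemma~\ref{lemICrels} to (9) gives \apost{IC1} for all general kinetics, i.e. (8). This exhausts (2)--(9).

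For the reverse direction I would argue contrapositively. If $\mathcal{R}$ is discordant, Theorem~\ref{thmnoninjgen}(a) produces a single choice of \emph{power-law general} kinetics for which $\mathcal{R}$ fails IC1. Because PLGK is contained in each of GK$_+$, WGK, PPLK and GK, this one counterexample witnesses the failure of IC1 for some member of every class appearing in (2), (3), (5) and (7); hence each of those conditions fails. Moreover the same counterexample lies in the classes relevant to the boundary-strengthened claims, and since \apost{IC1} and \apostt{IC1} are by Lemma~\ref{lemICrels} strictly stronger than IC1, a function that already violates IC1 \emph{a fortiori} violates \apost{IC1} and \apostt{IC1}; thus (4), (6), (8) and (9) fail as well. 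Taking contrapositives, each of (2)--(9) implies concordance, closing the cycle.

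The main obstacle is not difficulty but care: one must keep straight the direction of implication under class inclusion (a larger kinetic class satisfying a claim is the \emph{stronger} assertion) and verify that the single discordance counterexample produced by Theorem~\ref{thmnoninjgen}(a) genuinely lies in every class named in (2)--(9). The only conceptually nontrivial manoeuvre is the handling of the boundary-strengthened conditions \apost{IC1} and \apostt{IC1}: in the forward direction they descend from the anchors via Lemma~\ref{lemICrels}, while in the reverse direction they fail precisely because they imply the already-failing IC1. No new estimates or constructions are required beyond those already established.
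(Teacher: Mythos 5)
Your proposal is correct and takes essentially the same route as the paper: both treat Theorem~\ref{thmnoninjgen}(a) as the only substantive input and assemble the nine-way equivalence from the kinetics-class inclusions of Remark~\ref{remgenMA} together with the implications \apostt{IC1}~$\Rightarrow$~\apost{IC1}~$\Rightarrow$~IC1 of Lemma~\ref{lemICrels}, the paper merely organising the bookkeeping as a cycle of implications where you use a hub-and-spoke contrapositive argument centred on (1). The only step you omit is the paper's opening reduction, via Lemma~\ref{lemconcordrev}, to the case where $\mathcal{R}$ is irreversible --- needed so that conditions (5) and (6), involving power-law kinetics (defined only for irreversible systems), make sense for a general $\mathcal{R}$ --- but this is a definitional point rather than a mathematical gap.
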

\begin{proof}
First, by Lemma~\ref{lemconcordrev}, we may assume without loss of generality that $\mathcal{R}$ is a system of irreversible reactions, namely, any reversible reaction can be treated as a pair of irreversible ones. (1) $\Rightarrow$ (2) follows from Theorem~\ref{thmnoninjgen}(a). (2) $\Rightarrow$ (3), (3) $\Rightarrow$ (5), and (4) $\Rightarrow$ (6) are immediate as weak general kinetics is a special case of positive general kinetics, and physical power-law kinetics is a special case of weak general kinetics (Remark~\ref{remgenMA}). (3) $\Leftrightarrow$ (4) and (5) $\Leftrightarrow$ (6) follow from Lemma~\ref{lemICrels}. (1) $\Rightarrow$ (9) follows from Theorem~\ref{thmnoninjgen}(a). (9) $\Rightarrow$ (8) $\Rightarrow$ (7) is immediate. Finally, (5) $\Rightarrow$ (1) and (7) $\Rightarrow$ (1) follow from Theorem~\ref{thmnoninjgen}(a) as power-law general kinetics is a case of both physical power-law kinetics and general kinetics.
\hfill \end{proof}

We have an analogous, but stronger, corollary for fully open systems. The result is stronger because failure of accordance is equivalent to the existence of multiple positive equilibria in the fully open system for some choice of physical power-law kinetics, without any additional assumptions. Recall that given a function $f(x) = \Gamma v(x) + c - q(x)$ as in (\ref{reacsys1}), IC2 means injectivity of $f$ on $\mathbb{R}^n_{\gg 0}$, \apost{IC2} means that $f$ can only take the same value at two distinct points of $\mathbb{R}^n_{\geq 0}$ if they are both on $\partial \mathbb{R}^n_{\geq 0}$, and \apostt{IC2} means injectivity of $f$ on $\mathbb{R}^n_{\geq 0}$.
\begin{cor}
\label{corequivs1}
The following are equivalent for a CRN $\mathcal{R}$ with fully open extension $\mathcal{R}_o$:
\begin{enumerate}[align=left,leftmargin=*]
\item $\mathcal{R}$ is accordant.
\item $\mathcal{R}$ satisfies IC2 for all positive general kinetics. 
\item $\mathcal{R}$ satisfies IC2 for all weak general kinetics. 
\item $\mathcal{R}$ satisfies \apost{IC2} for all weak general kinetics.
\item $\mathcal{R}$ satisfies IC2 for all physical power-law kinetics.
\item $\mathcal{R}$ satisfies \apost{IC2} for all physical power-law kinetics.
\item $\mathcal{R}_o$ forbids multiple positive equilibria for all physical power-law kinetics.
\item  $\cal R$ satisfies IC2 for all general kinetics.
\item  $\cal R$ satisfies \apost{IC2} for all general kinetics.
\item  $\cal R$ satisfies \apostt{IC2} for all general kinetics.
\item $\mathcal{R}_o$ forbids multiple positive equilibria for all general kinetics.
\end{enumerate}
\end{cor}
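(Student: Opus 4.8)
The plan is to mirror the proof of Corollary~\ref{corequivs}, replacing concordance and the IC1-family by accordance and the IC2-family, and to exploit the sharper form of Theorem~\ref{thmnoninjgen}(b), which (unlike the concordance case) needs no auxiliary hypothesis guaranteeing the existence of positive equilibria. As a first step I would invoke Lemma~\ref{lemconcordrev} to reduce without loss of generality to the case where $\mathcal{R}$ is a system of irreversible reactions with irreversible stoichiometric matrix $\Gamma$, left stoichiometric matrix $\Gamma_l$, and rate pattern $\mathcal{V} = \mathcal{Q}(\Gamma_l^t)$; accordance is then simply $\Gamma \Bumpeq \mathcal{Q}(-\Gamma_l)$, and the vector field $\Gamma v$ and its fully open extension $c + \Gamma v - q$ are unaffected by this choice of representation. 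The whole argument is then an assembly of implications organised into two cycles passing through condition (1), together with a few ``horizontal'' equivalences relating primed and unprimed claims.

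For the forward direction I would feed accordance into Theorem~\ref{thmnoninjgen}(b), which yields directly that (1) implies IC2 for positive general kinetics (condition (2)), \apost{IC2} for weak general kinetics (condition (4)), and \apostt{IC2} for general kinetics (condition (10)). The remaining downstream injectivity claims then come for free. The containments of kinetic classes recorded in Remark~\ref{remgenMA} give (2) $\Rightarrow$ (3) $\Rightarrow$ (5), since physical power-law kinetics $\subseteq$ weak general kinetics $\subseteq$ positive general kinetics, and Lemma~\ref{lemICrels} gives (10) $\Rightarrow$ (9) $\Rightarrow$ (8) via \apostt{IC2} $\Rightarrow$ \apost{IC2} $\Rightarrow$ IC2. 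Because $v$ is continuous on $\mathbb{R}^n_{\geq 0}$ for both weak general and physical power-law kinetics, Lemma~\ref{lemICrels} also supplies the equivalences (3) $\Leftrightarrow$ (4) and (5) $\Leftrightarrow$ (6). Finally, injectivity of $\Gamma v - q$ on $\mathbb{R}^n_{\gg 0}$ (claim IC2) immediately forbids two distinct positive zeros of $c + \Gamma v - q$, so (5) $\Rightarrow$ (7) and (8) $\Rightarrow$ (11).

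The reverse, and only substantive, direction is to close each cycle by proving (7) $\Rightarrow$ (1) and (11) $\Rightarrow$ (1), which I would do by contraposition using the ``further'' clause of Theorem~\ref{thmnoninjgen}(b): if $\mathcal{R}$ fails to be accordant, there is a choice of power-law general kinetics together with inflows and outflows for which the fully open system has multiple positive equilibria. The observation that makes this clean is that power-law general kinetics is a subclass of both physical power-law kinetics and general kinetics (Remark~\ref{remgenMA}), so the constructed counterexample simultaneously violates (7) and (11), and the two cycles together then force all eleven conditions to be equivalent. I expect the main point worth flagging — rather than a genuine obstacle, since the hard construction lives in Theorem~\ref{thmnoninjgen}(b) — to be exactly why no nondegeneracy hypothesis is needed here, in contrast with Corollary~\ref{corequivs}: for the fully open system one is free to tune the inflow vector $c$ and outflow $q$ so that a prescribed pair of points becomes a pair of equilibria, so existence of multiple positive equilibria is genuinely equivalent to failure of IC2 rather than merely implied by it. Everything else is routine bookkeeping.
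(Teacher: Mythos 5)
Your proposal is correct and follows essentially the same route as the paper's proof: reduction to the irreversible case via Lemma~\ref{lemconcordrev}, the forward implications from accordance via Theorem~\ref{thmnoninjgen}(b), the downward chains through the kinetic-class containments of Remark~\ref{remgenMA} together with the primed/unprimed equivalences from Lemma~\ref{lemICrels}, and closure of both cycles at (7)~$\Rightarrow$~(1) and (11)~$\Rightarrow$~(1) by the multistationarity construction in Theorem~\ref{thmnoninjgen}(b), using that power-law general kinetics is a subclass of both physical power-law and general kinetics. The minor organisational differences (deriving (1)~$\Rightarrow$~(4) directly rather than via (3)~$\Leftrightarrow$~(4), and using (5)~$\Rightarrow$~(7) in place of (6)~$\Rightarrow$~(7)) are immaterial.
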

\begin{proof}
By Lemma~\ref{lemconcordrev} we may assume without loss of generality that $\mathcal{R}$ is a system of irreversible reactions.  (1) $\Rightarrow$ (2) follows from Theorem~\ref{thmnoninjgen}(b). (2) $\Rightarrow$ (3), (3) $\Rightarrow$ (5), and (4) $\Rightarrow$ (6) are immediate as weak general kinetics is a special case of positive general kinetics, and physical power-law kinetics is a special case of weak general kinetics (Remark~\ref{remgenMA}). (3) $\Leftrightarrow$ (4) and (5) $\Leftrightarrow$ (6) follow from Lemma~\ref{lemICrels}. (6) $\Rightarrow$ (7) is immediate. (7) $\Rightarrow$ (1) follows from Theorem~\ref{thmnoninjgen}(b) as power-law general kinetics is a special case of physical power-law kinetics. (1) $\Rightarrow$ (10) follows from Theorem~\ref{thmnoninjgen}(b). (10) $\Rightarrow$ (9) $\Rightarrow$ (8) $\Rightarrow$ (11) is immediate. (11) $\Rightarrow$ (1) follows from Theorem~\ref{thmnoninjgen}(b) as power-law general kinetics is a special case of general kinetics. 
\hfill \end{proof}

\begin{remark}[Concordance and weak reversibility imply persistence] 
\label{concordtopersist}
In addition to discussing the implications of concordance for injectivity, Shinar and Feinberg \cite{shinarfeinbergconcord1} proved the remarkable result that if a concordant network is weakly reversible, then it has no critical siphons, and is ``structurally persistent'' (see Appendix~\ref{appadditional}) under very weak assumptions on the kinetics. This result is reproved in elementary linear algebraic/combinatorial ways in Appendix~\ref{appWRconcord}. It follows immediately that a weakly reversible, concordant CRN with bounded stoichiometry classes has precisely one equilibrium on each nontrivial stoichiometry class, and this equilibrium is positive.
\end{remark}

{\bf Injectivity of a CRN with general kinetics and its fully open extension.} An important question is when injectivity of the fully open extension of a CRN in the sense of IC2 (resp., \apost{IC2}, resp., \apostt{IC2}) implies injectivity of the original CRN in the sense of IC1 (resp., \apost{IC1}, resp., \apostt{IC1}). This question has been answered in the results above, but we state the conclusion explicitly for completeness:
\begin{cor}
\label{corstructdiscord}
(i) An accordant CRN is concordant if and only if it is not structurally discordant (Definition~\ref{defconcord}). (ii) If a CRN satisfies IC2 (resp., \apost{IC2}, resp., \apostt{IC2}) for positive general kinetics (resp., weak general kinetics, resp., general kinetics), then it satisfies IC1 (resp., \apost{IC1}, resp., \apostt{IC1}) for positive general kinetics (resp., weak general kinetics, resp., general kinetics) if and only if it is not structurally discordant. (iii) A weakly reversible, accordant CRN is concordant. 
\end{cor}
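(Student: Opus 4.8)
The plan is to prove part (i) directly and then derive parts (ii) and (iii) from it with essentially no extra work. Throughout, fix the irreversible stoichiometric matrix $\Gamma \in \mathbb{R}^{n\times m}$ of rank $r$ and let $\mathcal{V} \subseteq \mathbb{R}^{m\times n}$ denote the rate pattern (the set of possible $Dv$), so that by Definition~\ref{defconcordbasic} together with Lemma~\ref{lemconcordrev}, concordance reads $\Gamma \rst -\mathcal{V}^t > 0$, structural discordance reads $\Gamma \rst -\mathcal{V}^t = 0$, and accordance reads $\Gamma \Bumpeq -\mathcal{V}^t$ (with $\rst$ taken at order $r = \mathrm{rank}\,\Gamma$ in the first two). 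Note that $\mathcal{V}$, and hence $-\mathcal{V}^t$, is a matrix-pattern (Definition~\ref{defratepattern}), which is what will let us invoke Corollary~\ref{corcompattoconcord}.

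For part (i), the ``only if'' direction is immediate and uses nothing about accordance: if $\mathcal{R}$ is concordant then $\Gamma \rst -V^t \neq 0$ for every $V \in \mathcal{V}$, so (as $\mathcal{V}\neq\emptyset$) it is impossible that all these vanish, and $\mathcal{R}$ is not structurally discordant. The substantive ``if'' direction is exactly the ``bridge'' flagged in Remark~\ref{remCRNtoopen}. Assume $\mathcal{R}$ is accordant and not structurally discordant. The latter supplies some $V_1 \in \mathcal{V}$ with $\Gamma \rst -V_1^t \neq 0$, while accordance gives $\Gamma \Bumpeq -V_1^t$ and in particular $\Gamma \rst -V_1^t \geq 0$ at order $r$; together these force $\Gamma \rst -V_1^t > 0$. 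Now apply Corollary~\ref{corcompattoconcord} with $A = \Gamma$ and $\mathcal{B} = -\mathcal{V}^t$: accordance is precisely $\Gamma \Bumpeq -\mathcal{V}^t$, and we have exhibited $B_1 = -V_1^t \in \mathcal{B}$ with $\Gamma \rst B_1 > 0$, so the corollary upgrades this to $\Gamma \rst -\mathcal{V}^t > 0$, which is concordance.

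Part (ii) is then a translation of part (i) through equivalences already in hand. For positive general kinetics, Corollary~\ref{corequivs1} identifies the hypothesis ``$\mathcal{R}$ satisfies IC2'' with accordance and Corollary~\ref{corequivs} identifies the conclusion ``$\mathcal{R}$ satisfies IC1'' with concordance, so under the standing hypothesis of accordance, part (i) gives exactly ``concordant iff not structurally discordant''. The weak-general-kinetics case (with \apost{IC2} and \apost{IC1}) and the general-kinetics case (with \apostt{IC2} and \apostt{IC1}) follow identically using the corresponding items of Corollaries~\ref{corequivs} and \ref{corequivs1}. For part (iii), it suffices by part (i) to observe that a weakly reversible CRN is never structurally discordant: by Corollary~\ref{corWRconcord} (Appendix~\ref{appWRconcord}) weak reversibility produces a positive diagonal $D \in \mathcal{D}_m$ with $\mathrm{det}_\Gamma(-\Gamma D \Gamma_l^t) > 0$, and since $D\Gamma_l^t \in \mathcal{Q}'(\Gamma_l^t)$ lies in the rate pattern, some member of $\mathcal{V}$ has nonvanishing reduced determinant, ruling out structural discordance; part (i) applied to the accordant, non-structurally-discordant CRN then yields concordance.

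I expect the only real obstacle to lie in the sign bookkeeping of part (i): one must be careful that accordance delivers the inequality $\Gamma \rst -V_1^t \geq 0$ at order $r$ (and not merely the compatibility relation $\Bumpeq$), so that a single nonvanishing minor sum upgrades to strict positivity, and one must confirm that $-\mathcal{V}^t$ genuinely is a matrix-pattern so that Corollary~\ref{corcompattoconcord} is applicable. Once these two points are checked, everything else is a direct citation of the already-established equivalences.
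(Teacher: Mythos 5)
Your proof is correct and follows essentially the same route as the paper: part (i) rests on Corollary~\ref{corcompattoconcord} (which is exactly what underlies the implication [(2) and (3)] $\Rightarrow$ (4) of Lemma~\ref{thmCRNgen} that the paper cites), part (ii) is the same translation via Corollaries~\ref{corequivs} and~\ref{corequivs1}, and part (iii) uses the same appendix fact that weak reversibility rules out structural discordance (you cite Corollary~\ref{corWRconcord} where the paper cites Lemma~\ref{WRnormal}; these come from the same construction). The two points you flag as potential obstacles are indeed fine: accordance gives $\Gamma \rst -V_1^t \geq 0$ at every order including $r$, and $-\mathcal{V}^t$ is a matrix-pattern since transposition and negation preserve that property.
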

\begin{proof}
(i) Clearly an accordant, but structurally discordant, CRN is not concordant. In the other direction, the implication [(2) and (3)] $\Rightarrow$ (4) in Lemma~\ref{thmCRNgen}, combined with Remark~\ref{remgenkin}, tells us that an accordant CRN that is not structurally discordant is concordant. (ii) follows immediately as injectivity in the sense of IC2, \apost{IC2} or \apostt{IC2} (for the relevant kinetics) is equivalent to accordance (Corollary~\ref{corequivs1}), and injectivity in the sense of IC1, \apost{IC1} or \apostt{IC1} (for the relevant kinetics) is equivalent to concordance (Corollary~\ref{corequivs}). (iii) Weakly reversible CRNs are normal (Theorem 7.2 in \cite{Craciun.2010ac}, see Lemma~\ref{WRnormal} in Appendix~\ref{appWRconcord} for a proof), and hence not structurally discordant. The result now follows from (i). \hfill
\end{proof}
\begin{remark}[Accordant + normal $\Rightarrow$ concordant (Theorem 7.4 Shinar and Feinberg \cite{shinarfeinbergconcord1})]
Corollary~\ref{corstructdiscord} tells us that an accordant CRN is concordant if and only if it is not structually discordant. However, given any $M \in \mathcal{Q}(\Gamma_l)$, we can also easily infer that an accordant network is concordant if and only if it is $M$-normal; in particular, ``accordant + not structurally discordant'' $\Leftrightarrow$ ``accordant + normal''. The implication to the left is obvious as the normal CRNs are a subset of CRNs which are not structurally discordant; in the other direction an accordant CRN which is not structurally discordant is concordant (Corollary~\ref{corstructdiscord}), and concordant CRNs are certainly normal, by Definition~\ref{defconcord}. 
\end{remark}
\begin{remark}[Injectivity of a CRN and its fully open extension: related results]
\label{remstructdiscord}
The first claim in Corollary~\ref{corstructdiscord} is closely related to Lemma~6 in Banaji \cite{banajiJMAA}, where a graph-theoretic analogue of this claim is made. The connections between injectivity of a CRN and injectivity of its fully open counterpart are the object of Theorem~8.2 in Craciun and Feinberg \cite{Craciun.2010ac}, and of related results: Theorem~2 in Craciun and Feinberg \cite{craciun2}, Theorem 7.11 in Shinar and Feinberg \cite{shinarfeinbergconcord2}, and Corollary~5.12 in Feliu and Wiuf \cite{feliuwiufAMC2012}. Underlying several such results are a basic argument on persistence of nondegenerate equilibria under small perturbations of the vector field (Lemma~B.1 in Banaji and Craciun \cite{banajicraciun2} for example), although here this argument is not required. Craciun and Feinberg \cite{Craciun.2010ac} show that normal CRNs have the property that injectivity of the fully open extension guarantees injectivity of the network for mass action kinetics (see also Shinar and Feinberg \cite{shinarfeinbergconcord1, shinarfeinbergconcord2}). This result will turn out to be an immediate consequence of results below (see Corollary~\ref{cornormal}). 
\end{remark}

\subsection{Injectivity of simply reversible CRNs with general kinetics}
In the special case where all reactions are reversible, and no species occurs on both sides of a reaction, the results of the previous section take rather special forms. The results are stated for general kinetics, but the modifications required for weak general kinetics, or positive general kinetics are minor and are left to the reader. 

\begin{def1}[Simple, simply reversible, simply irreversible]
A CRN is referred to as {\em simple} if no species occurs on both sides of any reaction. It is {\em simply reversible} if it is simple and all reactions are reversible. Implicit in this term is the choice to treat each reversible reaction as a single reaction contributing only one column to the stoichiometric matrix, rather than as a pair of irreversible reactions. A CRN is {\em simply irreversible} if it is simple and all reactions are irreversible. Each simple CRN defines a simply irreversible one where we treat each reversible reaction as a pair of irreversible ones.
\end{def1}

\begin{def1}[Positive and negative parts of a matrix: $\Gamma_{+}$, $\Gamma_{-}$]
Given a real matrix $\Gamma$, write $\Gamma_{+}$ to mean the positive part of $\Gamma$ (i.e., we set all negative entries in $\Gamma$ to zero to obtain $\Gamma_{+}$.) Similarly, define $\Gamma_{-}$ to be the negative part of $\Gamma$, so that $\Gamma = \Gamma_{+}-\Gamma_{-}$.
\end{def1}

We first show that for a simply reversible CRN $\mathcal{R}$, concordance and accordance are combinatorial properties of its stoichiometric matrix $\Gamma$ alone. Recall that a matrix is $r$-SSD if all of its $r \times r$ submatrices are either singular or sign nonsingular, and SSD if it is $r$-SSD for each $r$ (Definition~\ref{defSSD}). 
\begin{lemma1}
\label{rSSDtoconcord}
Consider a simply reversible CRN $\mathcal{R}$ with stoichiometric matrix $0 \neq \Gamma \in \mathbb{R}^{n \times m}$ having rank $r$. Let $\overline{\mathcal{R}}$ be the corresponding simply irreversible CRN with stoichiometric matrix $\overline \Gamma$ and left stoichiometric matrix $\overline{\Gamma}_l$. Then the following are equivalent: (1) $\Gamma$ is $r$-SSD; (2) $\overline \Gamma$ is $r$-SSD; (3) $\mathcal{R}$ is concordant in the sense of Lemma~\ref{lemconcordrev}; (4) $\overline{\mathcal{R}}$ is concordant, namely $\overline{\Gamma} \rst \mathcal{Q}(\overline{\Gamma}_l) > 0$. Similarly the following are equivalent: (1a) $\Gamma$ is SSD; (2a) $\overline \Gamma$ is SSD; (3a) $\mathcal{R}$ is accordant in the sense of Lemma~\ref{lemconcordrev}; (4a) $\overline{\mathcal{R}}$ is accordant, namely $\overline{\Gamma} \Bumpeq \mathcal{Q}(\overline{\Gamma}_l)$. 
\end{lemma1}
\begin{proof}
Without loss of generality assume that $\overline \Gamma = [\Gamma|{-\Gamma}]$ and hence $\overline{\Gamma}_l = [\Gamma_{-}|\Gamma_{+}]$. Observe that $\mathrm{rank}\,\Gamma = \mathrm{rank}\,\overline{\Gamma}$ and that $\mathcal{Q}(-\overline{\Gamma}_l) \subseteq \mathcal{Q}_0(\overline{\Gamma})$. (1) $\Rightarrow$ (2) and (1a) $\Rightarrow$ (2a): each $r \times r$ submatrix of $\overline \Gamma$ is either automatically singular having two collinear columns, or is simply an $r \times r$ submatrix of $\Gamma$, possibly with some columns reordered and re-signed: these operations preserve singularity and sign nonsingularity. (2) $\Rightarrow$ (1) and (2a) $\Rightarrow$ (1a) are automatic as  $\Gamma$ is a submatrix of $\overline \Gamma$ of the same rank. That (1) $\Leftrightarrow$ (3) is immediate once we observe that, for a simply reversible system: (i) $\Gamma$ is $r$-SSD is equivalent to $\Gamma \rst \mathcal{Q}(\Gamma) >0$ (Lemma~\ref{lemmain}), and (ii) $\mathcal{Q}(\Gamma) = -\mathcal{Q}(\Gamma_{-}) + \mathcal{Q}(\Gamma_{+}) = -\mathcal{Q}(\Gamma_l) + \mathcal{Q}(\Gamma_r)$. (1a) $\Leftrightarrow$ (3a) follows similarly: $\Gamma$ is SSD is equivalent to $\Gamma \Bumpeq \mathcal{Q}(\Gamma)$ (Lemma~\ref{lemmain2}), namely $\Gamma \Bumpeq (\mathcal{Q}(\Gamma_r)-\mathcal{Q}(\Gamma_l))$. (3) $\Leftrightarrow$ (4) and (3a) $\Leftrightarrow$ (4a) follow from Lemma~\ref{lemconcordrev}. 
\hfill \end{proof}

Thus for a simply reversible CRN $\mathcal{R}$:
\begin{itemize}[align=left,leftmargin=*]
\item $\mathcal{R}$ is accordant $\Leftrightarrow$ $\Gamma$ is SSD.
\item $\mathcal{R}$ is concordant $\Leftrightarrow$ $\Gamma$ is $r$-SSD, where $r = \mathrm{rank}\,\Gamma$.
\end{itemize}
As $\Gamma$ is SSD implies $\Gamma$ is $r$-SSD, accorance implies concordance for simply reversible CRNs. This if of course also automatic from Corollary~\ref{corstructdiscord}(iii), as simply reversible CRNs are weakly reversible.

\begin{thm}
\label{thmCRN1}
Consider a simply reversible CRN $\mathcal{R}$ with stoichiometric matrix $0 \neq \Gamma \in \mathbb{R}^{n \times m}$. Let $G_\Gamma$ be the SR graph of $\Gamma$. Then, with $r = \mathrm{rank}\,\Gamma$,
\begin{enumerate}[align=left,leftmargin=*]
\item[(a)] If $\Gamma$ is $r$-SSD, then $\mathcal{R}$ satisfies claim \apostt{IC1} for general kinetics. If $\Gamma$ fails to be $r$-SSD, then there exists a choice of power-law general kinetics such that $\mathcal{R}$ has multiple positive equilibria on some stoichiometry class.
\label{cc3}
\item[(b)] If $\Gamma$ is SSD, then $\mathcal{R}$ satisfies claims \apostt{IC1} and \apostt{IC2} for general kinetics. If $\Gamma$ fails to be SSD, then there exists a choice of power-law general kinetics, and inflows and outflows, such that the fully open system has multiple positive equilibria.
\label{cc2}
\item[(c)] If $G_\Gamma$ satisfies Condition~($*$), then $\mathcal{R}$ satisfies claims \apostt{IC1} and \apostt{IC2} for general kinetics.
\label{cc1}
\end{enumerate}
\end{thm}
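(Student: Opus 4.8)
The plan is to reduce all three parts to the general-kinetics dichotomy of Theorem~\ref{thmnoninjgen} by using Lemma~\ref{rSSDtoconcord} to translate the combinatorial hypotheses on $\Gamma$ into the concordance/accordance properties of $\mathcal{R}$. Lemma~\ref{rSSDtoconcord} already establishes that, for a simply reversible CRN, $\Gamma$ being $r$-SSD is equivalent to concordance of $\mathcal{R}$, and $\Gamma$ being SSD is equivalent to accordance of $\mathcal{R}$. Thus essentially no new matrix-theoretic work is required, and the proof becomes a matter of invoking the right earlier results in sequence.

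For part (a), I would first note that by Lemma~\ref{rSSDtoconcord}, $\Gamma$ is $r$-SSD if and only if $\mathcal{R}$ is concordant. If $\Gamma$ is $r$-SSD, Theorem~\ref{thmnoninjgen}(a) immediately gives that $\mathcal{R}$ satisfies \apostt{IC1} for general kinetics. For the converse direction, if $\Gamma$ fails to be $r$-SSD then $\mathcal{R}$ is discordant, and Theorem~\ref{thmnoninjgen}(a) yields that \emph{either} $\mathcal{R}$ admits no positive equilibria, \emph{or} there is a choice of power-law general kinetics producing multiple positive equilibria on some stoichiometry class. The step requiring care here is excluding the first alternative: a simply reversible CRN is reversible, hence weakly reversible, and weakly reversible CRNs admit positive equilibria, as the kernel of the irreversible stoichiometric matrix contains a positive vector (Definition~\ref{defposeq}, and the argument used in the proof of Corollary~\ref{corWR}). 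Consequently the multiple-equilibria conclusion must hold.

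For part (b), Lemma~\ref{rSSDtoconcord} gives that $\Gamma$ is SSD if and only if $\mathcal{R}$ is accordant. If $\Gamma$ is SSD, then $\mathcal{R}$ is accordant, so Theorem~\ref{thmnoninjgen}(b) delivers \apostt{IC2} for general kinetics; moreover SSD implies $r$-SSD, hence concordance, so part (a) (equivalently Theorem~\ref{thmnoninjgen}(a)) simultaneously yields \apostt{IC1}. If $\Gamma$ fails to be SSD, then $\mathcal{R}$ is not accordant, and Theorem~\ref{thmnoninjgen}(b) provides a choice of power-law general kinetics together with inflows and outflows $c, q(\cdot)$ such that the fully open system has multiple positive equilibria. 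No weak-reversibility argument is needed on this side, since the failure-of-accordance branch of Theorem~\ref{thmnoninjgen}(b) is unconditional.

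Part (c) is then immediate: if $G_\Gamma$ satisfies Condition~($*$), then by the implication (\ref{condstar}) $\Rightarrow$ (\ref{condSSD}) of Lemma~\ref{lemmain2}, $\Gamma$ is SSD, and the conclusion follows directly from part (b). The only genuine subtlety across the whole argument is the observation in part (a) that simple reversibility forces weak reversibility and hence the existence of positive equilibria, which is precisely what rules out the degenerate ``no positive equilibria'' case of Theorem~\ref{thmnoninjgen}(a); everything else is a direct citation of prior results.
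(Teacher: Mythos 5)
Your proposal is correct and follows essentially the same route as the paper: translate the SSD/$r$-SSD hypotheses into accordance/concordance via Lemma~\ref{rSSDtoconcord}, invoke Theorem~\ref{thmnoninjgen}, and reduce part (c) to part (b) through Lemma~\ref{lemmain2}. The only cosmetic difference is in part (a), where the paper rules out the ``no positive equilibria'' alternative by directly observing that $\mathbf{1} \in \mathrm{ker}\,\overline{\Gamma}$ (since reversible reactions contribute paired, oppositely-signed columns), whereas you reach the same conclusion by citing weak reversibility and the argument of Corollary~\ref{corWR} -- both rest on the same underlying fact.
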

\begin{proof}
(a) By Lemma~\ref{rSSDtoconcord}, $\Gamma$ is $r$-SSD implies that $\mathcal{R}$ is concordant, and $\mathcal{R}$ satsifies claim \apostt{IC1} for general kinetics by Theorem~\ref{thmnoninjgen}(a). If $\Gamma$ fails to be $r$-SSD then, by Lemma~\ref{rSSDtoconcord}, $\mathcal{R}$ is discordant. Observe that $\mathbf{1} \in \mathrm{ker}\,\overline{\Gamma}$ for any choice of irreversible stoichiometric matrix $\overline{\Gamma}$ (as each reaction has a corresponding oppositely directed reaction), and the existence of multiple positive equilibria on some stoichiometry class for some choice of power-law general kinetics now follows by Theorem~\ref{thmnoninjgen}(a).

(b) If $\Gamma$ is SSD, then it is certainly $r$-SSD and so satisfies claim \apostt{IC1} for general kinetics as before. By Lemma~\ref{rSSDtoconcord}, $\Gamma$ is SSD if and only if $\mathcal{R}$ is accordant, and $\mathcal{R}$ satisfies \apostt{IC2} for general kinetics by Theorem~\ref{thmnoninjgen}(b). The conclusion about multistationarity is also an immediate special case of Theorem~\ref{thmnoninjgen}(b).

(c) Finally, if $G_\Gamma$ satisfies Condition~($*$) then $\Gamma$ is SSD (Lemma~\ref{lemmain2}), and consequently $r$-SSD. The claim now follows from (a) and (b). 
\hfill
\end{proof}

\begin{remark}[Related results]
The conclusions that if $G_\Gamma$ satisfies Condition~($*$) then $\Gamma$ is SSD, and that this implies \apostt{IC2} is satisfied for general kinetics, are the subject of Banaji et al. \cite{banajiSIAM} and Banaji and Craciun \cite{banajicraciun}.
\end{remark}

\begin{remark}
Theorem~\ref{thmCRN1} and preceding results imply that a simply reversible CRN with general kinetics and SSD stoichiometric matrix satisfies:
\begin{itemize}[align=left,leftmargin=*]
\item Any positive equilibrium is the unique equilibrium on its stoichiometry class. If stoichiometry classes are bounded then each nontrivial stoichiometry class contains a positive equilibrium (Lemma~\ref{rSSDtoconcord} and Remark~\ref{concordtopersist}).
\item The fully open system has no more than one equilibrium in $\mathbb{R}^n_{\geq 0}$.
\end{itemize}
\end{remark}

\subsection{Injectivity of arbitrary CRNs with power-law/mass action kinetics}
\label{secMA}
In the discussion in this subsection and the next the stoichiometric matrix $\Gamma$ is always the irreversible stoichiometric matrix of the system. 

First we provide another characterisation of $M$-concordance and $M$-accordance (Definition~\ref{defconcord}) which makes clear the close and surprising parallels between results for power-law kinetics (with mass action as a special case), and for general kinetics, discussed further in the conclusions. 
\begin{lemma1}[$M$-concordance, $M$-accordance]
\label{lemMconcordbasic}
Let $\mathcal{R}$ be a CRN with irreversible stoichiometric matrix $\Gamma$. Let $M$ be a fixed matrix with the dimensions of $\Gamma$. Then:
\begin{enumerate}[align=left,leftmargin=*]
\item $\mathcal{R}$ is {\bf $M$-concordant} $\Leftrightarrow$ for all $M$-power-law kinetics the reduced determinant of $\mathcal{R}$ on $\mathbb{R}^n_{\gg 0}$ is nonzero $\Leftrightarrow$ $\mathrm{det}_\Gamma\,\Gamma\, V \neq 0$ for all $V \in \mathcal{Q}'(M^t)$ $\Leftrightarrow$ $\mathcal{Q}'(M^t)$ is $\Gamma$-nonsingular.
\item $\mathcal{R}$ is {\bf $M$-accordant} $\Leftrightarrow$ for all $M$-power-law kinetics, the negative of the Jacobian matrix of $\mathcal{R}$ on $\mathbb{R}^n_{\gg 0}$ is a $P_0$-matrix $\Leftrightarrow$ $-\Gamma V$ is a $P_0$-matrix for all $V \in \mathcal{Q}'(M^t)$ $\Leftrightarrow$ $\Gamma \Bumpeq \mathcal{Q}'(-M)$.
\end{enumerate}
\end{lemma1}
\begin{proof}
Recall that by Remark~\ref{remPLJac} the set of all Jacobian matrices of a CRN with $M$-power-law kinetics is $\{\Gamma V\colon V \in \mathcal{Q}'(M^t)\}$. The first result is now immediate by Lemma~\ref{lemMcompat}, and the second by Lemmas~\ref{lemP0}~and~\ref{lemcompat_diag}. \hfill
\end{proof}

We immediately have the corollary for mass action:
\begin{cor}[Semiconcordance, semiaccordance]
\label{lemsemiconcordbasic}
Let $\mathcal{R}$ be a CRN with irreversible stoichiometric matrix $\Gamma$ and left stoichiometric matrix $\Gamma_l$. Then:
\begin{enumerate}[align=left,leftmargin=*]
\item $\mathcal{R}$ is {\bf semiconcordant} $\Leftrightarrow$ for all mass action kinetics the reduced determinant of $\mathcal{R}$ on $\mathbb{R}^n_{\gg 0}$ is nonzero $\Leftrightarrow$ $\mathrm{det}_\Gamma\,\Gamma\, V \neq 0$ for all $V \in \mathcal{Q}'(\Gamma_l^t)$ $\Leftrightarrow$ $\mathcal{Q}'(\Gamma_l^t)$ is $\Gamma$-nonsingular.
\item $\mathcal{R}$ is {\bf semiaccordant} $\Leftrightarrow$ for all mass action kinetics, the negative of the Jacobian matrix of $\mathcal{R}$ on $\mathbb{R}^n_{\gg 0}$ is a $P_0$-matrix $\Leftrightarrow$ $-\Gamma V$ is a $P_0$-matrix for all $V \in \mathcal{Q}'(\Gamma_l^t)$ $\Leftrightarrow$ $\Gamma \Bumpeq \mathcal{Q}'(-\Gamma_l)$.
\end{enumerate}
\end{cor}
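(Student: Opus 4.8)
The plan is to obtain this corollary as the special case $M = \Gamma_l$ of Lemma~\ref{lemMconcordbasic}, so that essentially no new work is required beyond matching definitions. First I would observe that $\Gamma_l$ has exactly the dimensions of $\Gamma$ (both lie in $\mathbb{R}^{n \times m}$), so $\Gamma_l$ is an admissible choice for the fixed matrix $M$ in Lemma~\ref{lemMconcordbasic}, and its hypotheses are met.

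Next I would recall two purely definitional facts. By Definition~\ref{defMACRN}, mass action kinetics is precisely the case $M = \Gamma_l$ of $M$-power-law kinetics; hence the phrase ``for all mass action kinetics'' is literally the phrase ``for all $\Gamma_l$-power-law kinetics''. By Definition~\ref{defconcord}, semiconcordance \emph{is} $\Gamma_l$-concordance and semiaccordance \emph{is} $\Gamma_l$-accordance, so there is nothing to prove about the leftmost term of each equivalence chain either.

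With these identifications in hand, part (1) of the corollary is obtained verbatim by substituting $M = \Gamma_l$ into part (1) of Lemma~\ref{lemMconcordbasic}: the four equivalent conditions there read, respectively, ``$\mathcal{R}$ is semiconcordant'', ``for all mass action kinetics the reduced determinant of $\mathcal{R}$ on $\mathbb{R}^n_{\gg 0}$ is nonzero'', ``$\mathrm{det}_\Gamma\,\Gamma\, V \neq 0$ for all $V \in \mathcal{Q}'(\Gamma_l^t)$'', and ``$\mathcal{Q}'(\Gamma_l^t)$ is $\Gamma$-nonsingular''. Likewise, substituting $M = \Gamma_l$ into part (2) produces the semiaccordance chain, with the closing condition $\Gamma \Bumpeq \mathcal{Q}'(-M)$ becoming $\Gamma \Bumpeq \mathcal{Q}'(-\Gamma_l)$ and the $P_0$-matrix condition ranging over $V \in \mathcal{Q}'(\Gamma_l^t)$.

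Since the argument is a pure specialization, there is no genuine obstacle; the only thing worth double-checking is that the ingredients used \emph{inside} the proof of Lemma~\ref{lemMconcordbasic} --- namely Remark~\ref{remPLJac} identifying the set of Jacobian matrices of an $M$-power-law CRN as $\{\Gamma V \colon V \in \mathcal{Q}'(M^t)\}$, together with Lemmas~\ref{lemMcompat}, \ref{lemP0} and \ref{lemcompat_diag} --- all hold unchanged when $M = \Gamma_l$, which they do, as none of them restricts $M$ beyond being a real matrix of the appropriate size.
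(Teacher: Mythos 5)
Your proposal is correct and matches the paper's treatment exactly: the paper states this result with the single phrase ``We immediately have the corollary for mass action,'' i.e., it is obtained precisely by specializing Lemma~\ref{lemMconcordbasic} to $M = \Gamma_l$ and invoking the definitional identifications (mass action kinetics as $\Gamma_l$-power-law kinetics, semiconcordance as $\Gamma_l$-concordance, semiaccordance as $\Gamma_l$-accordance) that you spell out.
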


Observe that where (for an irreversible CRN) concordance and accordance are conditions relating minors of $\Gamma$ to minors of $M$ {\em for each $M\in\mathcal{Q}(\Gamma_l)$}, semiconcordance and semiaccordance are simply a condition relating minors of $\Gamma$ to minors of $\Gamma_l$. However, both concordance and semiconcordance can be interpreted as $\Gamma$-nonsingularity of sets of matrices related to $\Gamma_l$: the qualitative class $\mathcal{Q}(\Gamma_l)$ in the case of concordance, and the semiclass $\mathcal{Q}'(\Gamma_l)$ in the case of semiconcordance. Similarly both accordance and semiaccordance can be seen as nonsingularity of a set of matrices: $\{-\Gamma V + D\colon V \in \mathcal{Q}(\Gamma_l^t), D \in \mathcal{D}_n\}$ in the case of accordance, and $\{-\Gamma V + D\colon V \in \mathcal{Q}'(\Gamma_l^t), D \in \mathcal{D}_n\}$, in the case of semiaccordance. Interestingly, if the bipartite graph of $\Gamma_l$ includes no cycles, then $\mathcal{Q}(\Gamma_l) = \mathcal{Q}'(\Gamma_l)$ (see Remark~\ref{remsemiclass}) and in this case semiconcordance of a CRN is equivalent to concordance, and semiaccordance is equivalent to accordance. We need some further lemmas in order to be able to state, in Theorem~\ref{gen_powlaw_CRN} below, the connections between $M$-concordance and $M$-accordance on the one hand, and injectivity/ multistationarity of a CRN with $M$-power-law kinetics.

\begin{lemma1}
\label{lem_noninj}
Consider a CRN $\mathcal{R}$ with irreversible stoichiometric matrix $0 \neq \Gamma \in \mathbb{R}^{n \times m}$, and let $M \in \mathbb{R}^{n \times m}$. If $\mathcal{R}$ is not $M$-accordant (namely, $\Gamma \not \Bumpeq -M$), then $\mathcal{R}$ with $M$-power-law kinetics fails condition IC2. In particular, there exist $E' \in \mathcal{D}_m$, $E'' \in \mathcal{D}_n$, and $x, y \gg 0$, $x \neq y$, such that $\Gamma E'\mathrm{exp}(M^t\ln x) - E''x = \Gamma E'\mathrm{exp}(M^t\ln y) - E''y$. 
\end{lemma1}
\begin{proof}
Suppose $\Gamma \not \Bumpeq -M$. Define $\tilde \Gamma = [\Gamma\,|\,{-I}]$, $\tilde M = [M\,|\,I]$, so that, by Lemma~\ref{lembump}, $\tilde \Gamma \nst (-\tilde M) \not > 0$. Also by Equation (\ref{eqbump}) in Lemma~\ref{lembump}, $\tilde \Gamma \nst (-\tilde M) \not < 0$. Observe that $\tilde \Gamma$ has rank $n$, and by Theorem~\ref{gen_powlaw}, the function $f(x) = \tilde{\Gamma} E \mathrm{exp}(\tilde M^t\ln x)$ fails claim IC1 for some $E \in \mathcal{D}_{n+m}$: i.e., there exist $x,y \in \mathbb{R}^n_{\gg 0}$, such that $f(x) = f(y)$, namely
\[
\Gamma E'\mathrm{exp}(M^t\ln x) - E'' x = \Gamma E'\mathrm{exp}(M^t\ln y) - E'' y\,,
\]
where $E' =E(\{1, \ldots, m\}) \in \mathcal{D}_m$ and $E'' =E(\{m\!+\!1, \ldots, m\!+\!n\}) \in \mathcal{D}_n$.
\hfill
\end{proof}
\begin{lemma1}
\label{lem_reverse}
Consider a CRN $\mathcal{R}$ with irreversible stoichiometric matrix $\Gamma \in \mathbb{R}^{n \times m}$, and let $M \in \mathbb{R}^{n \times m}$. If $-M$ is strongly $\Gamma$-incompatible (Definition~\ref{defstrongincompat}), then the fully open extension of $\mathcal{R}$ with $M$-power-law kinetics admits multiple positive equilibria. In particular, there exist $E \in \mathcal{D}_m$, $D \in \mathcal{D}_n$, $c \gg 0$, and $x, y \gg 0$, $x \neq y$, such that $c + \Gamma E\exp(M^t\ln x) - Dx = c + \Gamma E\exp(M^t\ln y) - Dy = 0$.
\end{lemma1}
\begin{proof}
Define $\tilde \Gamma = [\Gamma\,|\,{-I}]$ and $\tilde M = [M\,|\,I]^t$. Recall that $-M$ is strongly $\Gamma$-incompatible if and only if there exists $D_0\in \mathcal{D}_{n+m}$ such that $\mathrm{det}(-\tilde \Gamma D_0 \tilde M) < 0$ and $\tilde \Gamma D_0\mathbf{1} \leq 0$. Assume that $-M$ is strongly $\Gamma$-incompatible and choose such a $D_0$. Defining $D' =D_0(\{1, \ldots, m\}) \in \mathcal{D}_m$, $D'' =D_0(\{m\!+\!1, \ldots, m\!+\!n\}) \in \mathcal{D}_n$, note that 
\[
\tilde \Gamma D_0\mathbf{1} = \Gamma D'\mathbf{1} - D''\mathbf{1} \quad \mbox{and} \quad -\tilde \Gamma D_0\,\tilde M = {-\Gamma} D' M^t + D''\,.
\]
Clearly, by increasing the diagonal elements of $D''$ we can in fact choose $D_1\in \mathcal{D}_{n+m}$ such that $\mathrm{det}\,(\tilde \Gamma D_1\tilde M) = 0$ and $\tilde \Gamma D_1\mathbf{1} \ll 0$. We now choose $0 \neq z \in \mathrm{ker}\,(\tilde \Gamma D_1 \tilde M)$. Let $x = \mathbf{1}$, $y = \exp(z) \gg 0$, and define $\tilde D(z)\in \mathcal{D}_{n+m}$ via
\[
[\tilde D(z)]_{ii} = \left\{\begin{array}{ll}\frac{\exp(\tilde M z)_i - 1}{(\tilde M z)_i} & \mbox{if } (\tilde M z)_i \neq 0,\\1 & \mbox{otherwise.}\end{array}\right.
\]
This gives $\exp(\tilde M\ln y)-\exp(\tilde M\ln x) = \exp(\tilde M z) - \mathbf{1} = \tilde D(z) \tilde M z$. Setting $E(z) = D_1\tilde D^{-1}(z) \in \mathcal{D}_{n+m}$ gives:
\begin{equation}
\label{eqA}
\tilde \Gamma E(z) (\exp(\tilde M\ln y)-\exp(\tilde M\ln x)) = \tilde \Gamma E(z) \tilde D(z) \tilde M z = \tilde \Gamma D_1 \tilde M z = 0\,.
\end{equation}
Observe that as we scale $z$ such that $z \to 0$, $\tilde D(z)$ approaches the identity matrix and thus $E(z) \to D_1$, and so $\tilde \Gamma E(z) \exp(\tilde M\ln x) = \tilde \Gamma E(z) \mathbf{1} \to \tilde \Gamma D_1\mathbf{1} \ll 0$ as $z \to 0$. Thus, by choosing $z \neq 0$ with $|z|$ sufficiently small we can guarantee that $\tilde \Gamma E(z) \mathbf{1} \ll 0$. Choose and fix such a $z$ and set $c(z) = -\tilde \Gamma E(z) \mathbf{1} \gg 0$, so that 
\begin{eqnarray*}
c(z) + \tilde \Gamma E(z) \exp(\tilde M\ln y) & = & c(z) + \tilde \Gamma E(z) \exp(\tilde M\ln x)\quad \mbox{(by Equation~\ref{eqA})}\\
& = & -\tilde \Gamma E(z) \mathbf{1} + \tilde \Gamma E(z) \mathbf{1} = 0\,,
\end{eqnarray*}
and $x,y$ are thus a pair of distinct positive equilibria for the fully open system with $c$ and the rate constants (including outflow rates) chosen appropriately.
\hfill
\end{proof}

The next theorem summarises injectivity and multistationarity results proved above for a system with fixed power-law kinetics. Part (a) tells us that $M$-concordance is necessary and sufficient for a CRN with $M$-power-law kinetics to be injective in the sense of IC1 for all choices of rate constants (and semiconcordance is necessary and sufficient for a mass action system to be injective in the sense of IC1 or IC1a for all choices of rate constants). The remainder of the theorem provides necessary and sufficient conditions for injectivity/the absence of multiple positive equilibria in the fully open system.

\begin{thm}
\label{gen_powlaw_CRN}
Let $M\in \mathbb{R}^{n \times m}$ be fixed. Consider a CRN $\mathcal{R}$ with (irreversible) stoichiometric matrix $0 \neq \Gamma \in \mathbb{R}^{n \times m}$, left stoichiometric matrix $\Gamma_l \in \mathbb{R}^{n \times m}$, and $M$-power-law kinetics. Let $\mathcal{R}_o$ be the fully open extension of $\mathcal{R}$.
\begin{enumerate}[align=left,leftmargin=*]
\item[(a)] Let $r = \mathrm{rank}\,\Gamma$. The following statements are equivalent:
\begin{enumerate}[align=left,leftmargin=*]
\item[(i)] $\mathcal{R}$ is $M$-concordant (i.e., $\Gamma \rst -M > 0$ or $\Gamma \rst -M < 0$).
\item[(ii)] $\mathrm{rank}\,(\Gamma D_1 M^t D_2 \Gamma) = \mathrm{rank}\,\Gamma$ for all $D_1 \in \mathcal{D}_m$ and $D_2 \in \mathcal{D}_n$ (i.e., $\mathcal{Q}'(M^t)$ is $\Gamma$-nonsingular).
\item[(iii)] For all rate constants, $\mathcal{R}$ satisfies claim IC1.
\end{enumerate}
If $M \geq 0$, these are additionally equivalent to:
\begin{enumerate}[align=left,leftmargin=*]
\item[(iv)] For all rate constants, $\mathcal{R}$ satisfies claim IC1a.
\end{enumerate}
\item[(b)] If $\mathcal{R}$ is $M$-accordant (i.e., $\Gamma \Bumpeq -M$) with $M \in \mathbb{R}^{n \times m}$ (resp., $0 \leq M \in \mathbb{R}^{n \times m}$, resp., $M \in \mathbb{R}^{n \times m}$ with $M_{ij} = 0$ or $M_{ij} \geq 1$ for all $i,j$), then for all rate constants, $\mathcal{R}$ satisfies claims IC2 (resp., \apost{IC2}, resp., \apostt{IC2}). 
\item[(c)] If $\mathcal{R}$ is not $M$-accordant (i.e., $\Gamma \not \Bumpeq -M$), then $\mathcal{R}$ fails condition IC2. In particular, there exist $E \in \mathcal{D}_m$, $D \in \mathcal{D}_n$, and $x, y \gg 0$, $x \neq y$, such that $\Gamma E\mathrm{exp}(M^t\ln x) - Dx = \Gamma E\mathrm{exp}(M^t\ln y) - Dy$. 
\item[(d)] If $-M$ is strongly $\Gamma$-incompatible (Definition~\ref{defstrongincompat}), then $\mathcal{R}_o$ admits multiple positive equilibria. In particular, there exist $E \in \mathcal{D}_m$, $D \in \mathcal{D}_n$, $c \gg 0$, and $x, y \gg 0$, $x \neq y$, such that $c + \Gamma E\mathrm{exp}(M^t\ln x) - Dx = c + \Gamma E\mathrm{exp}(M^t\ln y) - Dy = 0$.
\end{enumerate}
\end{thm}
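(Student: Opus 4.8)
The plan is to observe that Theorem~\ref{gen_powlaw_CRN} collates, in the language of $M$-concordance and $M$-accordance, the power-law injectivity and multistationarity results already established; accordingly each part will be reduced to an earlier statement, the only care needed being the transpose convention. The key bookkeeping point is that by Definition~\ref{defMACRN} a CRN with $M$-power-law kinetics has reaction rate $v = E x^{M^t}$, so its matrix of exponents is $M^t$. Hence results phrased in terms of a matrix of exponents, namely Theorem~\ref{gen_powlaw} and Lemma~\ref{gen_powlaw1}, will be applied with their exponent matrix taken to be $M^t \in \mathbb{R}^{m \times n}$. Under this substitution the compatibility hypotheses appearing there, $\Gamma \rst -(M^t)^t$ and $\Gamma \Bumpeq -(M^t)^t$, become $\Gamma \rst -M$ and $\Gamma \Bumpeq -M$, i.e.\ exactly $M$-concordance and $M$-accordance (Definition~\ref{defconcord}).

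For part (a) I would apply Theorem~\ref{gen_powlaw} with exponent matrix $M^t$. Its three equivalent conditions then read precisely as (i), (ii) and (iii): condition (i) becomes $\Gamma \rst -M > 0$ or $\Gamma \rst -M < 0$; condition (ii) becomes $\mathrm{rank}(\Gamma D_1 M^t D_2 \Gamma) = \mathrm{rank}\,\Gamma$ for all $D_1 \in \mathcal{D}_m$, $D_2 \in \mathcal{D}_n$ (that is, $\mathcal{Q}'(M^t)$ is $\Gamma$-nonsingular); and condition (iii) becomes the assertion that every $\Gamma E \exp(M^t \ln x)$ satisfies claim IC1, i.e.\ $\mathcal{R}$ satisfies IC1 for all rate constants $E \in \mathcal{D}_m$. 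For the extra equivalence (iv) in the case $M \geq 0$ I would invoke Remark~\ref{remic2dash}, which strengthens the IC1 conclusion of Theorem~\ref{gen_powlaw} to IC1a precisely when the exponent matrix is nonnegative (here $M^t \geq 0 \Leftrightarrow M \geq 0$), and closes the loop via the implications IC1a $\Rightarrow$ \apost{IC1} $\Rightarrow$ IC1 noted there.

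Part (b) would follow by applying Lemma~\ref{gen_powlaw1}, again with exponent matrix $M^t$: its hypothesis $\Gamma \Bumpeq -(M^t)^t = -M$ is exactly $M$-accordance, and since the entrywise constraints ``$M_{ij} = 0$ or $M_{ij} \geq 1$'' and ``$M \geq 0$'' are unchanged by transposition, the three cases of the lemma deliver \apostt{IC2}, \apost{IC2} and IC2 respectively, as stated. Finally, parts (c) and (d) require no new argument: part (c) is precisely Lemma~\ref{lem_noninj}, and part (d) is precisely Lemma~\ref{lem_reverse}, each of which already produces the explicit pair of points $x, y \gg 0$ and the diagonal matrices $E, D$ (and the vector $c$ in the case of (d)) witnessing the stated conclusion.

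The only genuine obstacle is thus bookkeeping rather than mathematics: one must check that the transpose convention is applied uniformly, so that $M$-concordance and $M$-accordance (defined on $M \in \mathbb{R}^{n \times m}$) line up with the compatibility hypotheses of the general power-law results (phrased for an exponent matrix in $\mathbb{R}^{m \times n}$), and that the pointwise sign and size conditions on $M$ transfer correctly through the transpose, which they do trivially. No step introduces a new idea beyond those already present in Theorem~\ref{gen_powlaw} and Lemmas~\ref{gen_powlaw1}, \ref{lem_noninj} and \ref{lem_reverse}.
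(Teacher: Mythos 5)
Your proposal is correct and follows essentially the same route as the paper, whose proof of this theorem consists precisely of citing Theorem~\ref{gen_powlaw} together with Remark~\ref{remic2dash} for part (a), Lemma~\ref{gen_powlaw1} for part (b), Lemma~\ref{lem_noninj} for part (c), and Lemma~\ref{lem_reverse} for part (d). The only difference is that you spell out the transpose bookkeeping (exponent matrix $M^t$ versus the matrix $M$ appearing in the concordance/accordance conditions), which the paper leaves implicit; this is a harmless and indeed helpful elaboration.
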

\begin{proof}
(a) This follows immediately from Theorem~\ref{gen_powlaw} and Remark~\ref{remic2dash}. (b) This follows from Lemma~\ref{gen_powlaw1}. (c) This follows from Lemma~\ref{lem_noninj}. (d) This follows from Lemma~\ref{lem_reverse}.
\hfill
\end{proof}

\begin{remark}[Theorem~\ref{gen_powlaw_CRN} for mass action]
\label{remMpowlaw}
If we set $M = \Gamma_l$ in Theorem~\ref{gen_powlaw_CRN}, we immediately get the important special case of mass action kinetics. In this case, note that $0 \leq M = \Gamma_l \in \mathbb{Z}^{n \times m}$, so, for example, the system is semiconcordant if and only if $\Gamma v$ satisfies claim IC1a for all rate constants; similarly the system is semiaccordant (namely $\Gamma \Bumpeq {-\Gamma_l}$) if and only if it satisfies \apostt{IC2} for all rate constants which occurs if and only if it satisfies IC2 for all rate constants. 
\end{remark}

\begin{remark}[Results related to Theorem~\ref{gen_powlaw_CRN} in the case of mass action kinetics]
Theorem~3.1 in Craciun and Feinberg \cite{craciun} states that a fully open CRN (\ref{reacsys1}) with mass action kinetics is injective on $\mathbb{R}^n_{\gg 0}$ if and only if it has nonsingular Jacobian matrix at each $x \in \mathbb{R}^n_{\gg 0}$ and for all rate constants. By similar methods of proof, Corollary 5.9 in Feliu and Wiuf \cite{feliuwiufAMC2012} shows that changing ``Jacobian'' to ``reduced Jacobian'' in the statement above, and restricting attention to stoichiometry classes, yields a result that holds for any CRN, not necessarily fully open. Bearing in mind Remark~\ref{remPLJac}, these are immediate consequences of Theorem~\ref{gen_powlaw_CRN}(a). 
The result in part (d) of Theorem~\ref{gen_powlaw_CRN} giving sufficient conditions for multiple positive equilibria in a fully open system with power-law kinetics, is close to that of Theorem~4.1 in Craciun and Feinberg \cite{craciun}. A related result also appears in Feliu \cite{feliuRoySoc}.
The equivalence of $(a)(i)$, $(a)(iii),$ and the sign condition mentioned in Remark \ref{rem:signCond} is the object of Theorem 3.4. in M\"uller et al. \cite{muellerAll}. 
\end{remark}

{\bf Injectivity of a CRN with power-law kinetics and its fully open extension.} Quite analogously to the situation for general kinetics, it is natural to ask of a CRN $\mathcal{R}$ with fixed power-law kinetics when injectivity of the fully open extension in the sense of IC2 (resp., \apost{IC2}) implies injectivity of $\mathcal{R}$ in the sense of IC1 (resp., IC1a). Where in the case of general kinetics a necessary and sufficient nondegeneracy condition was that $\mathcal{R}$ should not be structurally discordant, for fixed $M$-power-law kinetics (including mass action as a special case), a necessary and sufficient condition is that $\mathcal{R}$ should be $M$-normal. Recall that an irreversible CRN with stoichiometric matrix $\Gamma$ is $M$-normal if $\Gamma \rst M \neq 0$, or equivalently, $\mathcal{Q}'(M^t)$ is not $\Gamma$-singular. By Remark~\ref{remPLJac} the set of all Jacobian matrices of a CRN with $M$-power-law kinetics is $\{\Gamma V\colon V \in \mathcal{Q}'(M^t)\}$, and so $M$-normal CRNs are precisely those which have nonzero reduced determinant somewhere on $\mathbb{R}^n_{\gg 0}$ for $M$-power-law kinetics and some choice of rate constants. Similarly, normal CRNs are those which have nonzero reduced determinant somewhere on $\mathbb{R}^n_{\gg 0}$ for mass action kinetics and some choice of rate constants. We have the following corollary of Theorem~\ref{gen_powlaw_CRN}:
\begin{cor}
\label{cornormal}
Let $\mathcal{R}$ be a CRN with irreversible stoichiometric matrix $\Gamma$ and, let $M$ be any matrix with the dimensions of $\Gamma$. (i) If $\mathcal{R}$ is $M$-accordant, then it is $M$-concordant if and only if it is $M$-normal. (ii) If $\mathcal{R}$ satisfies IC2 (resp., \apost{IC2}) for power-law kinetics (resp., physical power-law kinetics) with matrix of exponents $M^t$, then it satisfies IC1 (resp., IC1a) for this kinetics if and only if it is $M$-normal.
\end{cor}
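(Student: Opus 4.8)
The plan is to dispatch part~(i) directly from the minor-based definitions in Definition~\ref{defconcord}, and then to obtain part~(ii) by translating each injectivity hypothesis and conclusion into a statement about $M$-accordance, $M$-concordance and $M$-normality via Theorem~\ref{gen_powlaw_CRN}, whereupon it collapses onto part~(i).

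For part~(i), recall that $M$-accordance is the statement $\Gamma \Bumpeq -M$, which by the definition of $\Bumpeq$ gives $\Gamma^{(r')}\circ(-M)^{(r')} \geq 0$ at every level $r'$, in particular at the level $r = \mathrm{rank}\,\Gamma$ used in the definitions of $M$-concordance and $M$-normality; that is, $\Gamma \rst -M \geq 0$. Since $(-M)^{(r)} = (-1)^r M^{(r)}$, the conditions $\Gamma \rst M \neq 0$ and $\Gamma \rst -M \neq 0$ coincide, so $M$-normality is equivalent to $\Gamma \rst -M \neq 0$. The forward implication needs no accordance hypothesis: $M$-concordance gives $\Gamma \rst -M > 0$ or $\Gamma \rst -M < 0$, either of which forces $\Gamma \rst -M \neq 0$, i.e.\ $M$-normality. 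For the reverse, assume both $M$-accordance and $M$-normality; then $\Gamma \rst -M \geq 0$ (accordance) and $\Gamma \rst -M \neq 0$ (normality), which is exactly $\Gamma \rst -M > 0$ in the paper's convention, hence $M$-concordance. This settles part~(i).

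For part~(ii), I would first observe that the standing hypothesis forces $M$-accordance. In the IC2/power-law branch this is precisely the contrapositive of Theorem~\ref{gen_powlaw_CRN}(c): were $\mathcal{R}$ not $M$-accordant it would fail IC2 for some rate constant. In the \apost{IC2}/physical-power-law branch, note first that $M \in \mathcal{Q}(\Gamma_l)$ forces $M \geq 0$; since \apost{IC2} $\Rightarrow$ IC2 (Lemma~\ref{lemICrels}), a failure of IC2 would also violate the hypothesis, so again the contrapositive of Theorem~\ref{gen_powlaw_CRN}(c) yields $M$-accordance. Next I would rewrite the conclusion: by the equivalence (i) $\Leftrightarrow$ (iii) of Theorem~\ref{gen_powlaw_CRN}(a), $\mathcal{R}$ satisfies IC1 for all rate constants iff it is $M$-concordant, and, when $M \geq 0$, the equivalence (i) $\Leftrightarrow$ (iv) shows $\mathcal{R}$ satisfies IC1a for all rate constants iff it is $M$-concordant. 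In each branch the asserted biconditional is therefore exactly ``$M$-concordant $\Leftrightarrow$ $M$-normal'' under the standing assumption of $M$-accordance, which is the content of part~(i).

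I expect no substantive obstacle; the only genuine care required is bookkeeping. One must keep the two ``resp.'' branches aligned (IC2 / power-law / IC1 versus \apost{IC2} / physical-power-law / IC1a, the latter relying on $M \geq 0$ so that Theorem~\ref{gen_powlaw_CRN}(a)(iv) applies) and track the harmless sign flip between $\Gamma \rst M$ and $\Gamma \rst -M$. Once the injectivity conditions are replaced by their algebraic equivalents from Theorem~\ref{gen_powlaw_CRN}, everything reduces to the one-line observation that ``$\geq 0$ together with $\neq 0$'' is the definition of ``$> 0$''.
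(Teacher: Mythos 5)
Your proposal is correct and follows essentially the same route as the paper: part (i) is the observation that $M$-accordance forces $\Gamma \rst -M \geq 0$, ruling out the negative branch of concordance, so that normality ($\neq 0$) is exactly what upgrades this to $\Gamma \rst -M > 0$; part (ii) translates the injectivity hypotheses and conclusions into $M$-accordance and $M$-concordance via Theorem~\ref{gen_powlaw_CRN} (using Lemma~\ref{lemICrels} in the \apost{IC2} branch) and reduces to part (i). The only difference is that you spell out the details the paper leaves implicit (e.g.\ the sign-flip $(-M)^{(r)} = (-1)^r M^{(r)}$ and the use of the contrapositive of Theorem~\ref{gen_powlaw_CRN}(c)), which is harmless.
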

\begin{proof}
(i) Observe that $M$-accordance ($\Gamma \Bumpeq -M$) rules out $\Gamma \rst -M < 0$, and implies $\Gamma \rst -M > 0$ if and only if $\Gamma \rst M \neq 0$. Thus $M$-accordance implies $M$-concordance if and only if $\mathcal{R}$ is $M$-normal. (ii) By Theorem~\ref{gen_powlaw_CRN}, injectivity of $\mathcal{R}$ in the sense of IC2 or \apost{IC2} (depending on kinetics) is equivalent to $M$-accordance, and injectivity of $\mathcal{R}$ in the sense of IC1 or \apost{IC1} (depending on kinetics) is equivalent to $M$-concordance. The result thus follows from (i).
\hfill \end{proof}

\begin{remark}[Related results: injectivity of a CRN with mass action kinetics from injectivity of its fully open extension]
\label{remnormal}
The particular case of Corollary~\ref{cornormal} for mass action kinetics (namely where $M = \Gamma_l$) is the subject of the main theorem (Theorem 8.2) of \cite{Craciun.2010ac}. 
\end{remark}

\subsection{Injectivity of simple CRNs with mass action kinetics}
\label{secMA1}
Results in Banaji et al. \cite{banajiSIAM} on the special case of a simple CRNs with mass action kinetics motivate the following definitions. 
\begin{def1}[WSD, $r$-strongly WSD, $r$-strongly negatively WSD]
Observe that if $\Gamma$ is the irreversible stoichiometric matrix of a simple CRN, then $\Gamma_r = \Gamma_{+}$ and $\Gamma_l = \Gamma_{-}$. A matrix $\Gamma$ with rank $r \geq 1$ is termed {\em $r$-strongly WSD} if $\Gamma \rst {-(\Gamma_{-})} > 0$ and {\em $r$-strongly negatively WSD} if $\Gamma \rst -(\Gamma_{-}) < 0$. It is WSD if $\Gamma \Bumpeq {-(\Gamma_{-})}$.
\end{def1}

\begin{remark}[WSD matrices]
The acronym WSD was originally an abbreviation of ``weakly sign determined'' in  \cite{banajiSIAM}, where it was shown that every SSD matrix is WSD, but not vice versa. An example of a matrix of rank $r$ which is $r$-strongly negatively WSD is:
\[
\Gamma = \left(\begin{array}{rr}-1&2\\1&-1\end{array}\right)\quad \mbox{so that} \quad -(\Gamma_{-}) = \left(\begin{array}{rr}-1&0\\0&-1\end{array}\right).
\]
We see that $\Gamma$ has rank $2$ and is $2$-strongly negatively WSD as $(\mathrm{det}\,\Gamma)(\mathrm{det}\,(-(\Gamma_{-}))) < 0$. An example of a WSD matrix that is not $r$-strongly WSD is:
\[
\Gamma = \left(\begin{array}{rr}-1&-1\\0&1\\1&0\end{array}\right)\quad \mbox{so that} \quad -(\Gamma_{-}) = \left(\begin{array}{rr}-1&-1\\0&0\\0&0\end{array}\right).
\]
It is easy to see that $\Gamma$ is WSD. However, as $\mathrm{rank}\,\Gamma > \mathrm{rank}\,\Gamma_{-}$ it cannot be $2$-strongly WSD. An example of a matrix which is $r$-strongly WSD, but not WSD is:
\[
\Gamma = \left(\begin{array}{rrr}-1&0&0\\2&-1&0\\-1&1&-1\end{array}\right)\quad \mbox{so that} \quad -(\Gamma_{-}) = \left(\begin{array}{rrr}-1&0&0\\0&-1&0\\-1&0&-1\end{array}\right)\,.
\]
$\Gamma$ has rank $3$ and is $3$-strongly WSD as $(\mathrm{det}\,\Gamma)(\mathrm{det}\,(-(\Gamma_{-}))) > 0$. But 
\[
\Gamma[\{2,3\}|\{1,2\}]\,(-(\Gamma_{-}))[\{2,3\}|\{1,2\}] < 0
\]
and so it is not WSD.
\end{remark}

For reference when discussing examples, we write out in full the following specialisation of Theorem~\ref{gen_powlaw_CRN} to the case of simple CRNs with mass action kinetics.
\begin{thm}
\label{thmWSD}
Consider a simple CRN $\mathcal{R}$ with irreversible stoichiometric matrix $0 \neq \Gamma \in \mathbb{Z}^{n \times m}$ and mass action kinetics. Let $\mathcal{R}_o$ be the fully open extension of $\mathcal{R}$. 
\begin{enumerate}[align=left,leftmargin=*]
\item[(a)] Let $r = \mathrm{rank}\,\Gamma$. The following statements are equivalent:
\begin{enumerate}[align=left,leftmargin=*]
\item[(i)] $\Gamma$ is $r$-strongly WSD or $r$-strongly negatively WSD (namely, $\Gamma \rst {-(\Gamma_{-})} > 0$ or $\Gamma \rst {-(\Gamma_{-})} < 0$).
\item[(ii)] $\mathrm{rank}\,(\Gamma D_1 \Gamma_{-}^t D_2 \Gamma) = \mathrm{rank}\,\Gamma$ for all $D_1 \in \mathcal{D}_m$ and $D_2 \in \mathcal{D}_n$ (i.e., $\mathcal{Q}'(\Gamma_{-}^t)$ is $\Gamma$-nonsingular).
\item[(iii)] For all rate constants $\mathcal{R}$ satisfies claim IC1.
\item[(iv)] For all rate constants $\mathcal{R}$ satisfies claim IC1a.
\end{enumerate}
If $\mathcal{R}$ is weakly reversible, these are additionally equivalent to
\begin{enumerate}[align=left,leftmargin=*]
\item[(v)] $\Gamma$ is $r$-strongly WSD (namely, $\Gamma \rst {-(\Gamma_{-})} > 0$).
\end{enumerate}
\item[(b)] If $\Gamma$ is WSD, then for all rate constants $\mathcal{R}$ satisfies conditions \apostt{IC2}: for arbitrary rate constants and inflows and outflows, $\mathcal{R}_o$ is injective on $\mathbb{R}^n_{\geq 0}$. 
\item[(c)] If $\Gamma$ is not WSD, then for some choice of rate constants $\mathcal{R}$ fails condition IC2. In particular, there exist $E \in \mathcal{D}_m$, $D \in \mathcal{D}_n$, and $x, y \gg 0$, $x \neq y$, such that $\Gamma E\mathrm{exp}(\Gamma_{-}^t\ln x) - Dx = \Gamma E\mathrm{exp}(\Gamma_{-}^t\ln y) - Dy$.
\item[(d)] If $-(\Gamma_{-})$ is strongly $\Gamma$-incompatible (Definition~\ref{defstrongincompat}), then $\mathcal{R}_o$ admits multiple positive equilibria. In particular, there exist $E \in \mathcal{D}_m$, $D \in \mathcal{D}_n$, $c \gg 0$, and $x, y \gg 0$, $x \neq y$, such that $c + \Gamma E\mathrm{exp}(\Gamma_{-}^t\ln x) - Dx = c + \Gamma E\mathrm{exp}(\Gamma_{-}^t\ln y) - Dy = 0$.
\end{enumerate}
\end{thm}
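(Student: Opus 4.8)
The plan is to obtain the whole theorem as the specialisation $M = \Gamma_{-}$ of Theorem~\ref{gen_powlaw_CRN}. The key preliminary, already recorded in the discussion preceding the statement, is that for a simple CRN the left stoichiometric matrix is exactly the negative part of $\Gamma$, i.e.\ $\Gamma_l = \Gamma_{-}$ (and $\Gamma_r = \Gamma_{+}$). Since mass action kinetics is $M$-power-law kinetics with $M = \Gamma_l$ (Definition~\ref{defMACRN}), the relevant matrix of exponents throughout is $M = \Gamma_l = \Gamma_{-}$. Under this identification, $M$-concordance becomes ``$\Gamma \rst -(\Gamma_{-}) > 0$ or $\Gamma \rst -(\Gamma_{-}) < 0$'' (i.e.\ $\Gamma$ is $r$-strongly WSD or $r$-strongly negatively WSD), $M$-accordance ($\Gamma \Bumpeq -M$) becomes WSD ($\Gamma \Bumpeq -(\Gamma_{-})$), and strong $\Gamma$-incompatibility of $-M$ becomes that of $-(\Gamma_{-})$.

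With these translations in hand, I would first dispatch part (a) directly from Theorem~\ref{gen_powlaw_CRN}(a): the equivalence of (i), (ii) and (iii) is immediate, and because $\Gamma_{-} \geq 0$ we additionally obtain the equivalence with claim IC1a, which is condition (iv). Parts (b), (c) and (d) then follow in the same routine fashion from Theorem~\ref{gen_powlaw_CRN}(b), (c) and (d) respectively. The one point worth flagging is in (b): since $M = \Gamma_{-}$ is a nonnegative integer matrix, its nonzero entries are automatically $\geq 1$, which places us in the strongest case of Theorem~\ref{gen_powlaw_CRN}(b) and yields \apostt{IC2} rather than merely IC2 or \apost{IC2}.

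The one genuinely additional ingredient is the extra equivalence (v) in the weakly reversible case, asserting that the disjunction in (i) collapses to the single condition $\Gamma \rst -(\Gamma_{-}) > 0$. For this I would appeal to Lemma~\ref{lemWRconcord}(ii). For a simple CRN, semiconcordance in the sense of Definition~\ref{defconcord} is exactly $\Gamma_l$-concordance, equivalently $\Gamma_{-}$-concordance, which is precisely the disjunction of (i); and Lemma~\ref{lemWRconcord}(ii) states that for a weakly reversible CRN semiconcordance holds if and only if $\Gamma \rst -\Gamma_l > 0$, i.e.\ $\Gamma \rst -(\Gamma_{-}) > 0$. Thus weak reversibility rules out the $r$-strongly negatively WSD alternative, collapsing (i) to (v). I expect no real obstacle here, as Lemma~\ref{lemWRconcord} already carries the analytic weight (it rests on the existence, for a weakly reversible network, of a positive diagonal $D$ with $\mathrm{det}_\Gamma(-\Gamma D \Gamma_l^t) > 0$); the only care needed is to check that the ``semiconcordance'' condition matches the $M$-concordance condition under $M = \Gamma_{-}$, which is immediate.
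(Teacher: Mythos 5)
Your proposal is correct and follows essentially the same route as the paper's own proof: both obtain (a)--(d) by specialising Theorem~\ref{gen_powlaw_CRN} to $M = \Gamma_l = \Gamma_{-}$ (using that simplicity gives $\Gamma_l = \Gamma_{-}$), and both derive (v) from Lemma~\ref{lemWRconcord}, which shows weak reversibility excludes the $r$-strongly negatively WSD alternative. Your added remark in (b) — that the nonzero entries of the nonnegative integer matrix $\Gamma_{-}$ are automatically $\geq 1$, placing you in the strongest case of Theorem~\ref{gen_powlaw_CRN}(b) and yielding \apostt{IC2} — is a correct and worthwhile detail that the paper leaves implicit.
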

\begin{proof}
(a) Note that by definition the condition that ``$\Gamma$ is $r$-strongly WSD or $r$-strongly negatively WSD'' is equivalent to ``$\mathcal{R}$ is semiconcordant''. Equivalence of (i) to (iv) is immediate from the definitions and Theorem~\ref{gen_powlaw_CRN}(a) with $M = \Gamma_{-}$. Equivalence of (i) and (v) follows once we observe that for simple, weakly reversible CRNs, semiconcordance is equivalent to $\Gamma \rst {-(\Gamma_{-})} > 0$ by Lemma~\ref{lemWRconcord}. (b) By definition, $\Gamma$ is WSD if and only if $\mathcal{R}$ is semiaccordant. The result is now a special case of Theorem~\ref{gen_powlaw_CRN}(b). (c) and (d) follow from Theorem~\ref{gen_powlaw_CRN}(c)~and~(d) with $M = \Gamma_{-}$.
\hfill
\end{proof}

\begin{remark}[Related results]
The result in Theorem~\ref{thmWSD}(b) is a corollary of the results in Section~4 of Banaji et al. \cite{banajiSIAM}. The result in Theorem~\ref{thmWSD}(d) can be inferred from Theorem~4.1 in Craciun and Feinberg \cite{craciun}. 
\end{remark}

Figure~\ref{fig:implications} summarises some of the results on injectivity and the absence of multiple positive equilibria (MPE) for a system of irreversible reactions. Figure~\ref{fig:implications1} summarises some of the results for fully open systems.

\begin{figure}
\begin{tikzpicture}[rotate=270, scale=0.98] 
\node (conc) [draw, thick, minimum width=2cm,minimum height=1cm, anchor=base, align=center,rotate=90, minimum width=3.5cm,minimum height=1cm, fill=ulightgray] at (12.25,-0.15) {\large\bf concordance};
\node (msemi) [draw, thick, minimum width=2cm,minimum height=1cm, anchor=base, align=center,rotate=90, minimum width=3.5cm,minimum height=1.5cm, fill=ulightgray] at (4,0) {{\large\bf $M$-concordance}\\{\large\bf (fixed $M\in {\cal Q}(\Gamma_l)$)}};
\node (semi) [draw, thick, minimum width=2cm,minimum height=1cm, anchor=base, align=center,rotate=90, minimum width=3.5cm,minimum height=1.5cm, fill=ulightgray] at (0,0) {{\large\bf semiconcordance}\\{\large\bf($M =\Gamma_l)$}};
\node (injgk) [draw, thick, minimum width=2cm,minimum height=1cm, anchor=base, align=center,rotate=90, text width=4cm, minimum width=2cm,minimum height=2cm, ] at (15,4) {\centering\large\bf injectivity\\for general\\ kinetics on $\mathbb{R}^n_{\gg 0}$};
\node (injpl) [draw, thick, minimum width=2cm,minimum height=1cm, anchor=base, align=center,rotate=90, text width=4cm,minimum width=2cm,minimum height=2cm] at (9.5,4) {\centering\large\bf injectivity \\for all physical\\ power-law kinetics};
\node (injmpl) [draw, thick, minimum width=2cm,minimum height=1cm, anchor=base, align=center,rotate=90, text width=3.7cm, minimum width=3.5cm,minimum height=2cm] at (4,4) {\centering\large\bf injectivity \\for $M$-power-law\\ kinetics};
\node (injma) [draw, thick, minimum width=2cm,minimum height=1cm, anchor=base, align=center,rotate=90, text width=3.7cm,minimum width=3.5cm,minimum height=2cm] at (0,4) {\centering\large\bf injectivity \\for mass action \\kinetics};
\node (mpegk) [draw, thick, minimum width=2cm,minimum height=1cm, anchor=base, align=center,rotate=90, text width=4cm,minimum width=2cm,minimum height=2cm] at (15,8) {\centering\large\bf absence of MPE\\ for general\\ kinetics on $\mathbb{R}^n_{\gg 0}$};
\node (mpepl) [draw, thick, minimum width=2cm,minimum height=1cm, anchor=base, align=center,rotate=90, text width=4cm,minimum width=2cm,minimum height=2cm] at (9.5,8) {\centering\large\bf absence of MPE\\ for all physical\\ power-law kinetics};
\node (mpempl) [draw, thick, minimum width=2cm,minimum height=1cm, anchor=base, align=center,rotate=90, text width=3.8cm, minimum width=3.5cm,minimum height=2cm] at (4,8) {\centering\large\bf  absence of MPE \\for $M$-power-law\\ kinetics};
\node (mpema) [draw, thick, minimum width=2cm,minimum height=1cm, anchor=base, align=center,rotate=90, text width=3.5cm,minimum width=3.5cm,minimum height=2cm] at (0,8) {\centering\large\bf absence of MPE\\for mass action\\ kinetics};
\node (extra) [draw, thick, minimum width=2cm,minimum height=1cm, anchor=base, align=center,rotate=90, text width=3.5cm,minimum width=4cm,minimum height=1.5cm] at (9.5,11.7) {\centering\large\bf concordance or\\ $\ker\Gamma\cap {\mathbb R}_{\gg 0}^n=\emptyset$};

\node  [minimum width=2cm,minimum height=1cm, anchor=base, align=center,rotate=90, text width=3.5cm,minimum width=4cm,minimum height=1.5cm] at (12.25,2.5) {Cor.~\ref{corequivs}};

\draw[densely dashed](-2.4,-1.5) rectangle +(9,2.6);
\draw[densely dashed](-2.4,2.8) rectangle +(9,3);
\draw[densely dashed](-2.4,6.8) rectangle +(9,3);

\draw[double,thick,-implies, double distance=4pt, shorten > =.7cm] (conc.east) -- (msemi.west);
\draw[double,thick,-implies, double distance=4pt, shorten > =.5cm] (injpl.east) -- (injmpl.west);
\draw[double,thick,-implies, double distance=4pt, shorten > =.5cm] (mpepl.east) -- (mpempl.west);

\draw[double,thick,implies-implies, double distance=4pt] (semi.south) -- node[pos=.5, rotate=90, fill=white] {Thm.~\ref{gen_powlaw_CRN}} (injma.north);
\draw[double,thick,implies-implies, double distance=4pt] (msemi.south) -- node[pos=.5, rotate=90, fill=white] {Thm.~\ref{gen_powlaw_CRN}} (injmpl.north);
\draw[double,thick,-implies, double distance=4pt] (injma.south) -- (mpema.north);
\draw[double,thick,-implies, double distance=4pt] (injmpl.south) -- (mpempl.north);
\draw[double,thick,-implies, double distance=4pt] (injgk.south) -- (mpegk.north);
\draw[double,thick,-implies, double distance=4pt] (injpl.south) -- (mpepl.north);
\draw[double,thick,-implies, double distance=4pt] (mpepl.south) -- node[pos=.5, rotate=90, fill=white] {Thm.~\ref{thmnoninjgen}} (extra.north);

\draw[double,thick,implies-implies, double distance=4pt] (injgk.east) -- (injpl.west);
\draw[double,thick,-implies, double distance=4pt] (mpegk.east) -- (mpepl.west);
\draw[double,thick,implies-implies, double distance=4pt] (conc) -- (injpl);
\draw[double,thick,implies-implies, double distance=4pt] (conc) -- (injgk);

\node [minimum width=2cm,minimum height=1cm, anchor=base, align=left,rotate=90, minimum width=3.5cm,minimum height=1cm] at (7.5,15) {{\bf MPE}: multiple positive equilibria on a stoichiometry class;\\ {\bf $M$-power-law kinetics}: power-law kinetics with matrix of exponents $M^t$ (here restricted to $M\in {\cal Q}(\Gamma_l)$);\\{\bf concordance}: $\Gamma \circ^r M>0\ \forall M\in {\cal Q}(\Gamma_l)$ or $\Gamma \circ^r M<0\  \forall M\in {\cal Q}(\Gamma_l)$;\\ {\bf $M$-concordance}: $\Gamma \circ^r M>0$\ or $\Gamma \circ^r M<0$; {\bf semiconcordance}: $\Gamma \circ^r \Gamma_l>0$\ or $\Gamma \circ^r \Gamma_l<0$;};

\end{tikzpicture}
\caption{A schematic summarising some results on injectivity and the absence of multiple positive equilibria (MPE) on a stoichiometry class for a CRN with irreversible stoichiometric matrix $\Gamma \in \mathbb{R}^{n \times m}$ and corresponding left stoichiometric matrix $\Gamma_l\in \mathbb{R}^{n \times m}$. Results on fully open systems are gathered in Figure~\ref{fig:implications1}, and specialisations are omitted. The implications without labels follow immediately from other implications or from the definitions.}\label{fig:implications}
\end{figure}

\begin{figure}[p]
\begin{tikzpicture}[rotate=270, scale=0.98] 
\node (comp) [draw, thick, minimum width=5cm,minimum height=1cm, anchor=base, align=center,rotate=90, text width=5cm, minimum width=5cm,minimum height=1cm, fill=ulightgray] at (13,0) {\centering\large\bf Accordance: $\Gamma \Bumpeq \mathcal{Q}({-\Gamma_l})$};
\node (IC2gen) [draw, thick, minimum width=5cm,minimum height=1cm, anchor=base, align=center,rotate=90, text width=5cm, minimum width=5cm,minimum height=1.5cm] at (13,2) {\centering\large\bf injectivity of fully open system for positive general kinetics};
\node (IC2PPL) [draw, thick, minimum width=5cm,minimum height=1cm, anchor=base, align=center,rotate=90, text width=5cm, minimum width=5cm,minimum height=1.5cm] at (13,4.5) {\centering\large\bf injectivity of fully open system for all physical power-law kinetics};
\node (MP3PPL) [draw, thick, minimum width=5cm,minimum height=1cm, anchor=base, align=center,rotate=90, text width=5cm, minimum width=5cm,minimum height=1.5cm] at (13,7.5) {\centering\large\bf absence of MPE in fully open system for all physical power-law kinetics};
\draw[double,thick,implies-implies, double distance=4pt] (comp.south) -- (IC2gen.north);
\draw[double,thick,implies-implies, double distance=4pt] (IC2gen.south) -- (IC2PPL.north);
\draw[double,thick,implies-implies, double distance=4pt] (IC2PPL.south) -- (MP3PPL.north);
\node (legend) [minimum width=5cm,minimum height=1cm, anchor=base, align=center,rotate=90, text width=5cm, minimum width=5cm,minimum height=1.5cm] at (13,13) {\centering\large\bf (Cor.~\ref{corequivs1})};

\node (comp1) [draw, thick, minimum width=5.2cm,minimum height=1cm, anchor=base, align=center,rotate=90, text width=5cm, minimum width=5.2cm,minimum height=1cm, fill=ulightgray] at (6,0) {\centering\large\bf $\Gamma \Bumpeq -M$ ($M \in \mathcal{Q}(\Gamma_l)$)};
\node (IC2gen1) [draw, thick, minimum width=5cm,minimum height=1.5cm, anchor=base, align=center,rotate=90, text width=5cm, minimum width=5cm,minimum height=1.5cm] at (6,4.5) {\centering\large\bf injectivity of fully open system for M-power-law kinetics};
\node (IC2PPL1) [draw, thick, minimum width=5cm,minimum height=1cm, anchor=base, align=center,rotate=90, text width=5cm, minimum width=5cm,minimum height=1.5cm] at (6,7.7) {\centering\large\bf absence of MPE in fully open system for M-power-law kinetics};
\node (MP3PPL1) [draw, thick, minimum width=5cm,minimum height=1cm, anchor=base, align=center,rotate=90, text width=5cm, minimum width=5cm,minimum height=1.5cm] at (6,11) {\centering\large\bf $-M$ is not strongly $\Gamma$-incompatible};
\draw[double,thick,implies-implies, double distance=4pt] (comp1.south) -- (IC2gen1.north);
\draw[double,thick,-implies, double distance=4pt] (IC2gen1.south) -- (IC2PPL1.north);
\draw[double,thick,-implies, double distance=4pt] (IC2PPL1.south) -- (MP3PPL1.north);
\node (legend) [minimum width=5cm,minimum height=1cm, anchor=base, align=center,rotate=90, text width=5cm, minimum width=5cm,minimum height=1.5cm] at (6,13) {\centering\large\bf (Thm.~\ref{gen_powlaw_CRN})};

\node (comp2) [draw, thick, minimum width=5.2cm,minimum height=1cm, anchor=base, align=center,rotate=90, text width=5cm, minimum width=5.2cm,minimum height=1cm, fill=ulightgray] at (0.5,0) {\centering\large\bf $\Gamma \Bumpeq {-\Gamma_l}$};
\node (IC2gen2) [draw, thick, minimum width=5cm,minimum height=1.5cm, anchor=base, align=center,rotate=90, text width=5cm, minimum width=5cm,minimum height=1.5cm] at (0.5,4.55) {\centering\large\bf injectivity of fully open system for mass action kinetics};
\node (IC2PPL2) [draw, thick, minimum width=5cm,minimum height=1cm, anchor=base, align=center,rotate=90, text width=5cm, minimum width=5cm,minimum height=1.5cm] at (0.5,7.75) {\centering\large\bf absence of MPE in fully open system for mass action kinetics};
\node (MP3PPL2) [draw, thick, minimum width=5cm,minimum height=1cm, anchor=base, align=center,rotate=90, text width=5cm, minimum width=5cm,minimum height=1.5cm] at (0.5,11) {\centering\large\bf ${-\Gamma_l}$ is not strongly $\Gamma$-incompatible};
\draw[double,thick,implies-implies, double distance=4pt] (comp2.south) -- (IC2gen2.north);
\draw[double,thick,-implies, double distance=4pt] (IC2gen2.south) -- (IC2PPL2.north);
\draw[double,thick,-implies, double distance=4pt] (IC2PPL2.south) -- (MP3PPL2.north);
\node (legend) [minimum width=5cm,minimum height=1cm, anchor=base, align=center,rotate=90, text width=5cm, minimum width=5cm,minimum height=1.5cm] at (0.5,13) {\centering\large\bf (Thm.~\ref{gen_powlaw_CRN})};
\draw[densely dashed](-2.5,-1) rectangle +(11.5,1.75);
\draw[double,thick,-implies, double distance=4pt, shorten > =.3cm] (comp.east) -- (comp1.west);

\draw[densely dashed](-2.5,3.65) rectangle +(11.5,2.3);
\draw[double,thick,-implies, double distance=4pt, shorten > =.3cm] (IC2PPL.east) -- (IC2gen1.west);

\draw[densely dashed](-2.5,6.9) rectangle +(11.5,2.25);
\draw[double,thick,-implies, double distance=4pt, shorten > =.3cm] (MP3PPL.east) -- (IC2PPL1.west);

\node [minimum width=2cm,minimum height=1cm, anchor=base, align=left,rotate=90, minimum width=3.5cm,minimum height=1cm] at (7.5,15.5) {{\bf MPE}: multiple positive equilibria;\\ {\bf $M$-power-law kinetics}: power-law kinetics with matrix of exponents $M^t$ (here restricted to $M\in {\cal Q}(\Gamma_l)$);\\{\bf $B$ is strongly $A$-incompatible}: there exists $D\in \mathcal{D}_{n+m}$ such that $\mathrm{det}(\tilde AD \tilde B^t) < 0$ and $\tilde AD\mathbf{1} \leq 0$\\ (where $\tilde A = [A|{-I}]$ and $\tilde B = [B|{-I}]$) -- see Definition~\ref{defstrongincompat});};

\end{tikzpicture}
\caption{A schematic summarising some results on injectivity and the absence of multiple positive equilibria (MPE) for a fully open CRN with irreversible stoichiometric matrix $\Gamma \in \mathbb{R}^{n \times m}$ and corresponding left stoichiometric matrix $\Gamma_l\in \mathbb{R}^{n \times m}$. The implications without labels follow immediately from other implications or from the definitons.}\label{fig:implications1}
\end{figure}

\section{Extensions and examples}
\label{secexamples}

We examine some examples chosen to demonstrate the subtleties or limitations of the various results above. In some cases techniques in the literature beyond the scope of this paper augment or clarify or expand the conclusions: particularly worth mentioning are deficiency theory, and applications of the theory of monotone dynamical systems to CRNs. All computations are carried out in {\tt CoNtRol} \cite{control}. Before presenting the examples we list some conditions which may strengthen conclusions about injectivity or multistationarity of a CRN. The first additional condition which may apply is:
\begin{enumerate}[align=left,leftmargin=*]
\item[BC1.] Stoichiometry classes are bounded. 
\end{enumerate}
It is well known that BC1 holds if and only if $\mathrm{ker}\,\Gamma^t \cap \mathbb{R}^n_{\gg 0} \neq \emptyset$ (Lemma~\ref{lembdclass} in Appendix~\ref{appadditional}) and implies that each stoichiometry class is a nonempty compact, convex polyhedron and hence, by the Brouwer fixed point theorem, includes an equilibrium of (\ref{reacsys}). It sometimes occurs that:
\begin{enumerate}[align=left,leftmargin=*]
\item[PC0.] The CRN admits no positive equilibria (Definition~\ref{defposeq}).
\end{enumerate}
If $\overline{\Gamma}\in \mathbb{R}^{n \times m}$ is the irreversible stoichiometric matrix of a CRN and $\mathrm{ker}\,\overline{\Gamma} \cap \mathbb{R}^n_{\gg 0} = \emptyset$, then claim PC0 follows for all classes of kinetics considered in this paper, whereas otherwise the CRN admits a positive equilibrium for mass action kinetics with some choice of rate constants  (Lemma~\ref{lemnoposeq} in Appendix~\ref{appadditional}). So PC0 is equivalent to $\mathrm{ker}\,\overline{\Gamma} \cap \mathbb{R}^n_{\gg 0} = \emptyset$. Perhaps more interesting are:
\begin{enumerate}[align=left,leftmargin=*]
\item[PC1.] No stoichiometry class, other than possibly a stoichiometry class consisting only of $\{0\}$, includes any equilibria on $\partial \mathbb{R}^n_{\geq 0}$.
\item[PC2.] No nontrivial stoichiometry class includes any equilibria on $\partial \mathbb{R}^n_{\geq 0}$.
\end{enumerate}
Observe that (i) PC1 implies that the only possible equilibrium on $\partial \mathbb{R}^n_{\geq 0}$ is $0$, and (ii) PC1 implies PC2. Claims PC1 and PC2 are reached via examination of the so-called ``siphons'' of the system (see \cite{angelipetrinet, shiusturmfels} for example). PC2 holds if the CRN has no critical siphons; PC1 holds if the system has no siphons at all, other than possibly the set of all species, in which case this siphon is non-critical. The details are in Appendix~\ref{appadditional}. We recall that claim PC2 holds automatically if we know that the CRN is concordant and weakly reversible (Remark~\ref{concordtopersist} and Appendix~\ref{appWRconcord}).

\begin{remark}[Implications of \apost{IC1} combined with persistence and boundedness]
\label{remplus}
Note first that \apostt{IC1} $\Rightarrow$ \apost{IC1} and IC1a $\Rightarrow$ \apost{IC1}, so the observations in this remark apply if we replace \apost{IC1} with \apostt{IC1} or IC1a. Claims \apost{IC1} and PC2 (or PC1) together imply that no nontrivial stoichiometry class includes more than one equilibrium. If, additionally, BC1 holds (namely, stoichiometry classes are bounded), then each nontrivial stoichiometry class includes a unique equilibrium, and this equilibrium is positive. Claims \apost{IC1}, PC1 and BC1 together imply, by the Brouwer fixed point theorem, that each stoichiometry class other than $\{0\}$ contains a unique equilibrium, which is positive (an indirect consequence of BC1 and PC1 is that all stoichiometry classes other than $\{0\}$ must in fact be nontrivial). In summary, we have the implications:
\begin{enumerate}[align=left,leftmargin=*]
\item \apost{IC1} + PC2 $-$ BC1: no nontrivial stoichiometry class includes more than one equilibrium (they may have no equilibria). An equilibrium on a nontrivial stoichiometry class, if it exists, must be positive.
\item \apost{IC1} + PC2 + BC1: each nontrivial stoichiometry class includes exactly one equilibrium; this equilibrium is positive.
\item \apost{IC1} + PC1 $-$ BC1: no stoichiometry class includes more than one equilibrium (they may have no equilibria). Equilibria, if any, are positive.
\item \apost{IC1} + PC1 + BC1: each stoichiometry class other than $\{0\}$ is nontrivial, and includes exactly one equilibrium; this equilibrium is positive. 
\end{enumerate}
\end{remark}

\subsection{Examples of simply reversible CRNs}
In the examples to follow, we report mainly conclusions for general kinetics, and for mass action kinetics. However, the reader may easily infer similar conclusions for weak general kinetics, positive general kinetics, power-law kinetics, or physical power-law kinetics, using the theorems and lemmas above.

\begin{example}[The strongest possible claims I]
\label{exbasic}
$A+B\rightleftharpoons C,\,\, 2A\rightleftharpoons B$. The stoichiometric matrix $\Gamma$, $-Dv^t$, and SR graph $G_\Gamma$ are shown:

\begin{tikzpicture}[domain=0:4,scale=0.4]
\node at (-22,0) {$\Gamma = \left(\begin{array}{rr}-1&-2\\-1&1\\1&0\end{array}\right)$};

\node at (-12,0) {$-Dv^t = \left(\begin{array}{rr}-&-\\-&+\\+&0\end{array}\right)$};

\node at (-5,0) {$G_\Gamma = $};
\fill (-1.5,1.5) circle (5pt);
\node at  (1.5,1.5) {$B$};

\node at (-3.3,2.4) {$C$}; \draw[-, line width=0.04cm] (-2.8,2.25)
-- (-1.9, 1.75); 
\fill (1.5,-1.5) circle (5pt); \node at (-1.5,-1.5) {$A$};

\path (135-15: 2.12cm) coordinate (A1end); \draw[-, dashed, line
width=0.04cm] (A1end)  arc (135-15:45+15:2.12cm);

\path (225-15: 2.12cm) coordinate (A2end); \draw[-, dashed, line
width=0.04cm] (A2end)  arc (225-15:135+15:2.12cm);

\path (315-15: 2.12cm) coordinate (A3end); \draw[-, dashed, line
width=0.04cm] (A3end)  arc (315-15:225+15:2.12cm);
\node at (0,-1.75) {$\scriptstyle{2}$};

\path (45-15: 2.12cm) coordinate (A4end); \draw[-, line
width=0.04cm] (A4end)  arc (45-15:-45+15:2.12cm);

\end{tikzpicture}\\
{\tt Report.} General kinetics. $G_\Gamma$ satisfies Condition~($*$). By Theorem~\ref{thmCRN1} both claims \apostt{IC1} and \apostt{IC2} hold. As PC1 and BC1 also hold, each stoichiometry class other than $\{0\}$ contains a unique equilibrium, which is positive (Remark~\ref{remplus}).

{\tt Remark.} In fact, claims \apostt{IC1} and \apostt{IC2} hold if the species participate in these reactions with any stoichiometries, rather than the particular values chosen, and if one or both reactions are set to be irreversible (in either direction); the CRN remains accordant and concordant and \apostt{IC1} and \apostt{IC2} follow by Theorem~\ref{thmnoninjgen}. As with several examples to follow, various other tools allow conclusions about the network beyond questions of injectivity or multistationarity. This network is weakly reversible with deficiency zero and the stoichiometric subspace has dimension $2$: by Theorem 6.3 in Pantea \cite{panteapersistence}, assuming mass action kinetics, the unique equilibrium on each nontrivial stoichiometry class is in fact globally asymptotically stable relative to its stoichiometry class.
\end{example}

\begin{example}[The strongest possible claims II]
$A+B\rightleftharpoons C\rightleftharpoons A+D,\,\,E+B\rightleftharpoons F\rightleftharpoons E+D$. This is the reversible version of the so-called ``futile cycle'' presented in Example~\ref{exfutile} later. The stoichiometric matrix $\Gamma$ and SR graph $G_\Gamma$ are shown:

\begin{tikzpicture}[domain=0:4,scale=0.45]
\node at (-11,4.5) {$\Gamma = \left(\begin{array}{rrrr}-1&1&0&0\\-1&0&-1&0\\1&-1&0&0\\0&1&0&1\\0&0&-1&1\\0&0&1&-1\end{array}\right)$};

\node at (-1,4.5) {$G_\Gamma = $};

\node at (1,1.5) {$B$};
\fill (1,5) circle (4pt);
\fill (5,1.5) circle (4pt);
\node at (1,7.5) {$C$};
\fill (4,7.5) circle (4pt);
\node at (8,7.5) {$D$};
\node at (4,5) {$A$};
\node at (5,4) {$E$};
\node at (8,1.5) {$F$};
\fill (8,4) circle (4pt);
\draw[-, thick] (1.5,1.5) -- (4.6,1.5);
\draw[-, thick, dashed] (5.4,1.5) -- (7.3,1.5);
\draw[-, thick, dashed] (5.5,4) -- (7.6,4);

\draw[-, thick, dashed] (1,2.1) -- (1,4.6);
\draw[-, thick] (1, 5.4) -- (1,7);
\draw[-, thick] (4, 5.5) -- (4,7.1);

\draw[-, thick, dashed] (1.7,7.5) -- (3.6,7.5);
\draw[-, thick] (4.4,7.5) -- (7.4,7.5);

\draw[-, thick, dashed] (1.4,5) -- (3.5,5);

\draw[-, thick] (8,2.1) -- (8,3.6);
\draw[-, thick, dashed] (8, 4.4) -- (8,6.9);

\draw[-, thick] (5,1.9) -- (5,3.55);

\end{tikzpicture}\\
 
{\tt Report.} General kinetics. $G_\Gamma$ satisfies Condition~($*$), and $\Gamma$ is hence SSD and $r$-SSD by Theorem~\ref{thmCRN1}. Thus both claims \apostt{IC1} and \apostt{IC2} hold. As the system is simply reversible, PC2 is automatic (Remark~\ref{concordtopersist}), and as BC1 also holds, each nontrivial stoichiometry class contains a unique equilibrium, which is positive (Remark~\ref{remplus}).

{\tt Remark.} This system also satisfies certain conditions of Theorem~2 in Angeli et al. \cite{angelileenheersontag}, and of Theorem 2.2 in Donnell and Banaji \cite{donnellbanaji}. Either of these theorems can be used to infer that (with general kinetics) all initial conditions on any nontrivial stoichiometry class converge to an equilibrium which is positive and is the unique equilibrium on its stoichiometry class.
\end{example}

\begin{example}[Injectivity on stoichiometry classes, but not of the fully open extension]
\label{exweaker}
Consider he system $A+B\rightleftharpoons C,\,\, 2B\rightleftharpoons C+D,\,\,C\rightleftharpoons\emptyset$ with stoichiometric matrix $\Gamma$ and irreversible stoichiometric matrix $\overline{\Gamma}$ given by
\[
\Gamma = \left(\begin{array}{rrr}-1&0&0\\-1&-2&0\\1&1&-1\\0&1&0\end{array}\right)\,, \quad \overline{\Gamma} = \left(\begin{array}{rrrrrr}-1&1&0&0&0&0\\-1&1&-2&2&0&0\\1&-1&1&-1&-1&1\\0&0&1&-1&0&0\end{array}\right)\,.
\] 
{\tt Report.} (i) General kinetics: $\Gamma$ has rank $3$ and is $3$-SSD, but not SSD (namely, concordant, but not accordant). By Theorem~\ref{thmCRN1}(a) claim \apostt{IC1} holds. As PC1 also holds no nontrivial stoichiometry class includes more than one equilibrium. As stoichiometry classes are unbounded, we cannot actually infer the existence of equilibria on stoichiometry classes. By Theorem~\ref{thmCRN1}(b), the fully open system has multiple positive equilibria for some choice of power-law general kinetics. (ii) Mass action kinetics: as $\overline{\Gamma}$ fails to be WSD, by Theorem~\ref{thmWSD}(c), the system fails condition \apostt{IC2}, namely the fully open system fails to be injective for some choice of rate constants and inflows and outflows.

{\tt Remark.} Interestingly, if the reaction $C\rightleftharpoons\emptyset$ is omitted, then the conclusion about injectivity no longer holds. However, the system $A+B\rightleftharpoons C,\,\, 2B\rightleftharpoons C+D$ is of some interest in its own right: (i) As this is a simply reversible system whose irreversible stoichiometric matrix fails to be $2$-strongly WSD, by Theorem~\ref{thmWSD}, the CRN with mass action kinetics fails condition IC1 for some choice of rate constants. This does not however imply multiple positive equilibria: it is a weakly reversible, deficiency zero network with stoichiometric subspace of dimension $2$; so, with mass action kinetics, each nontrivial stoichiometry class includes exactly one positive equilibrium which attracts all positive initial conditions on its stoichiometry class \cite{panteapersistence}; (ii) $A+B\rightleftharpoons C,\,\, 2B\rightleftharpoons C+D$ defines a monotone dynamical system on each stoichiometry class for general kinetics (Corollary A.7 in \cite{banajidynsys}) and, via Theorem~0.2.2 in \cite{halsmith}, admits no nontrivial attracting periodic orbits.
\end{example}

\begin{example}[Injectivity claims with mass action kinetics only]
\label{exWSDandnotSSD}
Consider the system $A+B\rightleftharpoons C,\,\, 2A+B\rightleftharpoons D$ with stoichiometric matrix $\Gamma$ and irreversible stoichiometric matrix $\overline{\Gamma}$ given by:
\[
\Gamma = \left(\begin{array}{rr}-1 & -2\\-1 & -1\\1 & 0\\0 & 1 \end{array}\right)\,, \quad \overline{\Gamma} = \left(\begin{array}{rrrr}-1 & 1 & -2 & 2\\-1 & 1 & -1 & 1\\1 & -1 & 0 & 0\\0 & 0 & 1 & -1 \end{array}\right)
\]
{\tt Report.} (i) General kinetics. $\Gamma$ has rank $2$, but is neither SSD, nor $2$-SSD (namely, neither accordant nor concordant) and so, by Theorem~\ref{thmCRN1}(a), the system has multiple positive equilibria on a stoichiometry class for some choice of power-law general kinetics and, by Theorem~\ref{thmCRN1}(b), the fully open system has multiple positive equilibria for some choice of power-law general kinetics. (ii) Mass action kinetics. $\overline{\Gamma}$ is both WSD and $2$-strongly WSD, and by Theorem~\ref{thmWSD} both claims IC1a and \apostt{IC2} hold. The fully open system has no more than one equilibrium on $\mathbb{R}^4_{\geq 0}$. Further, PC2 and BC1 hold, so in fact (with mass action kinetics), each nontrivial stoichiometry class includes a unique equilibrium, which is positive. In this example, the assumption of mass action significantly strengthens conclusions for both the CRN and its fully open extension.

{\tt Remark.} This system satisfies certain conditions of Theorem~2 in Angeli et al. \cite{angelileenheersontag} and consequently (with general kinetics) almost all positive initial conditions converge to the set of equilibria (the Lebesgue measure of the set of possibly non-convergent initial conditions is zero). From above, with mass action kinetics, this ``set of equilibria'' intersects each nontrivial stoichiometry class in a unique point. We thus get generic convergence to a unique equilibrium on nontrivial stoichiometry classes for mass action kinetics, without using deficiency theory. 
\end{example}

The next example is only a slight variant on Example~\ref{exWSDandnotSSD}, where some inflows and outflows have been added, but gives different conclusions, illustrating that care is needed in analysing even simple networks.

\begin{example}[Stronger injectivity claims with mass action kinetics] 
\label{exvariant}
Consider the system $A+B\rightleftharpoons C,\,\, 2A+B\rightleftharpoons D,\,\, B\rightleftharpoons \emptyset,\,\, D\rightleftharpoons \emptyset$ with stoichiometric matrix $\Gamma$ and irreversible stoichiometric matrix $\overline{\Gamma}$ given by:
\[
\Gamma = \left(\begin{array}{rrrr}-1 & -2 & 0 & 0\\-1 & -1 & -1 & 0\\1 & 0 & 0 & 0\\0 & 1 & 0 & -1\end{array}\right)\,, \quad \overline{\Gamma} = \left(\begin{array}{rrrrrrrr}-1 & 1 & -2 & 2 & 0 & 0 & 0 & 0\\-1 & 1 & -1 & 1 & -1 & 1 & 0 & 0\\1 & -1 & 0 & 0 & 0 & 0 & 0 & 0\\0 & 0 & 1 & -1 & 0 & 0 & -1 & 1\end{array}\right)\,.
\]
{\tt Report.} (i) General kinetics. $\mathrm{rank}\,\Gamma = 4$ so the only stoichiometry class is $\mathbb{R}^4_{\geq 0}$. $\Gamma$ is $4$-SSD but not SSD, so by Theorem~\ref{thmCRN1}, claim \apostt{IC1} holds. As PC1 also holds, the $\mathbb{R}^4_{\geq 0}$ includes no more than one equilibrium. By Theorem~\ref{thmCRN1}(b), the fully open system has multiple positive equilibria for some choice of power-law general kinetics. Thus, even though the CRN and its fully open extension both have the same stoichiometry class (namely the whole of $\mathbb{R}^4_{\geq 0}$), the conclusions are quite different. (ii) Mass action kinetics. $\overline{\Gamma}$ is both WSD and $4$-strongly WSD (the CRN is both semiaccordant and semiconcordant), so by Theorem~\ref{thmWSD} both claims IC1a and \apostt{IC2} hold. In this example, the assumption of mass action significantly strengthens conclusions for the fully open extension.

{\tt Remark.} This example and the previous one illustrate rather interesting behaviour: adding some, but not all, inflows and outflows to the CRN in Example \ref{exWSDandnotSSD} led to the loss of multistationarity on positive stoichiometry classes, while adding the remaining outflows led to its return. Note that the addition of some inflows and outflows caused a change in the stoichiometric subspace, and this behaviour is thus consistent with the results in Joshi and Shiu \cite{joshishiu}. In fact, this is a weakly reversible deficiency zero network \cite{feinberg} and so, with mass action kinetics, each nontrivial stoichiometry class has exactly one equilibrium, which is positive, and is locally asymptotically stable relative to its stoichiometry class. As the system is in fact complex-balanced \cite{hornjackson} and persistent (since PC1 holds), we can infer that the unique positive equilibrium on each nontrivial stoichiometry class in fact attracts the whole of its stoichiometry class \cite{siegelmaclean}.
\end{example}

\begin{example}[Claims via deficiency theory only]
\label{exdef1}
Consider the system $A\rightleftharpoons 2B,\,\, A\rightleftharpoons 2C,\,\, A\rightleftharpoons B+C$ with stoichiometric matrix $\Gamma$ and irreversible stoichiometric matrix $\overline{\Gamma}$ given by:
\[
\Gamma = \left(\begin{array}{rrr}-1 & -1 & -1\\2 & 0 & 1\\0 & 2 & 1\end{array}\right)\,, \quad \overline{\Gamma} = \left(\begin{array}{rrrrrr}-1&1&-1&1&-1&1\\2&-2&0&0&1&-1\\0&0&2&-2&1&-1\end{array}\right)\,.
\]
{\tt Report.} (i) General kinetics. $\mathrm{rank}\,\Gamma = 2$ and $\Gamma$ is neither $2$-SSD nor SSD (the CRN is neither concordant nor accordant). By Theorem~\ref{thmCRN1}(a), the system has multiple positive equilibria on a stoichiometry class for some choice of power-law general kinetics, and by Theorem~\ref{thmCRN1}(b) the fully open system has multiple positive equilibria for some choice of power-law general kinetics. (ii) Mass action kinetics. $\overline{\Gamma}$ is neither WSD nor $2$-WSD. By Theorem~\ref{thmWSD}(c), the system fails condition \apostt{IC2}, namely the fully open system fails to be injective for some choice of rate constants and inflows and outflows. By Theorem~\ref{thmWSD}, the CRN with mass action kinetics fails condition IC1 for some choice of rate constants. This does not however imply multiple positive equilibria: as a weakly reversible network satisfying the conditions of the deficiency one theorem \cite{feinberg}, it has precisely one positive equilibrium on each nontrivial stoichiometry class (for all choices of rate constants). As stoichiometry classes are bounded and PC1 also holds we can in fact say that with mass action kinetics the CRN has precisely one equilibrium on each stoichiometry class, and this equilibrium is positive provided the stoichiometry class is not $\{0\}$.
\end{example}

\subsection{Examples of CRNs which are not simply reversible}
${}$

\begin{example}[The strongest possible claims III]
\label{exfutile}
The following network is often termed the ``futile cycle'' (\cite{angelileenheersontag} for example): $A+B\rightleftharpoons C\rightarrow A+D,\,\,E+D\rightleftharpoons F\rightarrow E+B$. The stoichiometric matrix $\Gamma$ and DSR graph $G$ are shown:

\begin{center}
\begin{tikzpicture}[domain=0:4,scale=0.45]
\node at (-11,4.5) {$\Gamma = \left(\begin{array}{rrrr}-1&1&0&0\\-1&0&0&1\\1&-1&0&0\\0&1&-1&0\\0&0&-1&1\\0&0&1&-1\end{array}\right)$};
\node at (-1,4.5) {$G_\Gamma = $};
\node at (1,1.5) {$B$};
\fill (1,5) circle (4pt);
\fill (5,1.5) circle (4pt);
\node at (1,7.5) {$C$};
\fill (4,7.5) circle (4pt);
\node at (8,7.5) {$D$};
\node at (4,5) {$A$};
\node at (5,4) {$E$};
\node at (8,1.5) {$F$};
\fill (8,4) circle (4pt);
\draw[<-, thick] (1.5,1.5) -- (4.6,1.5);
\draw[-, thick, dashed] (5.4,1.5) -- (7.3,1.5);
\draw[-, thick, dashed] (5.5,4) -- (7.6,4);

\draw[-, thick, dashed] (1,2.1) -- (1,4.6);
\draw[-, thick] (1, 5.4) -- (1,7);
\draw[<-, thick] (4, 5.5) -- (4,7.1);

\draw[-, thick, dashed] (1.7,7.5) -- (3.6,7.5);
\draw[->, thick] (4.4,7.5) -- (7.4,7.5);

\draw[-, thick, dashed] (1.4,5) -- (3.5,5);
\draw[-, thick] (8,2.1) -- (8,3.6);
\draw[-, thick, dashed] (8, 4.4) -- (8,6.9);
\draw[->, thick] (5,1.9) -- (5,3.55);
\end{tikzpicture}
\end{center}

 {\tt Report.} General kinetics. $G$ satisfies Condition~($*$), and so is concordant and accordant: \apostt{IC1} and \apostt{IC2} hold by Theorem~\ref{thmnoninjgen}. As the CRN is not weakly reversible, PC2 is not automatic from concordance, but PC2 can be computed to hold. As BC1 also holds, each nontrivial stoichiometry class contains a unique equilibrium, which is positive (Remark~\ref{remplus}).

{\tt Remark.} This system also satisfies certain conditions of Theorem~2 in Angeli et al. \cite{angelileenheersontag}, and of Theorem 2.2 in Donnell and Banaji \cite{donnellbanaji}. Either of these theorems can be used to infer that (with general kinetics) all initial conditions on any nontrivial stoichiometry class converge to an equilibrium which is positive and is the unique equilibrium on its stoichiometry class. 
\end{example}

\begin{example}[The strongest possible claims IV]
\label{exWR}
$A+B\rightarrow B+C,\,\,B+C \rightarrow D,\,\,D\rightarrow A+B$. The stoichiometric matrix $\Gamma$, $-Dv^t$, and the DSR graph $G= G_{\Gamma, -Dv}$ are shown:

\begin{tikzpicture}[domain=0:4,scale=0.45]
\node at (-16,4.5) {$\Gamma = \left(\begin{array}{rrr}-1&0&1\\0&-1&1\\1&-1&0\\0&1&-1\end{array}\right)$};
\node at (-7,4.5) {$-Dv^t = \left(\begin{array}{ccc}-&0&0\\-&-&0\\0&-&0\\0&0&-\end{array}\right)$};
\node at (-0.5,4.5) {$G = $};
\node at (1,7) {$A$};
\node at (7,7) {$C$};
\node at (4,4) {$B$};
\node at (4,2) {$D$};
\fill (4,7) circle (4pt);
\fill (1,4) circle (4pt);
\fill (7,4) circle (4pt);

\draw[-, dashed,thick] (1.5,7) -- (3.5,7);
\draw[->, thick] (4.5,7) -- (6.5,7);
\draw[->, thick] (1,4.5) -- (1,6.5);
\draw[->, dashed,thick] (4,4.5) -- (4,6.5);
\draw[-, dashed,thick] (7,4.5) -- (7,6.5);
\draw[->, thick] (1.5,4) -- (3.5,4);
\draw[-, dashed,thick] (4.5,4) -- (6.5,4);
\draw[-, dashed,thick] (1.5,3.7) -- (3.5,2.3);
\draw[<-, thick] (4.5,2.3) -- (6.5,3.7);

\node at (3.5,5.5) {${\scriptstyle \infty}$};
\end{tikzpicture}\\

{\tt Report.} General kinetics. Although the DSR graph fails Condition~($*$), the system is accordant (namely $\Gamma \Bumpeq -Dv^t$), demonstrating that Condition~($*$) is sufficient, but not necessary for accordance. As the system is weakly reversible, accordance implies concordance (Corollary~\ref{corstructdiscord}(iii)). Thus \apostt{IC1} and \apostt{IC2} hold by Theorem~\ref{thmnoninjgen}. Further, PC2 holds automatically as the system is weakly reversible (Remark~\ref{concordtopersist}). As BC1 also holds, each nontrivial stoichiometry class contains a unique equilibrium, which is positive (Remark~\ref{remplus}). 

{\tt Remark.} This example demonstrates that CRNs which are not simple (namely have species occurring on both sides of some reaction) may be accordant and concordant, and hence very well behaved.
\end{example}

The following five examples are all of CRNs which admit no positive equilibria in the sense of Definition \ref{defposeq}. However they nevertheless illustrate various interesting points about injectivity and multistationarity in CRNs.

\begin{example}[Well-behaved on stoichiometry classes and with outflows]
\label{exoutflow}
$A\rightarrow B$, $B+C\rightleftharpoons D$, $2C+A \rightleftharpoons E$. The stoichiometric matrix $\Gamma$ and $-Dv^t$ for this system are:
\[
\Gamma = \left(\begin{array}{rrr}-1 & 0 & -1\\1 & -1 & 0\\0 & -1 & -2\\0 & 1 & 0\\0 & 0 & 1 \end{array}\right)\,, \quad -Dv^t = \left(\begin{array}{ccc}- & 0 & -\\0 & - & 0\\0 & - & -\\0 & + & 0\\0 & 0 & + \end{array}\right)
\]
{\tt Report.} General kinetics: $\Gamma$ has rank $3$, $\Gamma$ and $-Dv^t$ are compatible and $3$-strongly compatible, namely the CRN is accordant and concordant, and \apostt{IC1} and \apostt{IC2} follow by Theorem~\ref{thmnoninjgen}. Stoichiometry classes are bounded, but the system admits no positive equilibria, so stoichiometry classes contain equilibria, but these are all boundary equilibria.  

{\tt Remark.} In this example (and several others to follow), as the CRN admits no positive equilibria, it is conclusion \apostt{IC2} telling us that the fully open system is injective which is likely to be of greatest interest. It is interesting to note that the DSR graph of this CRN satisfies the graph-theoretic condition for concordance in Theorem~2.1 of Shinar and Feinberg \cite{shinarfeinbergconcord2}, although it fails Condition~($*$) in Banaji and Craciun \cite{banajicraciun2}.
\end{example}

\begin{example}[Well-behaved with outflows, but not on stoichiometry classes]
\label{exinjnotmulistat}
Consider the system of two irreversible reactions $A+D\rightarrow B+D,\,\, 2A+D\rightarrow C+D$. The stoichiometric matrix $\Gamma$ and $-Dv^t$ for this system are:
\[
\Gamma = \left(\begin{array}{rr}-1 & -2\\1 & 0\\0 & 1\\0 & 0 \end{array}\right)\,, \quad -Dv^t = \left(\begin{array}{cc}- & -\\0 & 0\\0 & 0\\- & - \end{array}\right)\,.
\]
{\tt Report.} (i) General kinetics. $\Gamma$ has rank $2$ and $\Gamma$ and $-Dv^t$ are compatible but not $2$-strongly compatible, namely the CRN is accordant, but not concordant. By Theorem~\ref{thmnoninjgen}, \apostt{IC2} holds, namely the fully open system forbids multiple equilibria, but the CRN fails IC1 for some choice of power-law general kinetics (Theorem~\ref{thmnoninjgen}). (ii) Mass action kinetics. The CRN fails condition IC1 for some choice of mass action kinetics, namely the vector field is noninjective on the relative interior of some nontrivial stoichiometry class. Again, this clearly does not imply multiple positive equilibria on a stoichiometry class. 

{\tt Remark.} This is an example of a CRN where accordance does not imply concordance as the system is structurally discordant, namely $\Gamma \twost -Dv^t = 0$ for all $Dv$ in the rate pattern (Corollary~\ref{corstructdiscord}(i)). Equivalently, $\mathrm{det}_\Gamma(\Gamma Dv) = 0$ everywhere. 
\end{example}

\begin{example}[Well behaved with mass action but not more generally]
$A+B\rightleftharpoons C,\,\, 2A+2B\rightarrow B + D$. This system has irreversible stoichiometric matrix $\Gamma$, $-Dv^t$, and ${-\Gamma_l}$ as follows:
\[
\Gamma = \left(\begin{array}{rrr}-1 & 1& -2\\-1 &1& -1\\1 & -1 & 0\\0 & 0 & 1 \end{array}\right)\,, \,\, -Dv^t = \left(\begin{array}{ccc}- & 0 & -\\- & 0 & -\\0 & - & 0\\0 & 0 & 0 \end{array}\right)\,, \,\, {-\Gamma_l} = \left(\begin{array}{rrr}-1 & 0& -2\\-1 & 0 &-2\\0 & -1 & 0\\0 & 0 & 0 \end{array}\right)\,.
\]
{\tt Report.} (i) General kinetics. $r = \mathrm{rank}\,\Gamma = 2$, and the system is neither accordant nor concordant, namely none of the following hold: $\Gamma \Bumpeq -Dv^t$, $\Gamma \rst -Dv^t > 0$ or $\Gamma \rst -Dv^t < 0$. By Theorem~\ref{thmnoninjgen}(a) there exists a choice of power-law general kinetics such that the system fails condition IC1. Note however that the system admits no positive equilibria, and hence we cannot claim the existence of multiple positive equilibria on a stoichiometry class for any kinetics. By Theorem~\ref{thmnoninjgen}(b) the fully open system has multiple positive equilibria for some choice of power-law general kinetics. (ii) Mass action kinetics. The CRN is semiaccordant and semiconcordant (namely, $\Gamma \Bumpeq {-\Gamma_l}$ and $\Gamma \rst {-\Gamma_l} > 0$) and so, by Theorem~\ref{gen_powlaw_CRN}(a)~and~(b), IC1a and \apostt{IC2} hold: with mass action kinetics, the CRN is injective on positive stoichiometry classes, and its fully open extension is also injective. Consequently, the CRN forbids multiple positive equilibria on a stoichiometry class, and its fully open extension forbids multiple positive equilibria.

{\tt Remark.} As the system is semiaccordant and semiconcordant, it is normal (Corollary~\ref{cornormal}). This is an example of a normal CRN which is not weakly reversible.
\end{example}

The next system is the same as the previous one, but with the first reaction now irreversible. We see that this change has weakened the claims we are able to make. 

\begin{example}[Setting some reactions to be irreversible can weaken conclusions]
$A+B\rightarrow C,\,\, 2A+2B\rightarrow B + D$. This system has stoichiometric matrix $\Gamma$, $-Dv^t$, and ${-\Gamma_l}$ as follows:
\[
\Gamma = \left(\begin{array}{rr}-1 & -2\\-1 & -1\\1 & 0\\0 & 1 \end{array}\right)\,, \quad -Dv^t = \left(\begin{array}{cc}- & -\\- & -\\0 & 0\\0 & 0 \end{array}\right)\,, \quad {-\Gamma_l} = \left(\begin{array}{rr}-1 & -2\\-1 & -2\\0 & 0\\0 & 0 \end{array}\right)\,.
\]
{\tt Report.} (i) General kinetics. As in the previous example, $r = \mathrm{rank}\,\Gamma = 2$ and the system is neither accordant nor concordant and so, by Theorem~\ref{thmnoninjgen}(b), the fully open system has multiple positive equilibria for some choice of power-law general kinetics. It fails condition IC1 for some choice of power-law general kinetics, but does not admit positive equilibria, so this does not translate into multiple positive equilibria. (ii) Mass action kinetics. The system is semiaccordant ($\Gamma \Bumpeq {-\Gamma_l}$) and so, by Theorem~\ref{gen_powlaw_CRN}(b), \apostt{IC2} holds -- with mass action the fully open system forbids multiple positive equilibria. As neither the system is not semiconcordant (neither of $\Gamma \rst {-\Gamma_l} > 0$ nor $\Gamma \rst {-\Gamma_l} < 0$ holds), Theorem~\ref{gen_powlaw_CRN}(a) tells us that the CRN fails condition IC1 for some choice of rate constants. 

{\tt Remark.} Clearly this CRN fails to be normal (Definition~\ref{defconcord}) as $\Gamma \rst \Gamma_l = 0$ (whenever $\Gamma_l$ has lower rank that $\Gamma$ the failure to be normal is immediate). It is however not structurally discordant (Definition~\ref{defconcord}), illustrating that normal CRNs are a strict subset of those which are not structurally discordant.
\end{example}

\begin{example}[An autocatalytic system]
\label{exautocat}
Consider the simple, autocatalytic system $A\rightarrow B \rightarrow 2A$. Here the stoichiometric matrix $\Gamma$ and $-Dv^t$ are:
\[
\Gamma = \left(\begin{array}{rr}-1 & 2\\1 & -1 \end{array}\right)\,, \quad -Dv^t = \left(\begin{array}{cc}- & 0\\0 & - \end{array}\right)\,.
\]
{\tt Report.} (i) General kinetics. $r = \mathrm{rank}\,\Gamma = 2$, $\Gamma \rst -Dv^t < 0$ (the system is concordant), and PC1 holds, so by Lemma~\ref{thmCRNgen} and Remark~\ref{remgenkin}, claim \apostt{IC1} holds. The system does not however admit any equilibria other than the trivial one. As the system is not accordant, by Theorem~\ref{thmnoninjgen}(b), the fully open system has multiple positive equilibria for some choice of power-law general kinetics. (ii) Mass action kinetics: as $\Gamma$ fails to be WSD, by Theorem~\ref{thmWSD}(c), the system fails condition \apostt{IC2}, namely the fully open system fails to be injective for some choice of rate constants and inflows and outflows.

\end{example}

\section{Concluding remarks}

Results and examples have been presented illustrating a variety of claims about injectivity and multistationarity which can be made about a chemical reaction network, with either mass action or general kinetics, or other related classes of kinetics, primarily using various matrix-related tests. While graph-theoretic approaches have been mentioned only in passing, the practical significance of these approaches becomes particularly important for large systems. Where Condition~($*$) in Appendix~\ref{appstar} implies compatibility of a pair of matrices, and hence accordance of a CRN, an important task for the future is to develop efficient DSR graph conditions for $r$-strong compatibility of a pair of matrices, and hence concordance of a CRN. 

Of the many claims in this paper, we highlight the remarkable parallels between injectivity results for general kinetics and for mass action. For example, given a CRN $\mathcal{R}$ with irreversible stoichiometric matrix $\Gamma$ and corresponding left stoichiometric matrix $\Gamma_l$, and its fully open extension $\mathcal{R}_o$, we have the following parallels:
\begin{enumerate}[align=left,leftmargin=*]
\item {\bf Injectivity on stoichiometry classes}. {\bf Concordance}, namely $\Gamma$-nonsing\-ularity of the qualitative class $\mathcal{Q}(\Gamma_l)$, is equivalent to injectivity of $\mathcal{R}$ in the sense of \apostt{IC1} under the assumption of general kinetics. {\bf Semiconcordance},  namely $\Gamma$-nonsingularity of the semiclass $\mathcal{Q}'(\Gamma_l)$, is equivalent to injectivity of $\mathcal{R}$ in the sense of \apostt{IC1} under the assumption of mass action kinetics.
\item {\bf Injectivity of the fully open system}. {\bf Accordance}, namely $\Gamma \Bumpeq \mathcal{Q}(-\Gamma_l)$, is equivalent to injectivity of $\mathcal{R}_o$ on the nonnegative orthant (i.e., $\mathcal{R}$ satisfies \apostt{IC2}) under the assumption of general kinetics. {\bf Semiaccordance}, namely $\Gamma \Bumpeq \mathcal{Q}'(-\Gamma_l)$, is equivalent to injectivity of $\mathcal{R}_o$ on the nonnegative orthant (i.e., $\mathcal{R}$ satisfies \apostt{IC2}) under the assumption of mass action kinetics.
\item {\bf Nondegeneracy conditions}. Accordance implies concordance if and only if $\mathcal{R}$ is {\bf not structurally discordant}, namely $\mathcal{Q}(\Gamma_l)$ is not $\Gamma$-singular. Semiaccordance implies semiconcordance if and only if $\mathcal{R}$ is {\bf normal}, namely $\mathcal{Q}'(\Gamma_l)$ is not $\Gamma$-singular.
\end{enumerate}
Underlying these parallels is the fact that the derivatives of reaction rates of an irreversible CRN can explore qualitative classes (resp., semiclasses) on $\mathbb{R}^n_{\gg 0}$ under the assumption of general kinetics (resp., mass action). This combines with the fact that whether we assume general kinetics (or some closely related class), or fixed power-law kinetics (with mass action as a special case), {\em collective nonsingularity} of all the allowed systems, namely nonsingularity of each Jacobian, or its restriction to the stoichiometric subspace, is necessary and sufficient for {\em injectivity} of all the associated vector fields, or their restrictions to stoichiometry classes. On the other hand, in all cases there are elegant combinatorial conditions for collective nonsingularity: the ``compatibility'' conditions, relating signs of minors of matrices. 

It is noteworthy that for both general kinetics and mass action, the proof that (collective) nonsingularity implies injectivity on stoichiometry classes uses the fundamental theorem of calculus (Theorems~\ref{thminj}~and~\ref{gen_powlaw}): even though the set of allowed Jacobian matrices of a power-law system is in general not a convex set, by passing to logarithmic coordinates and back again, we can use an essentially convex approach to obtain conclusions about injectivity. The proofs in the other direction, that singularity of some CRN in the class implies the failure of injectivity for some CRN in the class, follow direct constructive approaches where we use the freedom to choose exponents, rate constants, etc., accorded by power-law functions.

Where showing that collective nonsingularity is {\bf equivalent} to collective injectivity is fairly straightforward for the classes of functions encountered here, inferring the existence of multiple equilibria from the failure of collective nonsingularity is trickier. Theorems~\ref{thmnoninjgen}(a),~\ref{thmnoninjgen}(b),~\ref{thmCRN1}(a),~\ref{thmCRN1}(b),~and~\ref{gen_powlaw_CRN}(d) provide conditions for multiple equilibria, but we sometimes need additional conditions beyond the failure of collective nonsingularity (the possibility of positive equilibria in Theorem~\ref{thmnoninjgen}(a); the strong incompatibility condition of Theorem~\ref{gen_powlaw_CRN}(d)). This brings us to the most obvious gap in this work: we do not provide sufficient conditions for the existence of multiple positive equilibria on a stoichiometry class for a non-fully open CRN with $M$-power-law kinetics. Certainly, failure of $M$-concordance is necessary, but may not be sufficient. The question of when, for instance, a mass action CRN which is not fully open is capable of multiple positive equilibria on a stoichiometry class has fundamentally algebraic aspects, beyond the techniques of this paper.  

Another more practical gap in this work involves incomplete algorithmic implementation of the results. For example, analysis of the examples in Section~\ref{secexamples} does not include the results of Theorem~\ref{gen_powlaw_CRN}(d), and so we never in the reports on examples claim definitively the existence of multiple positive equilibria of a fully open system with mass action kinetics: at the time of writing, a check for strong incompatibility of a pair of matrices (Definition~\ref{defstrongincompat}) has not been implemented in {\tt CoNtRol} \cite{control}.

Developments in chemical reaction network theory are occurring rapidly and the intersection of distinct branches of theory has the potential to provide increasingly strong claims about CRNs based on analysis of their structure alone. In the examples above we have already seen hints of this: for instance, in Example~\ref{exWSDandnotSSD} a generic quasiconvergence result based on monotonicity combines with a claim about the existence of a unique equilibrium to allow stronger conclusions. 

\section*{Acknowledgements}
MB's work on this paper was supported by EPSRC grant EP/J008826/1 ``Stability and order preservation in chemical reaction networks''. A large part of CP’s work was completed while at Imperial College London and was supported by Leverhulme grant F/07 058/BU ``Structural conditions for oscillation in chemical reaction networks''. CP was also partially supported by NSF DMS grant 1517577, ``Multistationarity and oscillations in biochemical reaction networks''. We are grateful to Pete Donnell for careful reading and helpful comments on several drafts of this paper, to Stefan M\"uller for useful discussions about necessary and sufficient conditions for injectivity and multistationarity, to Elisenda Feliu for making us aware of an error in an early draft of the paper, to Martin Feinberg for encouraging us to clarify the relationships between some of our results and results on concordance, and to the anonymous referees for helping us improve this paper in several ways.

\appendix

\section{The reduced determinant of a matrix-product}
\label{appreduced}

Let $0 \neq \Gamma \in \mathbb{R}^{n \times m}$ and $V \in \mathrm{R}^{m \times n}$. Let $r = \mathrm{rank}\,\Gamma$. Choose any basis for $\mathrm{im}\,\Gamma$ and write the vectors of this basis as the columns of a matrix $\Gamma_0$. Define $Q$ via $\Gamma = \Gamma_0 Q$, and choose (any) left inverse $\Gamma'$ to $\Gamma_0$ to get $\Gamma'\Gamma = Q$. So $\Gamma = \Gamma_0\Gamma'\Gamma$. 

Given $x \in \mathrm{im}\,\Gamma$, define new coordinates $y$ on $\mathrm{im}\,\Gamma$ via $x = \Gamma_0 y$. We have $\Gamma V \Gamma_0 y = \Gamma_0\Gamma'\Gamma V \Gamma_0 y = \Gamma_0 z$ where $z = \Gamma'\Gamma V \Gamma_0 y$. Thus we have a map $y \mapsto \Gamma'\Gamma V \Gamma_0 y \stackrel{\text{\tiny def}}{=} J_1 y$ which describes the action of $\Gamma V$ in the local coordinates on $\mathrm{im}\,\Gamma$. 

Suppose we choose a different basis for $\mathrm{im}\,\Gamma$, whose vectors are arranged as the columns of a matrix $\Gamma_1$ with left-inverse $\Gamma''$; in a similar way we derive a map $y \mapsto \Gamma''\Gamma V \Gamma_1 y \stackrel{\text{\tiny def}}{=} J_2 y$ which again describes the action of $\Gamma V$ on $\mathrm{im}\,\Gamma$ in the coordinates associated with $\Gamma_1$. It is easy to see that $J_1$ and $J_2$ are similar. Define $R$ via $\Gamma_1 = \Gamma_0 R$; clearly $R$ is (square and) nonsingular since both $\Gamma_0$ and $\Gamma_1$ define bases for $\mathrm{im}\,\Gamma$. Moreover $\Gamma_1R^{-1} = \Gamma_0$ and so $R^{-1} = \Gamma''\Gamma_0$. So
\[
J_2 = \Gamma''\Gamma V \Gamma_1 = \Gamma''\Gamma V \Gamma_0 R = \Gamma''(\Gamma_0 \Gamma'\Gamma) V \Gamma_0 R = R^{-1} \Gamma'\Gamma V \Gamma_0 R = R^{-1}J_1 R
\]
showing that $J_1$ and $J_2$ are similar. Thus although there is no unique choice of matrix describing the action of $\Gamma V$ on $\mathrm{im}\,\Gamma$, since all choices lead to similar matrices their determinant, characteristic polynomial, eigenvalues, etc. are uniquely defined. In particular, given a matrix product $\Gamma V$, we define $\mathrm{det}_{\Gamma}(\Gamma V) = \mathrm{det}(\Gamma'\Gamma V \Gamma_0)$ (with any choice of $\Gamma_0, \Gamma'$ as above) as the ``reduced determinant''of $\Gamma V$.

Clearly if $\mathrm{rank}\,\Gamma = n$, then $\mathrm{det}_{\Gamma}(\Gamma V) = \mathrm{det}(\Gamma V)$. We show that more generally $\mathrm{det}_{\Gamma}(\Gamma V) = \sum_{|\alpha| = r}(\Gamma V)[\alpha]$ where $r = \mathrm{rank}\,\Gamma$, namely $\mathrm{det}_{\Gamma}(\Gamma V)$ is, upto a sign-change, the coefficient of the term of order $n - r$ in the characteristic polynomial of $\Gamma V$. Choose $\alpha' \subseteq \mathbf{n}, \beta' \subseteq \mathbf{m}$ with $|\alpha'| = |\beta'| = r$ such that $\Gamma[\alpha'|\beta'] \neq 0$. Observe that by assumption, $\Gamma_0 \stackrel{\text{\tiny def}}{=} \Gamma(\mathbf{n}|\beta')$ has rank $r$. As above, let $\Gamma'$ be any left-inverse of $\Gamma_0$ so that $\Gamma = \Gamma_0\Gamma'\Gamma$ and define $J_1 = \Gamma' \Gamma V \Gamma_0$ as above. For each $\alpha, \beta$, we have:
\begin{equation}
\label{eqgab}
\Gamma[\alpha|\beta] = (\Gamma_0\Gamma'\Gamma)[\alpha|\beta] = \sum_{|\delta| = r}\Gamma_0[\alpha|\mathbf{r}]\,\Gamma'[\mathbf{r}|\delta]\Gamma[\delta|\beta]\,.
\end{equation}
So:
\begin{eqnarray*}
\sum_{|\alpha| = r}(\Gamma V)[\alpha] & = & \sum_{|\alpha| = |\beta| = r}\Gamma[\alpha|\beta]V[\beta|\alpha]\\
& = & \sum_{|\alpha| = |\beta| = |\delta| = r}\Gamma_0[\alpha|\mathbf{r}]\,\Gamma'[\mathbf{r}|\delta]\Gamma[\delta|\beta]V[\beta|\alpha] \qquad \mbox{(using (\ref{eqgab}))}\\
 & = & \sum_{|\delta| = r}\Gamma'[\mathbf{r}|\delta]\sum_{|\alpha| = |\beta| = r}\Gamma[\delta|\beta]V[\beta|\alpha]\Gamma_0[\alpha|\mathbf{r}]\\
& = & \sum_{|\delta| = r}\Gamma'[\mathbf{r}|\delta]\,(\Gamma V \Gamma_0)[\delta|\mathbf{r}]\\
& = & (\Gamma' \Gamma V \Gamma_0)[\mathbf{r}|\mathbf{r}] =  \mathrm{det}(J_1) = \mathrm{det}_{\Gamma}(\Gamma V)\,.
\end{eqnarray*}

\begin{lemma1}
\label{lemrank}
$\mathrm{det}_\Gamma(\Gamma V) \neq 0$ if and only if $\mathrm{rank}(\Gamma V \Gamma) = r$.
\end{lemma1}
\begin{proof}
Observe that (trivially) $\mathrm{rank}(\Gamma V \Gamma) \leq r$, and $\mathrm{rank}(\Gamma V \Gamma) < r$ if and only if there exists $0 \neq y \in \mathrm{im}\,\Gamma$ such that $\Gamma V y = 0$. On the other hand, choosing $\Gamma_0$ and $\Gamma'$ as above, $\mathrm{det}_\Gamma(\Gamma V) = 0$ if and only if there exists $z \neq 0$ such that $(\Gamma' \Gamma V \Gamma_0)z = 0$. 

Suppose $\mathrm{rank}(\Gamma V \Gamma) < r$, choose nonzero $y \in \mathrm{im}\,\Gamma$ such that $\Gamma V y = 0$, and write $y = \Gamma_0z$ ($z \neq 0$). Immediately, $\Gamma' \Gamma V \Gamma_0z = 0$ so $\mathrm{det}_\Gamma(\Gamma V) = 0$.

Conversely, suppose $\mathrm{det}_\Gamma(\Gamma V) = 0$ and choose $z \neq 0$ such that $\Gamma' \Gamma V \Gamma_0z = 0$. This implies that $\Gamma V \Gamma_0z = 0$ since by definition $\mathrm{im}\,\Gamma \cap \mathrm{ker}\,\Gamma' = \{0\}$. But $0 \neq \Gamma_0z\in \mathrm{im}\,\Gamma$. So $\mathrm{rank}(\Gamma V \Gamma) < r$. 
\hfill
\end{proof}

\begin{lemma1}
\label{lemposdefred}
Let $\Gamma \in \mathbb{R}^{n \times m}$, $V \in \mathrm{R}^{m \times n}$, and $\Gamma V$ be positive definite on $\mathrm{im}\,\Gamma$ in the sense that $0 \neq z \in \mathrm{im}\,\Gamma \Rightarrow z^t\Gamma V z > 0$. Then $\mathrm{det}_\Gamma \Gamma V > 0$.
\end{lemma1}
\begin{proof}
Fix some basis for $\mathrm{im}\,\Gamma$ and, as in the preceding discussion, let $J$ be the matrix describing the action of $\Gamma V$ in this basis, so that $\mathrm{det}_\Gamma\Gamma V = \mathrm{det}J$. By Lemma~\ref{lemrank}, $z^t\Gamma V z > 0$ for all $0 \neq z \in \mathrm{im}\,\Gamma$ implies that $\mathrm{det}_\Gamma\Gamma V \neq 0$, namely $\mathrm{det}J \neq 0$. Consider the spectrum of $J$, namely the list of eigenvalues of $\Gamma V$ associated with $\mathrm{im}\,\Gamma$, say $(\lambda_1, \lambda_2, \ldots, \lambda_r)$. As $\mathrm{det}J \neq 0$, none of these eigenvalues is $0$. If one, say $\lambda_1$, is real and negative, then choosing a corresponding eigenvector $z \in \mathrm{im}\,\Gamma$, we get the contradiction $0 < z^t\Gamma V z = \lambda_1 |z|^2 < 0$. As (i) any real eigenvalues of $J$ are positive and (ii) any nonreal eigenvalues of $J$ come in complex conjugate pairs, the product $\lambda_1\lambda_2\cdots\lambda_r > 0$, namely $\mathrm{det}J = \mathrm{det}_\Gamma\Gamma V > 0$. 
\hfill \end{proof}

\section{General kinetics, weak general kinetics, positive general kinetics}
\label{appkin}
Given a CRN, let $\mathcal{I}_{j, l}$ be the set of indices of species occurring on the left of reaction $j$ and $\mathcal{I}_{j, r}$ be the set of indices of the species occurring on the right of reaction $j$. The following assumptions about the function $v(x)$ in (\ref{reacsys}), apply in the case of ``general kinetics'', where $v$ is assumed to be $C^1$ on $\mathbb{R}^n_{\geq 0}$. They are collectively termed ``Assumption K'':
\begin{enumerate}[align=left,leftmargin=*]
\item[(A)] If reaction $j$ is irreversible then
\begin{enumerate}[align=left,leftmargin=*]
\item[(i)]
$v_j \geq 0$ with $v_j = 0$ if and only if $x_i = 0$ for some $i \in \mathcal{I}_{j, l}$.
\item[(ii)]
$\partial v_j/\partial x_i \geq 0$ for each $i \in \mathcal{I}_{j,l}$. If $x_i > 0$ for all $i \in \mathcal{I}_{j,l}$, then $\partial v_j/\partial x_i > 0$ for each $i \in \mathcal{I}_{j,l}$.
\end{enumerate}
\item[(B)] If reaction $j$ is reversible then
\begin{enumerate}[align=left,leftmargin=*]
\item[(i)]
If $x_{i} = 0$ for some $i \in \mathcal{I}_{j,l}$ (resp., for some $i \in \mathcal{I}_{j,r}$) then $v_j \leq 0$ (resp., $v_j \geq 0$).
\item[(ii)]
If $x_{i} = 0$ for some $i \in \mathcal{I}_{j,l}$ (resp., for some $i \in \mathcal{I}_{j,r}$), then $v_j < 0$ (resp., $v_j > 0$) if and only if $x_{i'} > 0$ for each $i' \in \mathcal{I}_{j,r}$ (resp., for each $i' \in \mathcal{I}_{j,l}$).
\item[(iii)]
If $k \in \mathcal{I}_{j,l}, k \not \in \mathcal{I}_{j,r}$, and $x_i > 0$ for all $i \in \mathcal{I}_{j,l}$ then $\partial v_j(x)/\partial x_k > 0$ (resp., if $k \in \mathcal{I}_{j,r}, k \not \in \mathcal{I}_{j,l}$, and $x_i > 0$ for all $i \in \mathcal{I}_{j,r}$ then $\partial v_j(x)/\partial x_k < 0$). 
\end{enumerate}
\end{enumerate}
These assumptions are similar to the assumptions made in \cite{banajimierczynski}, although there the case where species may occur on both sides of the same reaction was excluded. The reader may confirm that the assumptions here imply the ones in \cite{banajimierczynski} in that case. Note that the assumptions for a reversible reaction are presented for completeness, but can actually be inferred from the assumptions for irreversible reactions.

For ``weak general kinetics'' (Definition~\ref{defgenkin}), where we assume that $v$ is defined and continuous on $\mathbb{R}^n_{\geq 0}$, and $C^1$ on $\mathbb{R}^n_{\gg 0}$, we replace A(ii) with ``$\partial v_j/\partial x_i > 0$ on $\mathbb{R}^n_{\gg 0}$ for each $i \in \mathcal{I}_{j,l}$.''

For ``positive general kinetics'' (Definition~\ref{defgenkin}), where we assume only that $v$ is defined and $C^1$ on $\mathbb{R}^n_{\gg 0}$, Assumption K reduces to Assumption K$_\mathrm{o}$ which consists of:
\begin{enumerate}[align=left,leftmargin=*]
\item[(A$_\mathrm{o}$)] If reaction $j$ is irreversible then (i) $v_j > 0$, (ii) $\partial v_j/\partial x_i > 0$ for each $i \in \mathcal{I}_{j,l}$.
\item[(B$_\mathrm{o}$)] If reaction $j$ is reversible then:  $k \in \mathcal{I}_{j,l}, k \not \in \mathcal{I}_{j,r}$, and $x_i > 0$ for all $i \in \mathcal{I}_{j,l}$ then $\partial v_j(x)/\partial x_k > 0$ (resp., if $k \in \mathcal{I}_{j,r}, k \not \in \mathcal{I}_{j,l}$, and $x_i > 0$ for all $i \in \mathcal{I}_{j,r}$ then $\partial v_j(x)/\partial x_k < 0$). 
\end{enumerate}

The following lemma is a straightforward result. Versions of it have appeared in previous literature, with slightly different technical assumptions (see for example Appendix I of Feinberg \cite{feinberg}).

\begin{lemma1}
\label{lemma:invariance}
Let the system (\ref{reacsys}) satisfy Assumption K. Then for any $x \in \mathbb{R}^{n}_{\geq 0}$, any $j$, and any $i$ such that $x_i = 0$ there holds $\dot x_i = \Gamma_{ij} v_j(x) \ge 0$. Consequently, for such a system $\mathbb{R}^{n}_{\geq 0}$ is forward invariant.
\end{lemma1}
\begin{proof}
The result in fact requires only Assumptions (A)(i) and (B)(i). Let $C_i$ refer to the $i$th species and $R_j$ to the $j$th reaction. Let $x \in \mathbb{R}^{n}_{\geq 0}$ be such that $x_i = 0$.
\begin{itemize}[align=left,leftmargin=*]
\item If $C_i$ does not participate in $R_j$ then $\Gamma_{ij} = 0$, and so $\Gamma_{ij} v_j(x) = 0$.
\item Suppose $R_j$ is irreversible. If $C_i$ occurs on the left of $R_j$ then, by (A)(i), $v_j(x) = 0$, and consequently $\Gamma_{ij} v_j(x) = 0$. If $C_i$ occurs only on the right of $R_j$ then $\Gamma_{ij} > 0$ and, by (A)(i), $v_j \geq 0$, so $\Gamma_{ij} v_j(x) \ge 0$.  
\item Suppose $R_j$ is reversible. If $C_i$ occurs on both sides of $R_j$ then,  by (B)(i), $v_j(x) = 0$. If $C_i$ occurs only on the left of $R_j$, then $\Gamma_{ij} < 0$ and, by B(i), $v_j(x) \leq 0$; consequently $\Gamma_{ij} v_j(x) \ge 0$. If $C_i$ occurs only on the right of $R_j$, then $\Gamma_{ij} > 0$ and, by B(i), $v_j(x) \geq 0$; again $\Gamma_{ij} v_j(x) \ge 0$.
\end{itemize}
Thus $x_i = 0$ implies $\dot x_i = \sum_j \Gamma_{ij} v_j(x) \geq 0$, and so $\mathbb{R}^{n}_{\geq 0}$ is forward invariant. \hfill
\end{proof}

\section{Concordance}
\label{appconcord}
Consider a CRN with irreversible stoichiometric matrix $\Gamma \in \mathbb{R}^{n \times m}$. Let $\Gamma_l \geq 0$ be the left stoichiometric matrix so that, by Assumption K$_o$, $Dv(x) \in \mathcal{Q}(\Gamma_l^t)$ for $x \gg 0$. Let $r = \mathrm{rank}\,\Gamma$. We show that concordance of a system of irreversible reactions as defined by Shinar and Feinberg \cite{shinarfeinbergconcord1} is equivalent to the condition $\Gamma \rst \mathcal{Q}(\Gamma_l) > 0$ or $\Gamma \rst \mathcal{Q}(\Gamma_l) < 0$, which is the form taken by concordance as defined here for such a system (Lemma~\ref{lemconcordbasic}). First observe that
\[
\begin{array}{rcl} && \Gamma \rst \mathcal{Q}(\Gamma_l) > 0\,\,\mbox{ or } \,\, \Gamma \rst \mathcal{Q}(\Gamma_l) < 0\\
&\Leftrightarrow  & [\Gamma \rst M > 0 \,\,\mbox{ or } \,\, \Gamma \rst M < 0] \,\,\forall M \in \mathcal{Q}(\Gamma_l) \quad \mbox{(as $\mathcal{Q}(\Gamma_l^t)$ is path connected)}\\
& \Leftrightarrow & \mathrm{rank}(\Gamma V \Gamma) = r \quad \forall V \in \mathcal{Q}(\Gamma_l^t) \quad \mbox{(Lemma~\ref{lemmain0})}\\
& \Leftrightarrow &[\Gamma V\Gamma y = 0 \,\,\Leftrightarrow\,\, \Gamma y = 0 \quad \forall V \in \mathcal{Q}(\Gamma_l^t)] \qquad (*)\\
\end{array}
\]
We now show that $(*)$ is equivalent to concordance. Consider the negation of $(*)$, namely, ``there exists $y \in \mathbb{R}^m$ and $V \in \mathcal{Q}(\Gamma_l^t)$ such that $\sigma \stackrel{\text{\tiny def}}{=} \Gamma y \neq 0$, but $\Gamma V\sigma = 0$''. In other words, ``there is some $V\in \mathcal{Q}(\Gamma_l^t)$ which can map a nonzero vector in the image of $\Gamma$ to the kernel of $\Gamma$''. 

(i) {\bf If $(*)$ fails, the system is discordant}. Suppose ($*$) fails so there exist $V \in \mathcal{Q}(\Gamma_l^t)$, $\alpha \in \mathrm{ker}\,\Gamma$, and $0 \neq \sigma \in \mathrm{im}\,\Gamma$ such that $\alpha = V\sigma$. Note that $\alpha_i = \sum_jV_{ij}\sigma_j$ and that $V_{ij} \in \mathcal{Q}((\Gamma_l)_{ji})$. Fix $i$. Since $V \geq 0$ and $ V \in \mathcal{Q}(\Gamma_l^t)$:
\begin{enumerate}[align=left,leftmargin=*]
\item If $\alpha_i = (V\sigma)_i = 0$, then either $(\Gamma_l)_{ji} > 0 \Rightarrow \sigma_j = 0$, or there exist $j_1 \neq j_2$ such that $\sigma_{j_1}\sigma_{j_2} < 0$ and $(\Gamma_l)_{j_1i}, (\Gamma_l)_{j_2i} > 0$. 
\item If $\alpha_i = (V\sigma)_i > 0$, there exists $j$ s.t. $(\Gamma_l)_{ji}\sigma_j > 0$; if $\alpha_i = (V\sigma)_i < 0$, there exists $j$ s.t. $(\Gamma_l)_{ji}\sigma_j < 0$. 
\end{enumerate}
The existence of $\alpha \in \mathrm{ker}\,\Gamma$, $0 \neq \sigma \in \mathrm{im}\,\Gamma$ satisfying (1) and (2) above means (by definition) that the system is discordant. 

(ii) {\bf If the system is discordant, then $(*)$ fails.} Suppose the system is discordant, namely there is a pair $0 \neq \sigma \in \mathrm{im}\,\Gamma$, $\alpha \in \mathrm{ker}\,\Gamma$ such that
\begin{enumerate}[align=left,leftmargin=*]
\item Whenever $\alpha_i = 0$, then either $(\Gamma_l)_{ji} > 0 \Rightarrow \sigma_j = 0$, or there exist $j_1 \neq j_2$ such that $\sigma_{j_1}\sigma_{j_2} < 0$ and $(\Gamma_l)_{j_1i}, (\Gamma_l)_{j_2i} > 0$. [discordance condition ii.]
\item Whenever $\alpha_i > 0$, there exists $j$ s.t. $(\Gamma_l)_{ji}\sigma_j > 0$; whenever $\alpha_i < 0$, there exists $j$ s.t. $(\Gamma_l)_{ji}\sigma_j < 0$. [discordance condition i.]
\end{enumerate}
Then there exists $V \in \mathcal{Q}(\Gamma_l^t)$ such that $V\sigma = \alpha$ and hence $\Gamma y \neq 0$, but $\Gamma V\Gamma y = 0$, namely $(*)$ fails. This follows straightforwardly:
\begin{itemize}[align=left,leftmargin=*]
\item If $\alpha_i = 0$ and $(\Gamma_l)_{ji} > 0 \Rightarrow \sigma_j = 0$, then trivially $0 = \alpha_i = (V\sigma)_i$ for any $V \in \mathcal{Q}(\Gamma_l^t)$;
\item If $\alpha_i = 0$ and there exist $j_1 \neq j_2$ such that $\sigma_{j_1}\sigma_{j_2} < 0$ and $(\Gamma_l)_{j_1i}, (\Gamma_l)_{j_2i} > 0$, then we can clearly choose $V^i \in \mathcal{Q}((\Gamma_l)_i)$ (the $i$th row of $V$ in qualitative class of $i$th column of $\Gamma_l$) such that $V^i\sigma = 0 = \alpha_i$. 
\item If $\alpha_i > 0$ and there exists $j$ s.t. $(\Gamma_l)_{ji}\sigma_j > 0$, , then we can clearly choose $V^i \in \mathcal{Q}((\Gamma_l)_i)$ such that $V^i\sigma = \alpha_i$.
\item If $\alpha_i < 0$ and there exists $j$ s.t. $(\Gamma_l)_{ji}\sigma_j < 0$, , then we can clearly choose $V^i \in \mathcal{Q}((\Gamma_l)_i)$ such that $V^i\sigma = \alpha_i$.
\end{itemize}

\section{Additional information}
\label{appadditional}

{\bf Claim BC1 (bounded stoichiometry classes).} The following is well known (Appendix 1 of Horn and Jackson \cite{hornjackson} for example):
\begin{lemma1}
\label{lembdclass}
Stoichiometry classes of a CRN with stoichiometric matrix $\Gamma \in \mathbb{R}^{n \times m}$ are bounded if and only if there exists $0 \ll p \in \mathrm{ker}\,\Gamma^t$.
\end{lemma1}
\begin{proof}
Recall that stoichiometry classes are simply the intersections of cosets of $\mathrm{im}\,\Gamma$ with $\mathbb{R}^n_{\geq 0}$, and that $\mathrm{ker}\,\Gamma^t$ is the orthogonal complement of $\mathrm{im}\,\Gamma$. The proof is now easily inferred from Theorem~4 in Ben-Israel \cite{benisrael}. 
\hfill \end{proof}

Note that if BC1 holds then all nonempty stoichiometry classes are bounded polyhedra. As they are also forward invariant under the local semiflow generated by (\ref{reacsys}) (Lemma~\ref{lemma:invariance} in Appendix~\ref{appkin}), they contain equilibria as a consequence of the Brouwer fixed point theorem (\cite{spanier} for example).

{\bf Claim PC0 (nonexistence of positive equilibria).} In many situations a very simple result on the nonexistence of positive equilibria can be applied. The following lemma is an amalgamation of easy and well-known facts (see for example Section 5.3. of Feinberg \cite{feinberg} for related results).

\begin{lemma1}
\label{lemnoposeq}
Let $\Gamma\in \mathbb{R}^{n \times m}$ be the stoichiometric matrix of a CRN, $\overline{\Gamma} \in \mathbb{R}^{n \times m'}$ the corresponding irreversible stoichiometric matrix. (i) If $\mathrm{ker}\,\overline{\Gamma} \cap \mathbb{R}^n_{\gg 0} = \emptyset$ then, with general kinetics on $\mathbb{R}^n_{\gg 0}$, or any power-law kinetics, the CRN has no positive equilibria. (ii) If $\mathrm{ker}\,\overline{\Gamma} \cap \mathbb{R}^n_{\gg 0} \not = \emptyset$, then the CRN with mass action kinetics has a positive equilibrium for some choice of rate constants.
\end{lemma1}
\begin{proof}
(i) We prove the contrapositive. Let $v\colon \mathbb{R}^n \to \mathbb{R}^m$ be the rate function of the original CRN (not necessarily irreversible). Without loss of generality let reactions $1,\ldots,r$ be reversible, and reactions $r+1, \ldots, m$ irreversible. Suppose the CRN has an equilibrium $x_{eq} \gg 0$ and let $w = v(x_{eq})$ so that $\Gamma w=0$. Assuming only that reaction rates of irreversible reactions on $\mathbb{R}^n_{\gg 0}$ are positive (certainly true for positive general kinetics, or any power-law kinetics), $w_{r+1}, \ldots, w_m$ are all positive. Let 
\[
\overline{\Gamma} = (\Gamma_1\,|\,-\!\Gamma_1\,|\,\cdots \,|\, \Gamma_r\,|\,-\!\Gamma_r\,|\,\Gamma_{r+1}\,|\,\Gamma_{r+2}\,|\,\cdots\,|\, \Gamma_m)
\]
be the irreversible stoichiometric matrix of the system. For $k = 1, \ldots, r$, define $w_{k+} = 1+\max\{w_{k},0\}$, $w_{k-} = 1-\min\{w_{k},0\}$, so that $w_{k+},w_{k-} > 0$ and $w_k = w_{k+} - w_{k-}$. Define
\[
\overline{w} = (w_{1+}, w_{1-}, \ldots, w_{r+}, w_{r-}, w_{r+1}, \ldots, w_m)^t\,.
\]
By construction $\overline{w} \gg 0$, and clearly $\overline{\Gamma}\overline{w} = \Gamma w = 0$, and thus $\mathrm{ker}\,\overline{\Gamma} \cap \mathbb{R}^n_{\gg 0} \not = \emptyset$. 

(ii) Let $0 \ll z \in \mathrm{ker}\,\overline{\Gamma}$. Define $x = \mathbf{1}$ and choose $E \in \mathcal{D}_m$ via $E_{ii} = z_i$. Then for any matrix of exponents $M \in \mathbb{R}^{m \times n}$ (including, in particular, $M = \Gamma_l^t$, where $\Gamma_l$ is the left stoichiometric matrix corresponding to $\overline{\Gamma}$), $\overline{\Gamma} E \exp(M \ln x) = \overline{\Gamma} E \mathbf{1} = \overline{\Gamma} z = 0$, and thus $x$ is a positive equilibrium of the system.
\hfill \end{proof}

\begin{remark}
A variety of conditions on a network with mass action kinetics are known to guarantee that it has a positive equilibrium for {\em all} choices of rate constants. However, $\mathrm{ker}\,\overline{\Gamma} \cap \mathbb{R}^n_{\gg 0} \not = \emptyset$ is not sufficient -- see Remark 5.3B in Feinberg \cite{feinberg}. 
\end{remark}

{\bf Claims PC1 and PC2 (persistence of solutions).} A {\em siphon} of a CRN is a nonempty subset $\Sigma$ of the chemical species such that (under the assumption of general kinetics) if all species in $\Sigma$ are absent, then no reaction is able to produce any species of $\Sigma$. Corresponding to siphon $\Sigma$ is a subset $F_\Sigma$ of $\partial \mathbb{R}^n_{\geq 0}$ where all concentrations of species from $\Sigma$ are zero, and all others nonzero, termed a ``siphon face'' in \cite{donnellbanaji}; it is easy to show (for a CRN with general kinetics, and in fact under considerably weaker assumptions) that all nonzero $\omega$-limit points of the system on $\partial \mathbb{R}^n_{\geq 0}$ must in fact lie on siphon faces (see Banaji and Mierczy\'nski \cite{banajimierczynski} for example). A siphon is termed ``critical''  if a nontrivial stoichiometry class intersects the corresponding siphon face. Note that in the literature siphons have also been called ``semilocking sets'', and critical siphons have also been called ``relevant'', while non-critical siphons have been termed ``stoichiometrically infeasible'', and ``structurally nonemptiable''. 

{\bf PC2} occurs if the system has no critical siphons, in which case no positive initial condition can have an $\omega$-limit point on $\partial \mathbb{R}^n_{\geq 0}$. The absence of critical siphons, implying ``structural persistence'' of the CRN, occurs if for each siphon $\Sigma$, there exists a nonnegative and nonzero vector in $\mathrm{ker}\,\Gamma^t$ orthogonal to $F_\Sigma$, or in the terminology of \cite{angelipetrinet}, each siphon contains the ``support of a P-semiflow''. This condition for the absence of critical siphons is also stated without proof in Remark 6.1.E of Feinberg \cite{feinberg}. Verification of this condition involves checking whether certain linear equalities and inequalities are satisfiable and is easily implemented computationally. Details and an example of the calculations are provided in Donnell and Banaji \cite{donnellbanaji} while the computations are implemented in {\tt CoNtRol} \cite{control}. Where the calculations implemented in \cite{control} involve linear programming, an algebraic algorithm for verifying the absence of critical siphons is  given in Shiu and Sturmfels \cite{shiusturmfels}. Note also that if a network is concordant and weakly reversible then the absence of critical siphons is automatic by Theorem~6.3 in Shinar and Feinberg \cite{shinarfeinbergconcord1}, reproved by elementary means in the next appendix.

{\bf PC1} is satisfied if the system has no siphons, other than possibly the set of all species, corresponding to siphon face $\{0\}$. If the set of all species is a siphon, then it is non-critical (an indirect consequence in this case is that stoichiometry classes must be bounded as the stoichiometric subspace has trivial intersection with the nonnegative orthant). Since a CRN satisfying PC1 either has no siphons, or a single non-critical siphon at the origin, it also satisfies PC2.

\section{Elementary proof that a concordant, weakly reversible CRN has no critical siphons}
\label{appWRconcord}
Shinar and Feinberg \cite{shinarfeinbergconcord1} provide two proofs of the claim reproved by direct means in this appendix. One uses unpublished results of Deng et al. \cite{dengWR}, who prove that nontrivial stoichiometry classes of weakly reversible CRNs with mass action kinetics contain positive equilibria. The second proof relies on classical results of Horn and Jackson on the existence and uniqueness of complex balanced equilibria for CRNs with mass action kinetics \cite{hornjackson}. As both hypotheses and conclusions of the theorem are fundamentally linear algebraic/combinatorial, and have little to do with chemical kinetics at all (as remarked by the authors themselves), we provide a proof which does not rely on results for mass action systems. We require the following fact:
\begin{lemma1}[Theorem 7.2 in Craciun and Feinberg \cite{Craciun.2010ac}]
\label{WRnormal}
Every weakly reversible CRN is normal.
\end{lemma1}
\begin{proof}
Consider a CRN with irreversible stoichiometric matrix $\Gamma \in \mathbb{R}^{n \times m}$ and left stoichiometric matrix $\Gamma_l$ and whose complex digraph $G$ is weakly reversible. Assume that $G$ has no loops: this assumption is without loss of generality as $\mathrm{det}_\Gamma(\Gamma V) = \sum_{|\alpha| = |\beta| =\mathrm{rank}\,\Gamma}\Gamma[\alpha|\beta]V[\beta|\alpha]$ is unchanged (for arbitrary $V$ of appropriate dimension) by the addition of a column of zeros to $\Gamma$.

To prove the lemma we will show that $\Gamma D\Gamma_l^t$ is negative definite on $\mathrm{Im}\,\Gamma$ for some $D \in \mathcal{D}_m$, namely $z^t\Gamma D \Gamma_l^tz < 0$ for all $0 \neq z \in \mathrm{Im}\,\Gamma$. Then $\mathrm{rank}\,\Gamma D\Gamma_l^t\Gamma =\mathrm{rank}\,\Gamma$, implying normality since $D\Gamma_l^t \in \mathcal{Q}'(\Gamma_l^t)$. Specifying some order on the complexes, let $Y$ be the matrix of complexes, and $\Theta$ the (signed) incidence matrix of $G$ (namely $\Theta_{ik} = -1, \Theta_{jk}=1$ iff edge $k$ is $(i,j)$), so that $\Gamma = Y\Theta$. Define $\Theta_l = \Theta_{-}$ and $\Theta_r = \Theta_{+}$, so that $\Gamma_l = Y\Theta_l$. For arbitrary $Y$, we aim to construct $D$ such that 
\[
z^tY\Theta D \Theta_l^tY^tz < 0\quad \mbox{or equivalently} \quad -z^tY(\Theta D \Theta_l^t + \Theta_lD\Theta^t)Y^tz > 0 
\]
for all $0 \neq z \in \mathrm{Im}\,Y\Theta$. Each column of $\Theta_l$ (resp., $\theta_r$) has exactly one nonzero entry and so its nonzero rows form an orthogonal basis for $\mathrm{Im}\,\Theta_l^t$. Define:
\[
L(G) \stackrel{\text{\tiny def}}{=} (-\Theta\Theta_l^t)_{ij} \!=\! \sum_k\Theta_{ik}(-\Theta_l)_{jk} = \left\{\begin{array}{rl}q&\mbox{if $i=j$ and vertex $i$ has $q$ out-edges}\\-r& \mbox{if there are $r$ edges $(i,j)$ in $G$}\\0 &\mbox{otherwise}\end{array}\right.
\]
We see that $L(G)$ is the transpose of a digraph analogue of the Laplacian matrix of a graph (where the diagonal entries count the outdegree of a vertex, and the off-diagonal $ij$ entry counts the number of edges $(i,j)$). 

Let $C$ be a cycle in $G$ and consider the subgraph $G_C$ which has all the vertices of $G$ but edges only from $C$. The incidence matrix $\Theta_C$ of $G_C$ is simply $\Theta$ with all entries corresponding to edges (namely columns of $\Theta$) not in $C$ set to zero. We can confirm easily that $L(G_C) \stackrel{\text{\tiny def}}{=} \Theta_C\Theta_{C,l}^t = \Theta_C\Theta_l^t$. Consider the matrix $\overline \Theta \stackrel{\text{\tiny def}}{=} \sum_{i}\Theta_{C_i}$ where the sum is over all cycles. If edge $j$ occurs in $k_j \in \mathbb{N}_0$ cycles, then column $j$ of $\Theta$ appears as column $j$ of $\Theta_{C_i}$ for $k_j$ values of $i$, and so column $j$ of $\overline \Theta$ is simply $k_j$ times column $j$ of $\Theta$. If $G$ is weakly reversible then each edge occurs in some cycle, namely $k_j \geq 1$ for each $j$, and consequently $\overline \Theta = \Theta D$ where $D = \mathrm{diag}(k_1, k_2, \ldots) \in \mathcal{D}_m$.

Now fix a cycle $C$ and observe that $\Theta_C\Theta_C^t = -(\Theta_C\Theta_{C,l}^t + \Theta_{C,l}\Theta_C^t)$: this is a straightforward computation, where we need only note that $\Theta_{C,r}\Theta^t_{C,r} = \Theta_{C,l}\Theta^t_{C,l}$ as each nonzero row of $\Theta_C$ contains exactly one $+1$ and one $-1$, and all rows of $\Theta_{C,l}$ (resp., $\Theta_{C,r}$) are orthogonal. Thus
\[
-(\Theta D \Theta_l^t + \Theta_l D \Theta^t) = \sum_i-(\Theta_{C_i}\Theta_l^t + \Theta_l\Theta_{C_i}^t) = \sum_i\Theta_{C_i}\Theta_{C_i}^t = \tilde\Theta\Tilde\Theta^t
\]
where $\tilde\Theta = [\Theta_{C_1}|\Theta_{C_2}|\cdots|\Theta_{C_k}]$, and hence, for any $z \in \mathbb{R}^n$, 
\[
-z^tY(\Theta D \Theta_l^t + \Theta_lD\Theta^t)Y^tz = z^tY\tilde\Theta{\tilde\Theta}^tY^tz  = |{\tilde\Theta}^tY^tz|^2 \geq 0\,.
\]
with $z^tY\tilde\Theta{\tilde\Theta}^tY^tz=0$ iff ${\tilde\Theta}^tY^tz = 0$. On the other hand, $\mathrm{ker}\,\tilde\Theta^t = \mathrm{ker}\,\Theta^t$ as $\tilde\Theta^t$ includes precisely the rows of $\Theta_t$, perhaps reordered and with some repetition, and no others. So ${\tilde\Theta}^tY^tz = 0$ iff $\Theta^tY^tz = 0$. Fixing $0 \neq z \in \mathrm{Im}\,Y\Theta$, $\Theta^tY^tz \neq 0$, and consequently $-z^tY(\Theta D \Theta_l^t + \Theta_lD\Theta^t)Y^tz > 0$ as desired. \hfill \end{proof}

It is useful to note the following immediate corollary:
\begin{cor}
\label{corWRconcord}
Given a weakly reversible CRN with irreversible stoichiometric matrix $\Gamma$ and corresponding left-stoichiometric matric $\Gamma_l$, there exists positive diagonal matrix $D$ such that $\mathrm{det}_\Gamma(-\Gamma D \Gamma_l^t) > 0$. 
\end{cor}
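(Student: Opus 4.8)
The plan is to recognise that this corollary is essentially a repackaging of what the proof of Lemma~\ref{WRnormal} has already established, combined with Lemma~\ref{lemposdefred} from Appendix~\ref{appreduced}. Indeed, the proof of Lemma~\ref{WRnormal} does more than verify normality: it constructs an explicit $D \in \mathcal{D}_m$ --- namely $D = \mathrm{diag}(k_1, k_2, \ldots)$, where $k_j \geq 1$ counts the number of cycles of the complex digraph in which edge $j$ lies (each $k_j \geq 1$ precisely because weak reversibility forces every edge into a cycle, which is also what guarantees $D \in \mathcal{D}_m$) --- and shows that for this choice
\[
z^t \Gamma D \Gamma_l^t z < 0 \quad \text{for all } 0 \neq z \in \mathrm{Im}\,\Gamma.
\]
In other words $\Gamma D \Gamma_l^t$ is negative definite on $\mathrm{im}\,\Gamma$.

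First I would isolate this definiteness conclusion from the body of the preceding proof, noting that it is exactly the content of the final inequality $-z^t Y(\Theta D \Theta_l^t + \Theta_l D \Theta^t)Y^t z > 0$ obtained there for nonzero $z \in \mathrm{Im}\,Y\Theta = \mathrm{Im}\,\Gamma$, together with the identities $\Gamma = Y\Theta$ and $\Gamma_l = Y\Theta_l$. Negative definiteness of $\Gamma D \Gamma_l^t$ on $\mathrm{im}\,\Gamma$ is plainly equivalent to positive definiteness of $-\Gamma D \Gamma_l^t$ on $\mathrm{im}\,\Gamma$, in the sense required by Lemma~\ref{lemposdefred}.

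Next I would apply Lemma~\ref{lemposdefred} with $V = -D\Gamma_l^t \in \mathbb{R}^{m \times n}$, so that $\Gamma V = -\Gamma D \Gamma_l^t$. Since this product is positive definite on $\mathrm{im}\,\Gamma$, the lemma yields at once $\mathrm{det}_\Gamma(-\Gamma D \Gamma_l^t) = \mathrm{det}_\Gamma(\Gamma V) > 0$, which is precisely the asserted inequality.

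Because all the substantive work resides in Lemma~\ref{WRnormal}, there is no genuine obstacle to overcome here; the corollary is immediate. The only points requiring care are bookkeeping ones: confirming that the constructed $D$ genuinely lies in $\mathcal{D}_m$ (guaranteed by weak reversibility, as noted above), and confirming that the definiteness established in that proof is taken with respect to $\mathrm{im}\,\Gamma$ rather than any larger subspace, so that the hypothesis of Lemma~\ref{lemposdefred} is met verbatim.
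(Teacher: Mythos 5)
Your proposal is correct and follows exactly the paper's own proof: extract from the proof of Lemma~\ref{WRnormal} the positive definiteness of $-\Gamma D \Gamma_l^t$ on $\mathrm{im}\,\Gamma$ for the constructed $D \in \mathcal{D}_m$, then apply Lemma~\ref{lemposdefred} to conclude $\mathrm{det}_\Gamma(-\Gamma D \Gamma_l^t) > 0$. The additional bookkeeping you spell out (why $D$ is positive diagonal, and that definiteness is taken on $\mathrm{im}\,\Gamma$) is implicit in the paper's two-line argument.
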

\begin{proof}
By the proof of Lemma~\ref{WRnormal}, there exists positive diagonal $D$ such that $-\Gamma D \Gamma_l^t$ is positive definite on $\mathrm{im}\,\Gamma$. By Lemma~\ref{lemposdefred}, this implies that $\mathrm{det}_\Gamma(-\Gamma D \Gamma_l^t) > 0$. 
\hfill \end{proof}

\begin{thm}[Theorem~6.3 in Shinar and Feinberg \cite{shinarfeinbergconcord1}]
\label{structpersistthm}
A weakly reversible CRN with a critical siphon is discordant. 
\end{thm}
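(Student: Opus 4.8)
The plan is to establish discordance directly, by producing the combinatorial certificate for discordance isolated in Appendix~\ref{appconcord}. Writing $\Gamma$ for the irreversible stoichiometric matrix and $\Gamma_l \geq 0$ for the left stoichiometric matrix, that certificate asserts that the network is discordant exactly when there exist $0 \neq \sigma \in \mathrm{im}\,\Gamma$ (indexed by species) and $\alpha \in \mathrm{ker}\,\Gamma$ (indexed by reactions) such that for each reaction $i$: if $\alpha_i > 0$ then some reactant $j$ of $i$ has $\sigma_j > 0$; if $\alpha_i < 0$ then some reactant $j$ of $i$ has $\sigma_j < 0$; and if $\alpha_i = 0$ then either every reactant $j$ of $i$ has $\sigma_j = 0$, or $i$ has two reactants of opposite $\sigma$-sign. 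First I would convert the hypothesis that $\Sigma$ is a \emph{critical} siphon into the existence of a suitable $\sigma$. By definition $\Sigma$ is critical precisely when a nontrivial stoichiometry class meets the siphon face $F_\Sigma$; by a standard theorem of the alternative applied to $\mathrm{im}\,\Gamma$ and $F_\Sigma$ (in the spirit of Lemma~\ref{lembdclass}), this is equivalent to the absence of any nonzero nonnegative $p \in \mathrm{ker}\,\Gamma^t$ supported on $\Sigma$, and hence to the existence of $\sigma \in \mathrm{im}\,\Gamma$ with $\sigma_j > 0$ for all $j \in \Sigma$. I fix such a $\sigma$, perturbing it within the relatively open cone $\{\sigma \in \mathrm{im}\,\Gamma : \sigma|_\Sigma \gg 0\}$ so that $\sigma_j \neq 0$ for every species $j$.

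The second ingredient is a structural consequence of weak reversibility combined with the siphon property. As in the proof of Lemma~\ref{WRnormal}, write $\Gamma = Y\Theta$ with $Y$ the matrix of complexes and $\Theta$ the incidence matrix of the complex digraph $G$, and call a complex \emph{$\Sigma$-touching} if its support meets $\Sigma$. The siphon property says that no reaction whose product complex touches $\Sigma$ may have a source complex disjoint from $\Sigma$; equivalently, $G$ has no arc running from a non-touching complex to a touching one. Since each linkage class of a weakly reversible network is strongly connected, this forces every linkage class to be \emph{entirely} touching or \emph{entirely} non-touching. Consequently the reactions split into those lying in touching linkage classes (each of which both consumes and produces a species of $\Sigma$) and those lying in non-touching classes, the latter involving only species of $\Sigma^c$ and forming a weakly reversible subnetwork $\mathcal{R}'$ on $\Sigma^c$.

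Next I would assemble $\alpha \in \mathrm{ker}\,\Gamma$ from circulations. A strongly connected digraph carries a strictly positive circulation, so the union $G_\Sigma$ of the touching linkage classes carries a vector $k^\Sigma \gg 0$ supported on the touching reactions with $\Theta k^\Sigma = 0$, whence $\Gamma k^\Sigma = 0$. Letting $\alpha$ agree with $k^\Sigma$ on the touching reactions gives $\alpha_i > 0$ there, and since each touching reaction has a reactant in $\Sigma$, where $\sigma > 0$, the discordance condition is satisfied on every touching reaction. It then remains to assign $\alpha$ on the reactions of $\mathcal{R}'$ so that the full $\alpha$ still lies in $\mathrm{ker}\,\Gamma$ and the discordance conditions hold there; because those reactions live entirely in $\Sigma^c$, this becomes a question about $\mathcal{R}'$ and the restriction $\sigma|_{\Sigma^c}$ alone.

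The hard part is exactly this last step. Taking $\alpha = 0$ on $\mathcal{R}'$ is not always admissible: a non-touching reaction whose reactants all carry one fixed $\sigma$-sign cannot be given $\alpha_i = 0$, and a monomolecular reaction can never display two reactants of opposite sign. The freedom I would exploit is that $\sigma$ may be altered by any element of $\mathrm{im}\,\Gamma'$ (the stoichiometric subspace of $\mathcal{R}'$, sitting inside $\mathbb{R}^{\Sigma^c}$) without disturbing $\sigma|_\Sigma$, and that $\mathcal{R}'$, being weakly reversible, carries circulations of either sign on each of its components. The aim is to choose the $\mathrm{im}\,\Gamma'$-component of $\sigma|_{\Sigma^c}$ together with a sign-compatible circulation on $\mathcal{R}'$ simultaneously, so that on each non-touching reaction the sign of $\alpha_i$ matches the sign of $\sigma$ on one of its reactants; extended by $k^\Sigma$ on the touching part, the resulting $\alpha$ lies in $\mathrm{ker}\,\Gamma$ and certifies discordance. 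Making this simultaneous choice rigorous is the crux: I expect to handle it either by a theorem of the alternative on $\mathcal{R}'$, or by induction on the number of linkage classes, peeling off $G_\Sigma$ and treating the non-touching part with the same machinery. Once the reactions disjoint from the siphon are reconciled, the remainder is routine bookkeeping with the compatibility/Cauchy--Binet dictionary already developed, and the conclusion follows from the discordance characterisation of Lemma~\ref{lemconcordbasic}.
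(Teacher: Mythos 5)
Your proposal follows the paper's own route for its first half: criticality of $\Sigma$ gives $\sigma \in \mathrm{im}\,\Gamma$ with $\sigma|_\Sigma \gg 0$; the siphon property plus weak reversibility forces every linkage class to consist entirely of complexes meeting $\Sigma$ or entirely of complexes disjoint from $\Sigma$; and a strictly positive circulation handles the reactions in the touching classes, since each such reaction has a reactant in $\Sigma$ where $\sigma > 0$. All of this is sound and is exactly how the paper proceeds. But the step you yourself flag as ``the crux'' --- assigning $\alpha$ on the non-touching weakly reversible subnetwork $\mathcal{R}'$ while adjusting $\sigma|_{\Sigma^c}$ --- is left as a hope (``I expect to handle it either by a theorem of the alternative \ldots or by induction on the number of linkage classes''), and this is precisely where the real content of the theorem lies. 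As you note, neither $\alpha = 0$ nor a single signed circulation works, and the simultaneous choice of $\alpha' \in \ker\Gamma'$ and a shift $\delta \in \mathrm{im}\,\Gamma'$ subject to per-reaction sign-matching is a nonconvex feasibility problem; it is not at all clear that a linear theorem of the alternative closes it.

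The paper fills this gap with normality of weakly reversible CRNs (Lemma~\ref{WRnormal}, proved by the Laplacian/cycle argument you partially quote): the subnetwork $\mathcal{R}'$, being weakly reversible, admits some $M^S \in \mathcal{Q}((\Gamma_l^S)^t)$ with $\mathrm{im}\,M^S\Gamma^S \oplus \ker\Gamma^S = \mathbb{R}^s$. One then decomposes $M^S\sigma^S = \alpha + \beta$ with $\alpha \in \mathrm{im}\,M^S\Gamma^S$, $\beta \in \ker\Gamma^S$, and corrects $\sigma^S$ by $y^{S,1} \in \mathrm{im}\,\Gamma^S$ with $M^Sy^{S,1} = -\alpha$, so that $M^S(\sigma^S + y^{S,1}) \in \ker\Gamma^S$. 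The sign conditions on the non-touching reactions are then \emph{automatic}, because the kernel element is produced as $M^S$ applied to the (adjusted) $\sigma$, and $M^S$ has the sign pattern of $(\Gamma_l^S)^t$: each $\alpha_i$ is a positive-coefficient combination of $\sigma_j$ over the reactants $j$ of reaction $i$. Your formulation, which tries to match the sign of an independently chosen circulation against the signs of $\sigma$ on reactants, obscures this mechanism and leaves the certificate unconstructed. (A further minor point: your preliminary perturbation making $\sigma_j \neq 0$ for \emph{every} species is not always achievable --- a species coordinate can vanish identically on $\mathrm{im}\,\Gamma$ --- and the paper's argument never needs it.)
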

\begin{proof}
Consider a weakly reversible CRN on $n$ species with a critical siphon $\emptyset \neq \Sigma \subseteq \mathbf{n}$. Let $\Gamma, \Gamma_l \in \mathbb{R}^{n \times m}$ be the irreversible stoichiometric matrix and left stoichiometric matrix of the system respectively, and $G$ its complex digraph. The strategy will be to find $0 \neq y' \in \mathrm{im}\,\Gamma$, and $M \in \mathcal{Q}(\Gamma_l^t)$, such that $\Gamma My' = 0$, thus showing that the system is discordant.  

Let $S = \mathbf{n}\backslash \Sigma$ be the set of $0 \leq k < n$ species not in the siphon. Define
\[
F = \{x \in \mathbb{R}^n\,|\, x_i > 0, i \in S\,\,\, \mbox{and}\,\,\, x_i = 0, i \not \in S\}.
\]
to be the corresponding face of $\mathbb{R}^n_{\geq 0}$. Order the species and reactions so that:
\[
\Gamma = \left[\begin{array}{cc}\left.\begin{array}{c}\Gamma^S\\\hline 0\end{array}\right| & \Gamma^0\end{array}\right], \quad \Gamma_l = \left[\begin{array}{cc}\left.\begin{array}{c}\Gamma_l^S\\\hline 0\end{array}\right| & \Gamma_l^0\end{array}\right]\,.
\]
Here the superscript $S$ refers to reactions involving species {\em only} from $S$, while the superscript $0$ refers to the remaining reactions. We allow $\Gamma^S$ and $\Gamma_l^S$ to be empty matrices in the case that $S$ is empty (corresponding to $\mathbf{n}$ being the critical siphon): all arguments remain valid in this case. Let $\Gamma_S$ have $s$ columns, where $0 \leq s < m$: whether or not $S$ is empty, the set of reactions supported on $S$ may be empty, but cannot be all of the reactions as $\Sigma$ is critical -- if $\Gamma^0$ were empty, then $\mathrm{im}\,\Gamma \subseteq \mathrm{span}\,F$, and $\Sigma$ would then clearly be noncritical. We refer to the reactions of $\Gamma^S$ as the $S$-reactions, and the others as the non-$S$-reactions. 

Observe that a non-$S$-reaction can share no complexes with an $S$-reaction: otherwise, as $\Sigma$ is a siphon, any shared complexes must figure only as product complexes for non-$S$-reactions, violating weak reversibility. Thus the $S$-reactions and non-$S$-reactions are made up from distinct sets of connected components of $G$, namely each forms a weakly reversible subsystem of the CRN (this is also observed in the original proof of Shinar and Feinberg \cite{shinarfeinbergconcord1}). Consequently, as any union of cycles in $G$ corresponds to a nonnegative vector in $\mathrm{ker}\,\Gamma$ with support precisely equal to the reactions in these cycles, there exists a strictly positive vector $q^0 \in \mathbb{R}^{m-s}$ in $\mathrm{ker}\,\Gamma^0$. 

Choose $p \in \mathbb{R}^n_{\gg 0}$ and a vector $y \in \mathrm{im}\,\Gamma$ such that $p - y \in F$. Geometrically, we begin at a point on the critical siphon face $F$ and move along its stoichiometry class into $\mathbb{R}^n_{\gg 0}$, possible by definition of a critical siphon: $y$ is the vector travelled. Let 
\[
y = \left[\begin{array}{c}y^S\\y^0\end{array}\right]
\]
where $y^S$ and $y^0$ have their natural meanings. By construction, entries in $y^0$ are all positive while the signs of entries in $y^S$ are unknown. In the case $k=0$, $y^S$ is empty, and $y$ is a strictly positive vector. 

We now consider the subsystem of $S$-reactions, assuming for the time being that it is nonempty, namely $s > 0$ (and hence certainly $k > 0$). As it is a weakly reversible system of reactions it is normal (Lemma~\ref{WRnormal}) and hence not structurally discordant, and so there exists {\em some} $M^S \in \mathcal{Q}((\Gamma_l^S)^t)$ such that $\mathrm{im}\,M^S\Gamma^S \oplus \mathrm{\ker}\,\Gamma^S = \mathbb{R}^s$. As a consequence, we can write $M^Sy^S = \alpha + \beta$ where $\alpha \in \mathrm{im}\,M^S\Gamma^S$ and $\beta \in \mathrm{ker}\,\Gamma^S$. Choosing $y^{S,1} \in \mathrm{im}\,\Gamma^S$ such that $M^Sy^{S,1} = -\alpha$, and hence $M^S(y^S + y^{S,1}) = \beta$, we see that $\Gamma^SM^S(y^S + y^{S,1}) = 0$. Now set
\[
y' = \left[\begin{array}{c}y^S + y^{S,1}\\y^0\end{array}\right] \quad \mbox{and write $M \in \mathcal{Q}((\Gamma_l)^t)$ as} \quad M = \left[\begin{array}{c}M^S\,\,|\,\, 0\\\hline M^0\end{array}\right].
\]
Choose the entries of $M^0$ (corresponding to non-$S$-reactions) as follows. For $i > s$, reaction $i$ must have a species on the left in $\Sigma$, for otherwise (since $\Sigma$ is a siphon) it would produce only species in $S$, namely there exists $j > k$ such that $(\Gamma_l)_{ji} > 0$. Since $y'_j > 0$, we can choose entries in row $i$ of $M \in \mathcal{Q}(\Gamma_l^t)$ such that $(My')_i = q^0_{i-s} > 0$, namely $\Gamma^0M^0y' = \Gamma^0q^0 = 0$. We now have
\begin{eqnarray*}
\Gamma M y' & = & \left[\begin{array}{cc}\left.\begin{array}{c}\Gamma^S\\\hline 0\end{array}\right| & \Gamma^0\end{array}\right]\,\left[\begin{array}{c}M^S\,\,|\,\, 0\\\hline M^0\end{array}\right]\left[\begin{array}{c}y^S + y^{S,1}\\y^0\end{array}\right]\\
& = & \left[\begin{array}{cc}\Gamma^SM^S&0\\0&0\end{array}\right]\left[\begin{array}{c}y^S + y^{S,1}\\y^0\end{array}\right] + \Gamma^0M^0\,\left[\begin{array}{c}y^S + y^{S,1}\\y^0\end{array}\right]\\
& = & \left[\begin{array}{c}\Gamma^SM^S(y^S + y^{S,1})\\0\end{array}\right] + \Gamma^0M^0y' = 0 + 0 = 0\,.
\end{eqnarray*}
As $0 \neq y' \in \mathrm{im}\,\Gamma$, this concludes the proof that the system is discordant in the case that the $S$-subsystem is nonempty. In case the $S$-subsystem is empty, $\Gamma = \Gamma^0$, $M = M^0$ and either: 
\begin{enumerate}[align=left,leftmargin=*]
\item[(i)] $k=0$ in which case $y = y^0$ (i.e., $y^S$ is empty). Then $\Gamma M y = \Gamma^0M^0y^0$; as above we can construct $M^0$ such that $M^0y^0 = q^0$, giving $\Gamma^0M^0y^0 = 0$. 
\item[(ii)] Otherwise $y^S$ is not empty, but we choose $y^{S,1} = 0$. We apply the construction above to get $M^0$ such that $M^0y = q^0$, so $\Gamma M y = \Gamma^0M^0y = \Gamma^0q^0 = 0$.
\end{enumerate}
This completes the proof.\hfill \end{proof}

\section{DSR graphs and Condition~($*$)}
\label{appstar}
We follow the constructions in Angeli et al. \cite{angelibanajipantea} based on those in Banaji and Craciun \cite{banajicraciun2} though with minor technical differences, the most important of which is that what is here termed $G_{A,B}$ was termed $G_{A, B^t}$ in \cite{banajicraciun2}. A great deal more explanation and justification for the construction of the DSR graph, and explanation of Condition~($*$), are given in \cite{banajicraciun2}.

{\bf DSR graph of a matrix pair.} Given $A \in \mathbb{R}^{n \times m}$, $B \in \mathbb{R}^{m \times n}$, construct a signed, labelled, bipartite, generalised graph $G_{A, B}$ on $n + m$ vertices as follows: beginning with $n$ vertices $X_1, \ldots, X_n$ and $m$ vertices $Y_1, \ldots, Y_m$, add the directed edge $Y_jX_i$ iff $A_{ij} \not = 0$, and give this edge the sign of $A_{ij}$ and label $|A_{ij}|$; add the directed edge $X_iY_j$ iff $B_{ji} \not = 0$, and give this edge the sign of $B_{ji}$ (and no label). If a pair of edges $X_iY_j$ and $Y_jX_i$ both exist and have the same sign, merge these into a single undirected edge with the label inherited from $X_iY_j$. Note that edges of $G_{A, B}$ may be directed from an $X$-vertex to a $Y$-vertex, or from a $Y$-vertex to an $X$-vertex, or undirected. Any edges which remain unlabelled at the end of the construction must be directed from $X$ vertex to $Y$ vertex and can be given the formal label $\infty$. We follow the convention that edge-labels of $1$ are omitted.

{\bf SR graph of a matrix.} $G_A$, the SR graph of a matrix $A$ is the DSR graph $G_{A, A^t}$. By construction, all edges in $G_{A, A^t}$ are undirected and have finite labels, namely the magnitudes of entries of $A$. $G_A$ can thus be seen as a representation of the matrix. 

{\bf DSR graph of a CRN.} Given a CRN $\mathcal{R}$ with stoichiometric matrix $\Gamma$ and rate vector $v(x)$ in (\ref{reacsys}), for each $x \in \mathbb{R}^n_{\gg 0}$, define $G_{\Gamma, -Dv(x)}$ to be the DSR graph of the matrix pair $\Gamma, -Dv(x)$. The DSR graph $G$ of $\mathcal{R}$ is a formal union of the DSR graphs $G_{\Gamma, -Dv(x)}$ as follows: all $G_{\Gamma, -Dv(x)}$ have the same vertex set and this is the vertex set of $G$; the edge set of $G$ is the union of edges of all the $G_{\Gamma, -Dv(x)}$, where two edges are considered to be equal if they have the same direction, sign and label. Assumption K$_o$ allows the DSR graph of {\em any} CRN to be constructed from knowledge of the reactions alone (see the DSR graphs shown in Section~\ref{secexamples}). Note, however, that the DSR graph of a CRN differs depending on whether a reversible reaction is treated as a single reaction or a pair of irreversible reactions, and the choice to treat a reversible reaction as a pair of irreversible reactions, or not, can affect whether Condition~($*$) below holds for the DSR graph of the CRN.

{\bf SR graph of a CRN.} The SR graph of a CRN is just the DSR graph with all direction removed from edges. 

{\bf Cycles in DSR graphs.} Consider a DSR graph $G$ some of whose edges may be directed: each edge $e \in E(G)$ has a sign $\pm 1$, and a numerical label $l(e) \in \mathbb{N} \cup \{\infty\}$. A cycle $c$ in a DSR graph $G$ is a path from some vertex to itself which repeats no other vertices, and which respects the orientation of any directed edges traversed. Its length $|c|$ is the number of edges (or vertices) in $c$, and the sign of $c$ is the product of the edge-signs in $c$. As $G$ is bipartite, any cycle $c$ has even length and we can define:
\[
P(c) = (-1)^{|c|/2}\mathrm{sign}(c).
\]
$c$ is termed an {\bf e-cycle} if $P(c) = 1$, and an {\bf o-cycle} otherwise. If $c = (e_1, e_2, \ldots, e_{2r})$, then $c$ is an {\bf s-cycle} if all edges in $c$ have finite labels and
\[
\prod_{i = 1}^{r}l(e_{2i-1}) = \prod_{i = 1}^{r}l(e_{2i}).
\]
Two oriented cycles in $G$ are compatibly oriented if each induces the same orientation on every edge in their intersection. Two cycles (possibly unoriented) in $G$ are compatibly oriented if there is an orientation for each so that this requirement is fulfilled. Two cycles of $G$ have {\bf S-to-R intersection} if they are compatibly oriented and each component of their intersection contains an odd number of edges (this is trivially fulfilled if their intersection includes no edges).

{\bf Condition~($*$).} A DSR graph $G$ satisfies Condition~($*$) if all its e-cycles are s-cycles, and no two e-cycles have S-to-R intersection.

Note that Condition~($*$) is sufficient but not necessary to ensure that a CRN is accordant, namely that $\Gamma \Bumpeq -V^t$ for each $V$ in the rate pattern of the CRN. Construction of the DSR graph of a CRN, and calculation of whether it satisfies Condition~($*$), are automated in {\tt CoNtRol} \cite{control}.

\bibliographystyle{siam}

\end{document}